\newcommand{\dbar}{\overline{\partial}}
\newcommand{\ddbar}{\sqrt{-1}\partial\dbar}
\newtheorem{theorem}{Theorem}[section]
\newtheorem{lemma}{Lemma}[section]
\newtheorem{definition}{Definition}[section]
\newtheorem{corollary}{Corollary}[section]
\begin{document}

\title{Degeneration of K\"ahler-Einstein manifolds of negative scalar curvature }

\author{Jian Song} 

\address{Department of Mathematics, Rutgers University, Piscataway, NJ 08854}

\email{jiansong@math.rutgers.edu}

\thanks{Research supported in
part by National Science Foundation grants DMS-1406124}

\begin{abstract} Let $\pi: \mathcal{X}^* \rightarrow B^*$ be an algebraic family  of compact K\"ahler manifolds of complex dimension $n$ with negative first Chern class over a punctured disc $B^*\in \mathbb{C}$. Let $g_t$ be the unique K\"ahler-Einstein metric on $\mathcal{X}_t= \pi^{-1}(t)$. We show that  as $t\rightarrow 0$, $(\mathcal{X}_t, g_t)$ converges in pointed Gromov-Hausdorff topology  to a unique finite disjoint union of complete metric length spaces $\coprod_{\alpha=1}^\mathcal{A} (Y_\alpha, d_\alpha)$ with $\sum_{\alpha=1}^\mathcal{A} \textnormal{Vol}(Y_\alpha, d_\alpha) = \textnormal{Vol}(\mathcal{X}_t, g_t)$. Each $(Y_\alpha, d_\alpha)$ is a smooth open K\"ahler-Einstein manifold of complex dimension $n$ outside its closed singular set of Hausdorff dimension no greater than $2n-4$.  Furthermore, $\coprod_{\alpha=1}^\mathcal{A} Y_\alpha$ is a  quasi-projective variety isomorphic to $\mathcal{X}_0 \setminus \textnormal{LCS}(\mathcal{X}_0)$, where $\mathcal{X}_0$ is a projective semi-log canonical model  and $\textnormal{LCS}(\mathcal{X}_0)$ is the non-log terminal locus of $\mathcal{X}_0$. This is the first step of our approach toward compactification of the analytic geometric moduli space of K\"ahler-Einstein manifolds of negative scalar curvature. 
\end{abstract}

\maketitle

{\footnotesize  \tableofcontents}


\section{Introduction}

 The geometric and algebraic moduli space of compact Einstein manifolds is a fundamental problem in both differential geometry and algebraic geometry. Any compact Riemann surface is equipped with a unique metric of constant curvature in its conformal class. Fixing a Riemann surface $M$ with genus greater than one,  the moduli space of complex structures on $M$ admits a unique Deligne-Mumford compactification \cite{DM} by adding Riemann surfaces with possible nodes on the boundary of the moduli. In particular, such Riemann surfaces admit a unique constant curvature metric with complete ends at the nodal points.  In higher dimensions, one considers the Einstein manifolds $(M, g)$ with $$Ric(g) = \lambda g, ~ \lambda= 1, 0, -1$$ instead of spaces of constant curvature. 

In dimension $4$, there are many deep results on the compactness of Einstein manifolds.  In fact, it is proved in \cite{N, An1, T1, An2} that any sequence of $(M_i, g_i)$ with uniform volume lower bound, diameter upper bound and $L^2$-curvature bound will converge to a compact Einstein orbifold, after passing to a subsequence. In the K\"ahler case,  the compactness result holds when $\lambda=0$ with uniform volume upper and lower bounds, and when $\lambda=1$ without any assumption. 

In higher dimension, very little is known about the compactness of the moduli of general Einstein manifolds. In the case of compact K\"ahler-Einstein manifolds, there have been many exciting breakthroughs. The partial $C^0$-estimates, proposed by Tian and established in \cite{T1, DS1},  are the fundamental tool to study analytic and geometric degeneration of K\"ahler-Einstein metrics. Any sequence of K\"ahler-Einstein manifolds $(M_i, g_i)$ with $Ric(g_i) = \lambda g_i$ and $\lambda=1$ must converge in Gromov-Hausdorff topology, after passing to a subsequence, to a singular Kahelr-Einstein metric space homeomorphic to a projective variety with log terminal singularities (\cite{DS1}). Such K\"ahler-Einstein manifolds of positive scalar curvature, the volume must be an integer and the diameter must uniformly bounded above. Therefore, one immediately obtains uniform non-collapsing conditions for all $(M_i, g_i)$ at each point, from the volume comparison theorem. The partial $C^0$-estimate can be then applied to understand the algebraic structures of the limiting K\"ahler-Einstein metric spaces by developing a connection between the analytic and geometric K\"ahler-Einstein metrics and the algebraic Bergman metrics. This approach is also used to prove the fundamental relationship between existence of K\"ahler-Einstein metrics and K-stablity on Fano manifolds (c.f. \cite{CDS1, CDS2, CDS3, T4} ). 

However, when $\lambda=-1$, as one sees in the case of Riemann surfaces of high genus, the Einstein metrics can  collapse at the complete ends. Such complete ends correspond to the nodes from algebraic degeneration of high genus Riemann surfaces. The major result in real dimension $4$ is due to Cheeger-Tian \cite{CT}, much later than the case of positive scalar curvature. They apply the chopping techniques \cite{CG} in the collapsing theory and a refined $\epsilon$-regularity theorem for $4$-folds to establish a non-collapsing result for Einstein $4$-manifolds with uniformly bounded $L^2$-curvature. In particular, let $(M, J_i, g_i)$ be a sequence of K\"ahler-Einstein surfaces  on a smooth manifold $M$ of real dimension $4$, complex structure $J_i$ and $Ric(g_i) = - g_i$. Then it is proved in \cite{CT} that 
$(M, J_i, g_i)$ converge in pointed Gromov-Hausdorff topology, after passing to a subsequence, to a finite disjoint union of complete orbifold K\"ahler-Einstein  surfaces without loss of total volume.

In this paper, we aim to establish the first step towards the compactness of the space of K\"ahler-Einstein manifolds of negative scalar in all dimensions. We will consider any algebraic family 
$$\pi: \mathcal{X}^* \rightarrow B^*$$
over a punctured disc $B^*= B\setminus \{0\}$ in $\mathbb{C}$
such that  $\mathcal{X}_t = \pi^{-1} (t)$ is an $n$-dimensional K\"ahler manifold with $c_1(\mathcal{X}_t)<0$. 
Since $c_1(\mathcal{X}_t)<0$, or equivalently, the canonical bundle of $\mathcal{X}_t$ is ample, there exists a unique K\"ahler-Einstein metric $g_t$  on $\mathcal{X}_t$ satisfying %
\begin{equation}\label{-ke}
Ric(g_t) = - g_t. 
\end{equation}
 A natural question is how $(\mathcal{X}_t, g_t)$ behaves as $t\rightarrow 0$. 
 
 The first result was achieved in \cite{T0}, where Tian considers a very special family $\pi: \mathcal{X} \rightarrow B$ over a disc $B$ in $\mathbb{C}$ satisfying the following 
\begin{enumerate}
\item $\mathcal{X}_t$ is an $n$-dimensional K\"ahler manifold with $c_1(\mathcal{X}_t)<0$ for each $t\in B^*$. 

\medskip

\item The total space $\mathcal{X}$ is smooth. 

\medskip

\item The central fibre $\mathcal{X}_0= \cup_{\alpha=1}^\mathcal{A} X_\alpha$ is reduced and has only normal crossing singularities. In particular, each irreducible component $X_\alpha$ of $\mathcal{X}_0$ is a smooth K\"ahler manifold.

\medskip

\item Any three of the components of $\mathcal{X}_0$ have empty intersection. 

\end{enumerate}
It is shown in \cite{T0} that $(\mathcal{X}_t, g_t)$ must converge smoothly to a disjoint union of complete K\"ahler-Einstein manifolds $\coprod_{\alpha=1}^\mathcal{A} (Y_\alpha, g_\alpha)$ as $t\rightarrow 0$. In particular, $Y_\alpha = X_\alpha \setminus \mathcal{S}$, where $\mathcal{S}$ is the locus of the normal crossing singularities of $\mathcal{X}_0$.  Tian's result was generalized slightly in \cite{LL, Ru1, Ru2}, in particular, condition (4) is removed. However, the assumptions (2) and (3) are too strong. The recent development in algebraic moduli spaces of canonical models shows that in general, the total space has canonical singularities and the central fibre $\mathcal{X}_0$ has semi-log canonical singularities. 

Therefore, we will consider the following degeneration of K\"ahler manifolds of negative first Chern class, i.e., smooth canonical models, in the language of birational geometry. 

\begin{definition} \label{def1.1}

A holomorphic degeneration family $\pi: \mathcal{X} \rightarrow B $ over a disc $B\subset \mathbb{C}$  is said to be a stable degeneration if the following hold.

\begin{enumerate}

\item The total space $\mathcal{X}$ has canonical singularities.

\medskip

\item $\pi$ is a flat projective morphism.

\medskip

\item the fibre $\mathcal{X}_t$ is a smooth canonical model  with $\dim_{\mathbb{C}} \mathcal{X}_t = n$ for each $t\in B^*$, i.e., $c_1(\mathcal{X}_t)<0$. 

\medskip

\item The relative canonical sheaf $K_{\mathcal{X} / B}$ is a $\pi$-ample $\mathbb{Q}$-line bundle. 

\medskip

\item The central fibre $\mathcal{X}_0$ is a semi-log canonical model.

\end{enumerate} 

\end{definition}

The definition of  a semi-log canonical models of general type   and its non-log terminal locus $\textnormal{LCS} $ is given in Section 2 (c.f. Definition \ref{semilog} ). Roughly speaking, a semi-log canonical model of general type   is a projective variety whose codimensional one singularities are ordinary nodes, and its canonical divisor is an ample $\mathbb{Q}$-Cartier divisor. The non-log terminal locus is where the adapted volume measure  is not locally integrable.



Before we state the compactness for $(\mathcal{X}_t, g_t)$ as $t\rightarrow 0$ of a stable degeneration of smooth canonical models of general type, we first construct a canonical K\"ahler-Einstein current on any semi-log canonical models of general type. 

\begin{theorem} \label{main1} Let $X$ be a semi-log canonical model with $\dim_{\mathbb{C}} X = n$. There exists a unique K\"ahler current $\omega_{KE} \in -c_1(X)$ such that 

\begin{enumerate}

\item $\omega_{KE}$ is smooth on $\mathcal{R}_X$, the nonsingular part of $X$,  and it satisfies the K\"ahler-Einstein equation on $\mathcal{R}_X$ 
$$Ric(\omega_{KE} ) = - \omega_{KE}.$$
\item $\omega_{KE}$ has bounded local potentials on the quasi-projective variety $X\setminus \textnormal{LCS}(X)$, where $\textnormal{LCS}(X)$ is the non-log terminal locus of $X$. More precisely, let $$\Phi: X \rightarrow \mathbb{CP}^N$$ be a projective embedding of $X$ by a pluricanonical system $H^0(X, mK_X)$ for some $m\in \mathbb{Z}^+$ and let $\chi = \frac{1}{m} \omega_{FS}|_{X}$, where $\omega_{FS}$ is the Fubini-Study metric on $\mathbb{CP}^N$.  Then  $\omega_{KE} = \chi + \ddbar \varphi_{KE}$ for some $\varphi _{KE}\in PSH(X, \chi)$ such that $$\varphi \in L^\infty_{loc} (X\setminus \textnormal{LCS}(X)), ~~ \varphi \rightarrow - \infty ~\textnormal{near}~ \textnormal{LCS}(X) . $$

\item The Monge-Amp\`ere mass $\omega_{KE}^n$ does not charge mass on the singularities of $X$ and $$\int_{X} \omega_{KE}^n = [K_X]^n. $$

\end{enumerate}

\end{theorem}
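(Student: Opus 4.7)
The plan is to realize $\omega_{KE}$ as the solution of a singular complex Monge--Amp\`ere equation of the form $(\chi + \ddbar \varphi)^n = e^{\varphi}\Omega$ on $X$, where $\Omega$ is the adapted volume measure defined locally through a section of $mK_X$. The semi-log canonical hypothesis means that $\Omega$ has only logarithmic pole-type singularities which are locally integrable on $X \setminus \textnormal{LCS}(X)$ but fail to be integrable across $\textnormal{LCS}(X)$, so the direct methods of Eyssidieux--Guedj--Zeriahi for the log terminal case do not apply and a careful regularization is required.

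First I would pass to the normalization $\nu\colon X^\nu \to X$ together with its conductor divisor $D^\nu$, so that $(X^\nu, D^\nu)$ is a log canonical pair with $\nu^* K_X = K_{X^\nu} + D^\nu$. Taking a log resolution $\mu\colon \tilde X \to X^\nu$ of $(X^\nu, D^\nu)$, the pulled-back measure acquires the explicit shape $\tilde\Omega = \prod_i |s_i|^{-2a_i}\,\Omega_0$ for a smooth positive volume form $\Omega_0$ on $\tilde X$, with $a_i \le 1$ and $a_i = 1$ exactly along the components mapping into $\textnormal{LCS}(X)$. I would then approximate $\tilde\Omega$ by smooth densities $\tilde\Omega_\epsilon$ of uniformly bounded total mass (for example, replacing $|s_i|^2$ by $|s_i|^2 + \epsilon$) and solve the regularized Aubin--Yau equation
\begin{equation*}
(\mu^*\chi + \epsilon\,\omega_{\tilde X} + \ddbar\tilde\varphi_\epsilon)^n = e^{\tilde\varphi_\epsilon}\tilde\Omega_\epsilon
\end{equation*}
on the smooth projective manifold $\tilde X$, where $\omega_{\tilde X}$ is an auxiliary K\"ahler form chosen to make the left-hand side positive.

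The main work lies in the a priori estimates. The maximum principle gives a uniform upper bound on $\tilde\varphi_\epsilon$. For the lower bound on any compact $K \Subset \tilde X \setminus \mu^{-1}(\textnormal{LCS}(X))$, I would combine a local Kolodziej-type $L^\infty$-estimate with the fact that $\tilde\Omega_\epsilon$ has uniformly bounded $L^p$-norm near $K$ for some $p > 1$, which in turn rests on the log terminal property of $(X^\nu, D^\nu)$ away from $\textnormal{LCS}(X)$. Weak compactness of quasi-plurisubharmonic functions then extracts a subsequential limit $\tilde\varphi$, and the standard Evans--Krylov and Schauder bootstrap for the Monge--Amp\`ere equation yields smoothness of $\tilde\varphi$ over the preimage of $\mathcal R_X$; descending by $\nu \circ \mu$ produces the desired potential $\varphi_{KE}$, smooth on $\mathcal R_X$ and solving $\mathrm{Ric}(\omega_{KE}) = -\omega_{KE}$ there. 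The divergence $\varphi_{KE} \to -\infty$ near $\textnormal{LCS}(X)$ is then forced by barrier comparison with the explicit model potentials $\sum_i (1-a_i)\log|s_i|^2$ along the divisors with $a_i = 1$.

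Item (3) follows because $\varphi_{KE} \equiv -\infty$ on $\textnormal{LCS}(X)$ makes that set complete pluripolar, so the Bedford--Taylor Monge--Amp\`ere mass $\omega_{KE}^n = e^{\varphi_{KE}}\Omega$ cannot charge it, and integrating the equation gives $\int_X \omega_{KE}^n = [K_X]^n$ after absorbing the normalization constants. Uniqueness follows from the pluripotential comparison principle together with strict convexity of the Ding functional along bounded geodesics in $PSH(X,\chi)\cap L^\infty_{loc}(X\setminus \textnormal{LCS}(X))$. The principal obstacle is precisely the uniform a priori estimate: showing that the limit potential is genuinely locally bounded on all of $X \setminus \textnormal{LCS}(X)$ rather than merely quasi-plurisubharmonic there, uniformly as $\epsilon \to 0$, requires a delicate interplay between Kolodziej-type estimates adapted to the pair $(X^\nu, D^\nu)$ and the non-normal, semi-log canonical structure of $X$, and this step is the technical heart of the argument.
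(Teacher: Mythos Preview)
Your general framework (lift to a log resolution, regularize the density, solve Aubin--Yau, extract a limit) matches the paper's, but the crucial step---the uniform local lower bound for $\tilde\varphi_\epsilon$ away from $\mu^{-1}(\textnormal{LCS}(X))$---has a real gap. You propose to use a ``local Ko{\l}odziej-type $L^\infty$-estimate'' based on local $L^p$ control of $\tilde\Omega_\epsilon$. But Ko{\l}odziej's estimate is global: it bounds $\|\varphi\|_{L^\infty}$ in terms of the \emph{global} $L^p$ norm of the density. Here the global density $\tilde\Omega_\epsilon$ is not uniformly in $L^p$ for any $p>1$ as $\epsilon\to 0$, precisely because of the poles $|s_i|^{-2}$ along the divisors with $a_i=1$ over $\textnormal{LCS}(X)$. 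Local $L^p$ control gives nothing without boundary information, which is exactly what you are trying to prove. You acknowledge this is ``the technical heart,'' but you have not supplied the idea that makes it work.

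The paper's key device is Lemma~\ref{effdiv}: for each $p\in X\setminus\textnormal{LCS}(X)$ one constructs an effective $\mathbb{Q}$-divisor $G_p\sim K_X$ with $\textnormal{LCS}(X)\subset G_p$ and $p\notin G_p$. One then solves an \emph{auxiliary} family of equations (\ref{appcmp2}) with an extra factor $|\sigma_{G_p}|^{2\epsilon_0}$ in the density. Since $G_p$ contains the LCS divisors, this weight kills the non-integrable poles and restores a uniform global $L^{1+\eta}$ bound, so Ko{\l}odziej now applies and gives $\|\psi_{p,s,\delta,\epsilon_0}\|_{L^\infty}\le C$. The actual solution is then compared to this auxiliary solution via the maximum principle (Lemma~\ref{0logp}), yielding $\psi\ge \epsilon_0\log|\sigma_{G_p}|^2 - C$, which is exactly local boundedness away from $G_p$ (hence, varying $p$, away from $\textnormal{LCS}(X)$). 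This barrier-divisor trick is the missing ingredient in your outline.

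Two smaller points. Your barrier for divergence near $\textnormal{LCS}(X)$, namely $\sum_i(1-a_i)\log|s_i|^2$, vanishes identically on the relevant divisors (those with $a_i=1$), so it cannot force $\varphi\to-\infty$; the paper instead invokes a lemma of Berndtsson on plurisubharmonic functions with an $|z_1|^{-2}$-weighted integrability condition. And for uniqueness the paper does not use the Ding functional but rather a direct maximum-principle comparison, again using the $G_p$ barriers and the extra parameters $\delta,\epsilon$ to create room (Lemma~\ref{uniq}).
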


The K\"ahler-Einstein currents on semi-log canonical models  are first constructed in \cite{B} using a variational method and they coincide with the K\"ahler-Einstein currents constructed in Theorem \ref{main1} by the uniqueness. The estimates on local boundedness of the K\"ahler-Einstein potentials in \cite{B} are not sufficient to study the Riemannian geometric degeneration  for a stable degeneration of smooth canonical models. Our approach is to combine the fundamental results \cite{Kol1, EGZ, Z} and the maximum principle with suitable barrier functions. This helps us to obtain local $L^\infty$-estimates of local potentials away from the non-log terminal locus of $X$. In fact, it is shown in Section 3 that $\varphi_{KE}$ is milder than any log poles. 

We are now ready to state the following holomorphic compactness for the space of K\"ahler-Einstein manifolds with negative scalar curvature. 

\begin{theorem}\label{main2} Let $\pi: \mathcal{X} \rightarrow B$ be a stable degeneration of smooth canonical models of complex dimension $n$ over a disc $B\subset \mathbb{C}$. Suppose the central fibre $\pi^{-1}(0)$ is given by $\mathcal{X}_0 = \bigcup_{\alpha=1}^{\mathcal{A}} X_\alpha$, where $\{ X_\alpha \}_\alpha$ are the irreducible components of $\mathcal{X}_0$.   Let $g_t$ be the unique K\"ahler-Einstein metric on  $\mathcal{X}_t$ for $t\in B^*$ with 
$$Ric(g_t) = - g_t. $$
Then the following conclusions hold as $t\rightarrow 0$.

\begin{enumerate}

\item There exist points $(p^1_{ t}, p^2_{ t}, ..., p^{\mathcal{A}}_{ t})\in \mathcal{X}_t\times  \mathcal{X}_t \times ... \times  \mathcal{X}_t$ such that $(\mathcal{X}_t, g_t, p^1_t, ..., p^\mathcal{A}_t )$ converge in pointed Gromov-Hausdoff topology to a finite disjoint union of complete K\"ahler-Einstein metric spaces 
$$\mathbf{Y}= \coprod_{\alpha=1}^{\mathcal{A} } (Y_\alpha, d_\alpha, y_\alpha) .$$

\item Let $\mathcal{R}_{Y_\alpha}$ be the regular part of the metric space $(Y_\alpha, d_\alpha)$ for each $\alpha$. Then $(\mathcal{R}_{Y_\alpha}, d_\alpha)$ is a smooth K\"ahler-Einstein manifold of complex dimension $n$ and the singular set $\mathcal{S}_\alpha= Y_\alpha \setminus \mathcal{R}_{Y_\alpha}$ is closed and has Hausdorff dimension no greater than $2n-4$. 

\medskip

\item $\coprod_{\alpha=1}^\mathcal{A} Y_\alpha$ is homeomophic to $\mathcal{X}_0 \setminus \textnormal{LCS}(\mathcal{X}_0)$, where $\textnormal{LCS}(\mathcal{X}_0)$ is the non-log terminal locus of $\mathcal{X}_0$. $\coprod_{\alpha=1}^\mathcal{A}  \mathcal{R}_{Y_\alpha}$ is biholomorphic to the nonsingular part of $\mathcal{X}_0$. 

\medskip
 
\item $\sum_{\alpha=1}^\mathcal{A} \textnormal{Vol}(Y_\alpha, d_\alpha)= \textnormal{Vol}( \mathcal{X}_t, g_t)$ for all $t\in B^*$, where $\textnormal{Vol}(Y_\alpha, d_\alpha)$ is the Hausdorff measure of $(Y_\alpha, d_\alpha)$.

\end{enumerate}
\medskip

\noindent In particular, the K\"ahler-Einstein metric induced by $d_{\mathbf{Y}}$ on $\bigcup_{\alpha=1}^\mathcal{A} \mathcal{R}_{Y_\alpha}$ coincides with the unique K\"ahler-Einstein current on $\mathcal{X}_0$ in Theorem \ref{main1}.

\end{theorem}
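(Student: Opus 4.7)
The plan is to run the Monge-Amp\`ere analysis of Theorem \ref{main1} in a $t$-dependent way: produce uniform estimates for the K\"ahler-Einstein potentials $\varphi_t$ away from $\textnormal{LCS}(\mathcal{X}_0)$, pass to a pointed Gromov-Hausdorff limit via Cheeger-Colding theory, and identify the limit with the analytic degeneration of Theorem \ref{main1}. First I would use the $\pi$-ampleness of $K_{\mathcal{X}/B}$ to embed $\mathcal{X}\hookrightarrow \mathbb{CP}^N\times B$ via a relative pluricanonical system $|mK_{\mathcal{X}/B}|$, and set $\chi=\frac{1}{m}\omega_{FS}|_{\mathcal{X}}$. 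Writing $g_t = \chi|_{\mathcal{X}_t}+\ddbar \varphi_t$, the potential $\varphi_t$ solves the complex Monge-Amp\`ere equation
$$(\chi|_{\mathcal{X}_t}+\ddbar \varphi_t)^n = e^{\varphi_t}\Omega_t,$$
where $\Omega_t$ is the adapted volume form attached to a local generator of $K_{\mathcal{X}/B}$. On the central fibre this is precisely the equation whose solution is the $\varphi_{KE}$ of Theorem \ref{main1}.

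The core analytic step is to establish local $L^\infty$ bounds for $\varphi_t$ on compact subsets of $\mathcal{X}\setminus \textnormal{LCS}(\mathcal{X}_0)$ that are uniform in $t$ as $t\to 0$. The upper bound would follow from the Kolodziej/EGZ/Zhang $L^\infty$ theory, the canonicity of the total space keeping $\Omega_t$ in a uniformly integrable class. The lower bound would be produced by maximum-principle barriers of the form $\epsilon \log|\sigma|^2$, where $\sigma$ is a section whose divisor is supported in $\textnormal{LCS}(\mathcal{X}_0)$, in exact analogy with the proof of Theorem \ref{main1}. Combined with Yau's $C^2$ estimate and Evans-Krylov/Schauder bootstrap, this yields smooth convergence $\varphi_t\to \varphi_{KE,0}$ on compact subsets of the relative regular locus disjoint from $\textnormal{LCS}(\mathcal{X}_0)$, and hence smooth convergence of the metrics $g_t$ to $\omega_{KE,0}$ there.

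With these estimates, for each irreducible component $X_\alpha$ I would fix a smooth reference point $y_\alpha \in X_\alpha\setminus(\textnormal{Sing}(\mathcal{X}_0)\cup \textnormal{LCS}(\mathcal{X}_0))$ and a smoothly varying family $p^\alpha_t\to y_\alpha$. Smooth convergence near $y_\alpha$ provides volume non-collapsing at $p^\alpha_t$, and the uniform lower Ricci bound $\textnormal{Ric}(g_t)=-g_t$ allows Cheeger-Colding theory to extract a pointed Gromov-Hausdorff subsequential limit $(Y_\alpha, d_\alpha, y_\alpha)$. Smooth convergence on regular loci identifies $(\mathcal{R}_{Y_\alpha}, d_\alpha)$ biholomorphically and isometrically with $(\mathcal{R}_{X_\alpha}\setminus \textnormal{LCS}, \omega_{KE,0})$, which together with the closedness of $\textnormal{LCS}(\mathcal{X}_0)$ upgrades to the homeomorphism in conclusion (3); the $2n-4$ Hausdorff bound on $\mathcal{S}_\alpha$ is the standard Cheeger-Colding-Tian output for Einstein limits. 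Flatness and $\pi$-ampleness of $K_{\mathcal{X}/B}$ give $\textnormal{Vol}(\mathcal{X}_t,g_t)=[K_{\mathcal{X}_t}]^n=[K_{\mathcal{X}_0}]^n$, which by part (3) of Theorem \ref{main1} equals $\int_{\mathcal{X}_0}\omega_{KE,0}^n=\sum_\alpha \textnormal{Vol}(Y_\alpha,d_\alpha)$, securing conclusion (4) and ruling out phantom limit pieces; uniqueness of the subsequential limit then promotes the extraction to genuine convergence.

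The technically hardest step is the uniform-in-$t$ local $L^\infty$ estimate for $\varphi_t$ on $\mathcal{X}\setminus \textnormal{LCS}(\mathcal{X}_0)$: one must handle at once the canonical singularities of the total space (which make $\Omega_t$ a singular-but-integrable reference measure) and the blow-up of $\Omega_t$ along $\textnormal{LCS}(\mathcal{X}_0)$ as $t\to 0$, while keeping all constants independent of $t$. A related secondary obstacle is controlling the Gromov-Hausdorff geometry of tubular neighborhoods of $\textnormal{LCS}(\mathcal{X}_0)$: one must show they collapse in $g_t$ without producing spurious far-away limit pieces, which is essentially the content of the matching $\sum \textnormal{Vol}(Y_\alpha)=\textnormal{Vol}(\mathcal{X}_t)$ combined with the logarithmic blow-down of $\varphi_{KE,0}$ along $\textnormal{LCS}(\mathcal{X}_0)$ from Theorem \ref{main1}.
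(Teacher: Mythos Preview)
Your overall architecture (uniform Monge--Amp\`ere estimates $\Rightarrow$ non-collapsing $\Rightarrow$ Cheeger--Colding limit $\Rightarrow$ identification with $\mathcal{X}_0\setminus\textnormal{LCS}$) matches the paper's, and your volume-matching argument for density of $\mathcal{R}_{\mathcal{X}_0}$ in $\mathbf{Y}$ is exactly what the paper does. Two points, however, are understated to the point of being gaps.

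First, for the uniform lower bound on $\varphi_t$ you invoke barriers $\epsilon\log|\sigma|^2$ ``in exact analogy with Theorem \ref{main1}'', but the total space $\mathcal{X}$ is singular and the central fibre is not SNC, so the maximum-principle computation cannot be carried out directly on $\mathcal{X}$. The paper's solution is to pass to a \emph{semi-stable reduction} $\mathcal{X}'\to\mathcal{X}$ (smooth total space, reduced SNC central fibre) and run the barrier argument there with conical reference metrics; this is the device that makes the $t$-uniform estimate go through. You flag the difficulty but do not name the tool.

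Second, and more seriously, your derivation of conclusion (3) does not work. Smooth convergence on regular loci only gives $\mathcal{R}_{\mathcal{X}_0}\subset\mathcal{R}_{\mathbf{Y}}$, and volume matching gives density; neither yields the reverse inclusion nor the homeomorphism $\mathbf{Y}\cong\mathcal{X}_0\setminus\textnormal{LCS}(\mathcal{X}_0)$. The paper needs two substantial additional ingredients you do not mention: (a) local \emph{partial $C^0$-estimates} \`a la Tian/Donaldson--Sun, which produce pluricanonical sections on $\mathbf{Y}$ extending to $\mathcal{X}_0$ and are used both to prove $\mathcal{R}_{\mathbf{Y}}=\mathcal{R}_{\mathcal{X}_0}$ and to show the map $\Phi:\mathbf{Y}\to\mathcal{X}_0$ is injective; and (b) a \emph{distance estimate} establishing the principle ``bounded potential $\Leftrightarrow$ bounded distance''. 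In particular, showing that log-terminal singular points of $\mathcal{X}_0$ lie at \emph{finite} $d_{\mathbf{Y}}$-distance (so that $\Phi$ is onto $\mathcal{X}_0\setminus\textnormal{LCS}$) is the hardest step: the paper constructs auxiliary K\"ahler--Einstein currents solving $(\chi_0+\ddbar\phi_j)^n=e^{\phi_j}F_j\Omega_0$ with weights $F_j$ concentrated on collapsing balls, and compares them to $\omega_0$ via a local Schwarz lemma with cut-off in $d_{\mathbf{Y}}$. Your ``closedness of $\textnormal{LCS}$'' remark and the volume identity do not substitute for this; without it you cannot exclude, for instance, that a log-terminal singular point sits at infinite distance and $\mathbf{Y}$ misses it.
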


Let us say a few words about the proof. In order to prove Theorem \ref{main2}, we first have to achieve a uniform non-collapsing condition for $(\mathcal{X}_t, g_t)$, more precisely, we want to show that there exists $c>0$ and $p_t \in \mathcal{X}_t$  for all $t\in B^*$ such that 
$$Vol_{g_t} (B_{g_t}(p_t, 1) >c, ~ \textnormal{for all}~ t\in B^* . $$
 Without such a condition,  the Cheeger-Colding theory cannot be applied. Such a non-collapsing condition is achieved by uniform analytic estimates using the algebraic semi-stable reduction.
 With the noncollapsing condition, the partial $C^0$-estimates will naturally hold by the fundamental work in \cite{T1, DS1} (also \cite{T3} with an alternative approach). One of the difficult issues in the stable degeneration is that, the diameter will in general tend to $\infty$ and there must be collapsing at the complete ends in the limiting metric space. We will have to estimate the distance near the singular locus of $\mathcal{X}_0$. It turns out that the general principle for geometric K\"ahler currents applies, i.e., boundedness of the local potential is equivalent to boundedness of distance to a fixed regular base point. Such a principle is achieved by building a local Schwarz lemma with suitable auxiliary K\"ahler-Einstein currents and the $L^\infty$-estimates for degenerate complex Monge-Amp\`ere equations from the capacity theory \cite{Kol1, EGZ, Z}.

The recent progress in birational geometry enables the KSBA compatification of canonical models, where the boundary points are semi-log canonical models. Then Theorem \ref{main2} also shows that the K\"ahler-Einstein current on smoothable semi-log canonical models, or boundary points of the moduli of smooth canonical models, are indeed Riemannian geometric with good behaviors near singularities.   
The work of Hacon-Xu \cite{HX} shows that given any algebraic family of smooth canonical models  $\mathcal{X}^*$ over a punctured disc $B^*\subset \mathbb{C}$, after a base change for $B^*$, one can uniquely fill in a central fibre $\mathcal{X}_0$ such that $\mathcal{X}_0$ is a semi-log canonical model and the total space $\mathcal{X}$ has only canonical singularities.   Therefore, by assuming the results in birational geometry, we can remove the assumption on the stable degeneration in Theorem \ref{main2} and obtain a general holomorphic compactness result for K\"ahler-Einstein manifolds of negative scalar curvature.  

\begin{theorem} \label{main3}

Let $\pi: \mathcal{X}^* \rightarrow B^*$ be an algebraic family  of  compact K\"ahler manifolds of complex dimension $n$ over a punctured disc $B^*\subset \mathbb{C}$. Suppose   $c_1(\mathcal{X}_t) <0$ for each  $\mathcal{X}_t= \pi^{-1}(t)$, $t\in B^*$. Let $g_t$ be the unique K\"ahler-Einstein metric on  $\mathcal{X}_t$ with 
$$Ric(g_t) = - g_t.$$
Then the following  holds as $t\rightarrow 0$.

\begin{enumerate}

\item $(\mathcal{X}_t, g_t )$ converges in pointed Gromov-Hausdoff topology to a finite disjoint union of complete K\"ahler-Einstein metric spaces %
$$\mathbf{Y} =\coprod_{\alpha=1}^{\mathcal{A} } (Y_\alpha,  d_{\alpha}) .$$

\item Let $\mathcal{R}_{Y_\alpha}$ be the regular part of $(Y_\alpha, d_\alpha)$ for each $\alpha$. Then $(R_{Y_\alpha}, d_\alpha)$ is a smooth K\"ahler-Einstein manifold of complex dimension $n$ and the singular set $\mathcal{S}_\alpha= Y_\alpha \setminus \mathcal{R}_{Y_\alpha}$ is closed and has Hausdorff dimension no greater than $2n-4$. 

\medskip

\item There exists a unique projective variety $\mathcal{X}_0$ with semi-log canonical singularities and an ample $\mathbb{Q}$-Cartier canonical divisor $K_{\mathcal{X}_0}$ such that  $\coprod_{\alpha=1}^\mathcal{A} Y_\alpha$ is homeomophic to $\mathcal{X}_0 \setminus \textnormal{LCS}(\mathcal{X}_0)$, where $\textnormal{LCS}(\mathcal{X}_0)$ is the non-log terminal locus of $\mathcal{X}_0$. In particular, $\coprod_{\alpha=1}^\mathcal{A}  \mathcal{R}_{Y_\alpha}$ is biholomorphic to the nonsingular part of $\mathcal{X}_0$. 

\medskip
 
\item $\sum_{\alpha=1}^\mathcal{A} \textnormal{Vol}(Y_\alpha, d_\alpha)= \textnormal{Vol}(\mathcal{X}_t, g_t)$ for each $t\in B^*$.

\end{enumerate}

\end{theorem}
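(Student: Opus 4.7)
The plan is to deduce Theorem \ref{main3} from Theorem \ref{main2} by invoking the algebraic result of Hacon-Xu \cite{HX} to produce a stable degeneration after a finite base change. First, by \cite{HX}, there exists $k\in \mathbb{Z}^+$ and a finite ramified base change $\tau: \widetilde{B}\to B$, $s\mapsto s^k$, such that the pull-back $\widetilde{\pi}:\widetilde{\mathcal{X}}^*\to \widetilde{B}^*$ admits a unique extension $\widetilde{\pi}:\widetilde{\mathcal{X}}\to \widetilde{B}$ across the origin with $\widetilde{\mathcal{X}}$ having only canonical singularities, the central fibre $\widetilde{\mathcal{X}}_0$ a projective semi-log canonical model, and $K_{\widetilde{\mathcal{X}}/\widetilde{B}}$ a $\widetilde{\pi}$-ample $\mathbb{Q}$-line bundle. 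In particular, $\widetilde{\pi}:\widetilde{\mathcal{X}}\to \widetilde{B}$ satisfies all five conditions of Definition \ref{def1.1} and is therefore a stable degeneration of smooth canonical models.

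Next, apply Theorem \ref{main2} to $\widetilde{\pi}:\widetilde{\mathcal{X}}\to \widetilde{B}$. Writing $\widetilde{g}_s$ for the unique K\"ahler-Einstein metric on $\widetilde{\mathcal{X}}_s$ with $Ric(\widetilde{g}_s)=-\widetilde{g}_s$, one obtains a finite disjoint union $\mathbf{Y}=\coprod_{\alpha=1}^{\mathcal{A}}(Y_\alpha,d_\alpha)$ of complete K\"ahler-Einstein metric length spaces of complex dimension $n$, arising as the pointed Gromov-Hausdorff limit of $(\widetilde{\mathcal{X}}_s,\widetilde{g}_s)$ as $s\to 0$ (with respect to a chosen $\mathcal{A}$-tuple of base points). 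Conclusions (1), (2) and (4) of Theorem \ref{main3} then hold for the family parametrized by $\widetilde{B}^*$, and conclusion (3) identifies $\coprod_\alpha Y_\alpha$ with $\widetilde{\mathcal{X}}_0\setminus \textnormal{LCS}(\widetilde{\mathcal{X}}_0)$, biholomorphically on regular parts.

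To transfer these conclusions to the original parametrization, observe that for each $s\in \widetilde{B}^*$ the fibre $\widetilde{\mathcal{X}}_s$ is canonically biholomorphic to $\mathcal{X}_{\tau(s)}=\mathcal{X}_{s^k}$, and the K\"ahler-Einstein metric is an intrinsic invariant of the complex structure on a smooth canonical model; hence this biholomorphism is an isometry identifying $\widetilde{g}_s$ with $g_{s^k}$. Given any sequence $t_j\to 0$ in $B^*$, pick any lift $s_j\in \tau^{-1}(t_j)$, which automatically satisfies $s_j\to 0$ in $\widetilde{B}^*$; then $(\mathcal{X}_{t_j},g_{t_j})$ is isometric to $(\widetilde{\mathcal{X}}_{s_j},\widetilde{g}_{s_j})$ and hence converges to $\mathbf{Y}$ in pointed Gromov-Hausdorff topology. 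This yields the required convergence as $t\to 0$ in $B^*$, with the projective variety $\mathcal{X}_0$ in conclusion (3) taken to be $\widetilde{\mathcal{X}}_0$ viewed as an abstract semi-log canonical projective variety. Volume preservation follows similarly, since $\textnormal{Vol}(\mathcal{X}_{t_j},g_{t_j})=\textnormal{Vol}(\widetilde{\mathcal{X}}_{s_j},\widetilde{g}_{s_j})$.

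The heart of the argument rests on Theorem \ref{main2}; the remaining step is the purely algebraic input from \cite{HX}, which is the only genuine obstacle. Without this birational-geometric filling-in one would have no intrinsic candidate for $\mathcal{X}_0$ starting from an arbitrary family $\mathcal{X}^*$, whereas with it the analysis is performed fibrewise for the stable degeneration and then transferred to the original family by the intrinsic nature of the K\"ahler-Einstein metric. Note that different choices of branch of $t^{1/k}$ give different sequences $s_j\to 0$ in $\widetilde{B}^*$, but all such sequences converge to the same limit $\mathbf{Y}$ by Theorem \ref{main2}, so the Gromov-Hausdorff limit is independent of these choices.
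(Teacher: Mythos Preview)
Your proposal is correct and follows precisely the approach indicated in the paper: the paper states (after Theorem \ref{fillin} and again at the end of Section 7) that Theorem \ref{main3} ``immediately follows from Theorem \ref{main2} by using the algebraic result of Hacon and Xu \cite{HX},'' without spelling out further details. Your write-up supplies exactly those details --- the base change, the intrinsic nature of the K\"ahler-Einstein metric identifying $(\mathcal{X}_{t_j}, g_{t_j})$ with $(\widetilde{\mathcal{X}}_{s_j}, \widetilde{g}_{s_j})$, and the independence of the limit from the choice of branch --- and is in fact more careful than the paper's own treatment.
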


We believe that Theorem \ref{main3} will be crucial to establish a general compactness for the space of  K\"ahler-Einstein manifolds with negative scalar curvature. Certainly, at this moment, we will have to assume the algebraic KSBA compatification of the moduli space of canonical models and turn the weak semi-stable reduction over higher dimensional base into suitable analytic and geometric estimates. It is also possible one can directly prove Theorem \ref{main3} without using the algebraic results of \cite{HX}. If so, one might use the compactness for holomorphic families of K\"ahler-Einstein manifolds to construct algebraic fill-in and use such an analytic approach to study the algebraic moduli problems.  

There have been many results for the study of canonical K\"ahler metrics on projective varieties of non-negative Kodaira dimension using the K\"ahler-Ricci flow or deformation of K\"ahler metrics of Einstein type \cite{ST1, ST2, S2, FGS}. Such deformation with both analytic and geometric estimates can also have applications in algebraic geometry, for example, an analytic proof of Kawamata's base point free theorem for minimal models of general type is obtained in \cite{S3}. More generally, the K\"ahler-Ricci flow is closely related to the minimal model program and the formation of finite time singularities is an analytic geometric transition of solitons for birational flips. A detailed program is laid out in \cite{ST3} with partial geometric results obtained in \cite{SW1, SW2, SY, S1}. 

Theorem \ref{main2} is a major improvement of Theorem 1.5 in \cite{S2}. This paper will be incorporated and replace Section 5 in \cite{S2}.

The organization of this article is as follows. A quick review is given in Section 2 for algebraic singularities and degenerations of canonical models.  In Section 3, we prove Theorem \ref{main1} by establishing the existence and uniqueness of canonical K\"ahler-Einstein currents on semi-log canonical models with effective analytic estimates. In Section 4, we derive various analytic estimates for the family of K\"ahler-Einstein metrics with a stable degeneration. In particular, we prove a uniform noncollapsing result. In Section 5, we will apply the Cheeger-Colding theory for the Riemannian degeneration of K\"ahler-Einstein manifolds and establish local partial $C^0$-estimates. In Section 5, we prove the distance estimates and establish the principle that bounded local potential is equivalent to bounded local distance. In Section 6, we combine all the previous results and prove Theorem \ref{main2}.


\section{Semi-log canonical models and stable families of canonical models}

In this section, we will have a quick review of semi-log canonical models and the algebraic degeneration of canonical models of general type. First, let us recall the definition for log canonical singularities of a projective variety. 

\begin{definition} \label{sing} Let $X$ be a normal projective variety such that $K_X$ is a $\mathbb{Q}$-Cartier divisor. Let $ \pi :  Y \rightarrow X$ be a log resolution and $\{E_i\}_{i=1}^p$ the irreducible components of the exceptional locus $Exc(\pi)$ of $\pi$. There there exists a unique collection $a_i\in \mathbb{Q}$ such that
$$K_Y = \pi^* K_X + \sum_{i=1}^{ p } a_i E_i .$$ Then $X$ is said to have
\begin{enumerate}

\item[$\bullet$] terminal singularities if  $a_i >0$, for all $i$.

\medskip

\item[$\bullet$] canonical singularities if $a_i \geq 0$, for all $i $.

\medskip

\item[$\bullet$]  log terminal singularities if $a_i > -1$, for all $i $.

\medskip

\item[$\bullet$]  log canonical singularities if $a_i \geq -1$, for all $i$.

\medskip

\end{enumerate}
A projective normal variety $X$ is said to be a canonical model of general type if $X$ has canonical singularities and $K_X$ is ample. 

\end{definition}

Smooth canonical models are simply K\"ahler manifolds of negative first Chern class. There always exists a unique K\"ahler-Einstein metric on smooth canonical models \cite{A, Y1}.  The notion of semi-log canonical models is crucial  in the degeneration of smooth canonical models (c.f. \cite{K1, Kov}).

\begin{definition} \label{semilog} A reduced projective variety $X$ is said to be a semi-log canonical model  if 

\begin{enumerate}
\item $K_X$ is an ample $\mathbb{Q}$-Cartier divisor, 
\item $X$ has only ordinary nodes in codimension $1$, 

\item For any log resolution $\pi: Y \rightarrow X$, 
$$K_Y = \pi^* K_X  + \sum_{i=1}^I a_i E_i - \sum_{j=1}^J  F_j,$$
where  $E_i$ and $F_j$, the irreducible components of exceptional divisors,  are smooth divisors of  normal crossings with $a_i > -1$.
\end{enumerate}
We denote the algebraic closed set  $\textnormal{LCS}(X) =Supp\left(  \pi\left(\bigcup_{j=1}^J F_j \right) \right)$ in $X$ to be the locus of non-log terminal singularities of $X$. We also let $\mathcal{R}_X$ be the nonsingular part of $X$ and $\mathcal{S}_X$ the singular set of $X$.

\end{definition}
We always require $X$ is $\mathbb{Q}$-Gorenstein and satisfies Serre's $S_2$ condition. We refer such algebraic notions to \cite{Kov}.  The non-log terminal locus is the set of singular points of $X$ with their discrepancy equal to $-1$. Analytically, such locus is the set of points near which the adapted volume measure is locally not integrable.

In general, $X$ is not normal. We first let $\nu: X^\nu \rightarrow X$ be the normalization of $X$ and 
$$K_{X^\nu} = \nu^* K_X - cond(\nu), $$ 
where $cond(\nu)$ is a reduced effective divisor and is called the conductor. In fact, $cond(\nu)$ is the inverse image of codimensional $1$ ordinary nodes. $K_{X^\nu}+ cond(\nu)$ is a big and semi-ample divisor on $X^\nu$ and so the pair $(X^\nu, K_{X^\nu}+cond(\nu))$ has log canonical singularities. 
Let $\pi^\nu: Y \rightarrow X^\nu$ be a log resolution. Then $\pi = \nu \circ \pi^\nu: Y \rightarrow X$ is also a log resolution and 
$$K_Y = \pi^*K_X + \sum a_i E_i  - \sum F_j, ~ a_i >-1, $$
where $E_i$, $F_j$ are the exceptional prime divisors of $\pi$.  In other words, the normalization $(X^\nu)$ of $X$ gives  rise to a log canonical pair $(X^\nu, cond(\nu) )$. In particular,  if one aims to construct a K\"ahler-Einstien metric $g_{KE}$ on $X$, $g_{KE}$ can be pull backed to a suitable K\"ahler current on $X^\nu$ in the class of $K_{X^\nu} + cond(\nu)$.

We now consider the following algebraic degeneration of K\"ahler manifolds with negative first Chern class.

\begin{definition}\label{stabledeg}

A flat projective morphism $\pi: \mathcal{X} \rightarrow B$ over a smooth Riemann surface $B$ is called a stable degeneration of  smooth canonical models  if it satisfies the following conditions.   

\begin{enumerate}

\item $\mathcal{X}$ is a normal projective variety with canonical singularities,

\item $K_{\mathcal{X}}$ is an $\pi$-ample $\mathbb{Q}$-Cartier divisor, 

\item $\mathcal{X}_t = \pi^{-1}(t)$ is a smooth canonical model  of complex dimension $n$, for $t\neq 0$, 

\item $\mathcal{X}_0 = \pi^{-1}(0)$ is a semi-log canonical model. 

\end{enumerate}

\end{definition}

The analysis for K\"ahler-Einstien metrics on singular varieties is usually carried out on the nonsingular model, for example, the log resolution of the original variety. Therefore we would like to have a good nonsingular model for the stable degeneration $\pi: \mathcal{X} \rightarrow B$ so that $\mathcal{X}$ will be nonsingular and the central fibre is a reduced divisor of smooth components with normal crossings. The following resolution of singularities for families over a Riemann surface is called the semi-stable reduction, proved in \cite{KKMS}. 
 
\begin{theorem} \label{reduction} Let $\pi: \mathcal{X} \rightarrow B$ be a morphism from $\mathcal{X}$ onto a Riemann surface $B$. Suppose $0\in B$ and $\pi: \mathcal{X} \setminus \pi^{-1}(0) \rightarrow B\setminus \{0\}$ is smooth. Then there exists a finite base change $f: B' \rightarrow B$ and a blow-up morphism $\Psi: \mathcal{X}' \rightarrow \mathcal{X}\times_B B'$
 \begin{equation}
\begin{diagram}
\node{\mathcal{X}'} \arrow{se,l}{ \pi' }  \arrow{e,t}{\Psi}   \node{\mathcal{X}\times_B B'}  \arrow{e,t}{f'} \arrow{s}  \node{\mathcal{X} } \arrow{s,r}{\pi} \\
\node{}      \node{B'} \arrow{e,t}{f}  \node{B}
\end{diagram}
\end{equation}
 such that the induced morphism $\pi' : \mathcal{X}' \rightarrow B'$ is semi-stable, i.e., 

\begin{enumerate}

\item $\mathcal{X}'$ is nonsingular, 

\medskip

\item $(\pi')^{-1}(0)$ is a reduced divisor with nonsingular components of  normal crossings.

\end{enumerate}

\end{theorem}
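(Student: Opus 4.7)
The plan is to carry out the classical three-step KKMS reduction: embedded desingularization, a cyclic base change to absorb fiber multiplicities, and a toric refinement to achieve simultaneous smoothness and reducedness. First I would apply Hironaka's theorem on embedded resolution of singularities to the pair $(\mathcal{X}, \pi^{-1}(0))$, producing a proper birational morphism $\mathcal{X}_1 \rightarrow \mathcal{X}$, isomorphic over $B^* = B \setminus \{0\}$, such that $\mathcal{X}_1$ is smooth and the induced central fiber $\pi_1^{-1}(0) = \sum_i m_i D_i$ is supported on a simple normal crossings divisor. The multiplicities $m_i \in \mathbb{Z}^+$ need not all equal one; this is the obstruction to semi-stability that the base change is designed to remove.

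Next I would perform the cyclic base change $f : B' \rightarrow B$, $s \mapsto s^N$, where $N = \mathrm{lcm}(m_i)$, and form the fiber product $\mathcal{X}_1 \times_B B'$. In local analytic coordinates $(z_1, \dots, z_n)$ near a point where $k$ components $D_1, \dots, D_k$ meet, $\pi_1$ takes the monomial form $z_1^{m_1} \cdots z_k^{m_k}$, so the fiber product is cut out locally by $z_1^{m_1} \cdots z_k^{m_k} = s^N$. This is an affine toric variety with at worst abelian quotient singularities, and the pair $(\mathcal{X}_1 \times_B B', (\pi_1')^{-1}(0))$ is a toroidal embedding whose combinatorics are governed by a fan in a lattice of rank $k+1$, together with the distinguished linear form $\ell = (m_1, \dots, m_k, -N)$ encoding the defining equation.

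The final step is a toric modification $\Psi : \mathcal{X}' \rightarrow \mathcal{X}_1 \times_B B'$ obtained by refining the associated fan. One seeks a simplicial unimodular subdivision whose primitive ray generators $v$ all satisfy $\ell(v) = 1$ on rays corresponding to exceptional divisors mapping into $\pi^{-1}(0)$; unimodularity delivers smoothness of $\mathcal{X}'$, while the normalization $\ell(v) = 1$ forces every component of $(\pi')^{-1}(0)$ to appear with multiplicity one, hence reducedness. Standard toric techniques (iterated stellar subdivision followed by unimodular refinement) produce such a fan, and patching the local constructions together with an equivariance argument yields the global $\mathcal{X}'$.

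The main obstacle is the third step: refining the fan so that smoothness and reducedness hold \emph{simultaneously}. Either condition alone is easy — smoothness follows from any unimodular refinement, and reducedness can be enforced by rescaling — but reconciling the two requires the explicit KKMS combinatorial procedure, which also determines how large the degree $N$ of the base change must be. This is precisely why $f$ cannot in general be taken trivial, and it absorbs essentially all of the difficulty of the theorem.
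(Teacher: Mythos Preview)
The paper does not prove this theorem: it is stated as a known result and attributed to \cite{KKMS} (Kempf--Knudsen--Mumford--Saint-Donat), with no argument given. Your proposal is therefore not to be compared against anything in the paper itself, but it is a faithful outline of the classical KKMS proof --- Hironaka to make the central fiber simple normal crossings, a cyclic base change of degree $N = \mathrm{lcm}(m_i)$, and then a toroidal modification to achieve smoothness and reducedness simultaneously. One small omission: after the base change, one typically \emph{normalizes} the fiber product $\mathcal{X}_1 \times_B B'$ before invoking the toroidal machinery; the normalization is what is toroidal, not the raw fiber product. Your identification of the third step as the genuine difficulty is accurate.
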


In this paper, we will always assume $B$ is a unit disc in $\mathbb{C}$ and let  $t$ be the Euclidean holomorphic coordinate on $B$. The special fibre is given by $\pi^{-1}(0)$. We can calculate the relations of the canonical divisors in the case that $\pi: \mathcal{X} \rightarrow B$ is a stable degeneration of smooth canonical models. We assume that the base change $f$ is given by $t= f(t')= (t')^d$ for some $d\in \mathbb{Z}^+$. 

Let $\Psi': \mathcal{X}' \rightarrow \mathcal{X}$ be the morphism of the semi-stable reduction induced by $\Psi$ and $f$. Since $\mathcal{X}$ has canonical singularities,  $\mathcal{X}'$ also has canonical singularities.  The base change  has ramification along the central fibre of degree $d-1$ and so we have  
$$K_{\mathcal{X}\times_B B'} = (f')^*K_{\mathcal{X}} + (d-1) X'_0,$$
where $X'_0$ is the strict transform of $\mathcal{X}_0$. Therefore

$$K_{\mathcal{X}'} =(\Psi')^* K_\mathcal{X}  + (d-1)\mathcal{X}'_0 + \sum a_j E_j $$
where  $\mathcal{X}'_0 =(\pi')^{-1}(0)$ and $E_j$ are nonsingular components of the exceptional divisor with $a_j \geq 0$. 

The following adapted volume measure is introduced in \cite{EGZ} to study the complex Monge-Amp\`ere equations on singular projective varieties. 

\begin{definition} \label{adapvol} Let $Y$ be a projective variety of normal singularities with a $\mathbb{Q}$-Cartier canonical divisor $K_Y$.  $\Omega$ is said to be an adapted measure on $Y$ if  for any $z\in Y$, there exists an open neighborhood $U$ of $z$ such that $$\Omega = f_U ( \alpha \wedge \overline{\alpha})^{\frac{1}{m}},$$ where $f_U$ is the restriction of a smooth positive function on the ambient space of a projective embedding $U$ and $\alpha$ is a local generator of the Cartier divisor $mK_Y$ on $U$ for some $m\in \mathbb{Z}^+$.
\end{definition}
The adapted volume measure can also be defined for semi-log canonical models via pluricanonical embeddings. However, such volume measures are not locally integrable near the non-log terminal locus. 

\begin{lemma} Let $\pi: \mathcal{X} \rightarrow B$ be a  stable degeneration of smooth canonical models  and $\pi': \mathcal{X}' \rightarrow B'$ be a semi-stable reduction as in Theorem \ref{reduction}. If $\sigma$ is a holomorphic section of $mK_{\mathcal{X}/
B}$  for some $m\in \mathbb{Z}^+$ and $t'$ is holomorphic coordinate of $B'$, then $dt'  \wedge d\overline{t'} \wedge (\Psi')^*\left((\sigma\wedge \overline{\sigma} )^{1/m} \right)$ is a smooth nonnegative real $(2n+2)$-form on $\mathcal{X}'$. 

\end{lemma}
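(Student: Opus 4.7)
The plan is to identify $\Omega' := dt' \wedge d\overline{t'} \wedge (\Psi')^*\bigl((\sigma \wedge \overline{\sigma})^{1/m}\bigr)$ (up to sign) as the volume density $(\eta' \wedge \overline{\eta'})^{1/m}$ of a holomorphic section $\eta'$ of $mK_{\mathcal{X}'}$ on the smooth variety $\mathcal{X}'$, and to verify that $\eta'$ extends holomorphically across the central fibre $\mathcal{X}'_0$ by a line-bundle computation in which the ramification of the base change $f$ absorbs the $(d-1)\mathcal{X}'_0$ summand in the discrepancy formula.

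First I would combine $\sigma$ with the base one-form by setting $\eta := \sigma \otimes (\pi^*dt)^{\otimes m} \in H^0(\mathcal{X}, mK_\mathcal{X})$ and its $\mathcal{X}'$-analogue $\eta' := (\Psi')^*\sigma \otimes ((\pi')^*dt')^{\otimes m}$. A direct local computation in fibred coordinates $(z_1, \ldots, z_n, t)$ on the smooth locus of $\pi$ yields $(\eta \wedge \overline{\eta})^{1/m} = \pm\, dt \wedge d\overline{t} \wedge (\sigma \wedge \overline{\sigma})^{1/m}$, and the analogous identity on $\mathcal{X}'$ gives $\Omega' = \pm(\eta' \wedge \overline{\eta'})^{1/m}$. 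The problem thus reduces to showing that $\eta'$ is a holomorphic section of $mK_{\mathcal{X}'}$ on all of $\mathcal{X}'$.

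Next I would carry out the line bundle bookkeeping. The base change $f(t') = (t')^d$ gives $(\Psi')^*(\pi^*dt) = d\,(t')^{d-1}\, (\pi')^*dt'$, hence
\[ (\Psi')^*\eta = d^m\, (t')^{m(d-1)}\, \eta'. \]
Combining the Riemann--Hurwitz identity $f^*K_B = K_{B'} - (d-1)[0]$ with the reducedness $(\pi')^*[0] = \mathcal{X}'_0$ (from the semi-stable reduction of Theorem~\ref{reduction}) and the discrepancy formula $K_{\mathcal{X}'} = (\Psi')^*K_\mathcal{X} + (d-1)\mathcal{X}'_0 + \sum_j a_j E_j$ with $a_j \geq 0$, a tensor-product calculation shows that $\eta'$ is a section of $mK_{\mathcal{X}'} - m\sum_j a_j E_j \hookrightarrow mK_{\mathcal{X}'}$; in particular it is holomorphic and vanishes to order $\geq m a_j$ along each exceptional $E_j$. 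The crucial cancellation is that the ramification factor $(t')^{m(d-1)}$ precisely absorbs the $m(d-1)\mathcal{X}'_0$ contribution in the pullback of $mK_\mathcal{X}$, which relies on both the canonical singularities of $\mathcal{X}$ (making $(\Psi')^*\eta$ holomorphic with the correct vanishing along $\mathcal{X}'_0$) and the reducedness of $\mathcal{X}'_0$.

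Finally, in a local coordinate chart $(z_1, \ldots, z_{n+1})$ on the smooth manifold $\mathcal{X}'$, writing $\eta' = F'(z)(dz_1 \wedge \cdots \wedge dz_{n+1})^{\otimes m}$ with $F'$ holomorphic, one reads off
\[ \Omega' = \pm\, c_{n+1}\, |F'(z)|^{2/m}\, dz_1 \wedge d\overline{z}_1 \wedge \cdots \wedge dz_{n+1} \wedge d\overline{z}_{n+1}, \]
a well-defined nonnegative real $(2n+2)$-form on $\mathcal{X}'$ with no pole anywhere. The main technical obstacle is the divisor-theoretic cancellation in the third paragraph; once that is confirmed, the asserted positivity, reality and regularity follow directly from the holomorphy of $F'$.
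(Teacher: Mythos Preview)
Your proof is correct. Both your argument and the paper's hinge on the same two inputs: the discrepancy formula $K_{\mathcal{X}'} = (\Psi')^*K_\mathcal{X} + (d-1)\mathcal{X}'_0 + \sum a_j E_j$ with $a_j \geq 0$, and the reducedness of $\mathcal{X}'_0$ from the semi-stable reduction. The paper, however, organizes the computation differently. Rather than constructing a section of $mK_{\mathcal{X}'}$, it fixes an adapted volume form $\Omega$ on $\mathcal{X}$ and a smooth volume form $\Omega'$ on $\mathcal{X}'$, writes
\[
\frac{dt'\wedge d\overline{t'}\wedge(\Psi')^*\bigl((\sigma\wedge\overline\sigma)^{1/m}\bigr)}{\Omega'}
=\frac{(\Psi')^*\Omega}{d^2|t'|^{2(d-1)}\Omega'}\cdot(\Psi')^*\left(\frac{dt\wedge d\overline t\wedge(\sigma\wedge\overline\sigma)^{1/m}}{\Omega}\right),
\]
and observes that $(\Psi')^*\Omega$ vanishes to order at least $2(d-1)$ along $\mathcal{X}'_0$, so the first factor is smooth while the second is the pullback of a bounded smooth quantity. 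Unwound, this is exactly your section $\eta'$: the first factor encodes the holomorphy of $(\Psi')^*(\text{local generator of }mK_\mathcal{X})/(t')^{m(d-1)}$ as a section of $mK_{\mathcal{X}'}$. Your formulation has the advantage that smoothness is immediate once $\eta'$ is holomorphic, with no need to interpret a bounded ratio; the paper's formulation has the advantage of dovetailing with the adapted-measure language used throughout Sections~3 and~4.
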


\begin{proof}  By definition of relative canonical bundle,  $dt\wedge d\overline t \wedge (\sigma\wedge \overline{\sigma})^{1/m}$ defines a smooth volume form on the regular part $\mathcal{R}_\mathcal{X}$ of $\mathcal{X}$, and for any adapted volume form $\Omega$ on $\mathcal{X}$ 
$$\sup_{\mathcal{R}_X} \frac{dt\wedge d\overline t \wedge (\sigma\wedge \overline{\sigma})^{1/m}}{\Omega} < \infty.$$
We consider the pullback of the relative volume form $(\Psi')^*\left(\sigma\wedge \overline{\sigma} \right)^{1/m})$. Let $\Omega'$ be a smooth volume form on $\mathcal{X}'$. Then we have
$$\frac{ d t' \wedge d\overline{t'} \wedge (\Psi')^*\left( (\sigma\wedge \overline{\sigma} \right)^{1/m})}{\Omega'} =\left( \frac{ (\Psi')^*\Omega}{d^2|t' |^{2(d-1)} \Omega'}  \right) (\Psi')^*\left( \frac{dt\wedge d\overline t \wedge (\sigma\wedge \overline{\sigma})^{1/m}}{\Omega} \right).$$
Since the volume measure $(\Psi')^*\Omega$ on $\mathcal{X}'$ vanishes along $\mathcal{X}'_0$ of order at least $2(d-1)$, therefore $d t \wedge d\overline{t'} \wedge (\Psi')^*\left((\sigma\wedge \overline{\sigma} )^{1/m} \right)$ is a smooth nonnegative real $2n+2$-form on $\mathcal{X}'$.

\end{proof}

The following theorem is due to Hacon and Xu \cite{HX}. It establishes the unique compactification of an algebraic family  of smooth canonical models  over a punctured disc $B^*$ in $\mathbb{C}$. In the particular, the unique fill-in for the central fibre, possibly after a base change, is a semi-log canonical model. 

\begin{theorem} \label{fillin} Let  $\pi: \mathcal{X}^* \rightarrow B^*$ be an algebraic family  of smooth canonical models over a punctured dicc $B^* = B\setminus \{0\} \subset \mathbb{C}$. Then after a possible base change $f: B' \rightarrow B$, there exists a unique $\pi': \mathcal{X'} \rightarrow B'$ as an extension of $\mathcal{X}^*\times_{B^*} (B')^* \rightarrow (B')^*$ such that $\mathcal{X}'$ has canonical singularities and the central fibre $\mathcal{X}'_0= (\pi')^{-1}(0)$ is a semi-log canonical model. 

\end{theorem}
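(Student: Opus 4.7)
The plan is to first produce a projective fill-in over $B$ (after a base change), then run the relative minimal model program (MMP) to arrive at the relative canonical model, and finally to invoke inversion of adjunction to see that the central fibre inherits semi-log canonical singularities.

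First, choose any projective extension $\overline{\mathcal{X}} \to B$ of the given family $\pi: \mathcal{X}^* \to B^*$. Applying Theorem \ref{reduction} yields a finite base change $f_1 : B_1 \to B$ together with a semi-stable model $\tilde{\pi} : \tilde{\mathcal{X}} \to B_1$ in which $\tilde{\mathcal{X}}$ is smooth, the central fibre $\tilde{\mathcal{X}}_0 = \sum_i \tilde{X}_i$ is a reduced simple normal crossings divisor, and $\tilde{\pi}$ restricts to the base-changed family over $B_1 \setminus \{0\}$. In particular the pair $(\tilde{\mathcal{X}}, \tilde{\mathcal{X}}_0)$ is log smooth and hence log canonical, and $K_{\tilde{\mathcal{X}}/B_1}$ is $\tilde{\pi}$-big because the generic fibre has ample canonical bundle.

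Next, I would run the relative $K_{\tilde{\mathcal{X}}/B_1}$-MMP over $B_1$. Since the generic fibre is a smooth canonical model, the relative canonical algebra $\bigoplus_{m \geq 0} \tilde{\pi}_* \mathcal{O}_{\tilde{\mathcal{X}}}(m K_{\tilde{\mathcal{X}}/B_1})$ is finitely generated by the relative version of the Birkar--Cascini--Hacon--McKernan theorem, and taking $\mathrm{Proj}$ over $B_1$ produces a relative canonical model $\pi' : \mathcal{X}' \to B'$, with $B' = B_1$ or a further base change if needed so that the central fibre becomes a reduced Cartier divisor. By construction $K_{\mathcal{X}'/B'}$ is $\pi'$-ample; and since we started from a log smooth pair and performed only flips and divisorial contractions, which improve singularities, $\mathcal{X}'$ has canonical singularities and the pair $(\mathcal{X}', \mathcal{X}'_0)$ remains log canonical in a neighborhood of the central fibre.

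To see that $\mathcal{X}'_0$ is a semi-log canonical model, I would apply inversion of adjunction in the form extended to semi-log canonical pairs by Koll\'ar--Kov\'acs: because $\mathcal{X}'_0$ is Cartier, adjunction gives $K_{\mathcal{X}'_0} = (K_{\mathcal{X}'} + \mathcal{X}'_0)|_{\mathcal{X}'_0} = K_{\mathcal{X}'/B'}|_{\mathcal{X}'_0}$, so $K_{\mathcal{X}'_0}$ is ample, while log canonicity of $(\mathcal{X}', \mathcal{X}'_0)$ near $\mathcal{X}'_0$ translates into semi-log canonicity of $\mathcal{X}'_0$ itself. Uniqueness of the extension then follows from the uniqueness of relative canonical models: any two such fill-ins have isomorphic relative pluricanonical algebras over a neighborhood of $0 \in B'$ and must therefore be $B'$-isomorphic. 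The main obstacle is the termination of the relative MMP together with the preservation of canonical singularities through every flip and divisorial contraction, which relies on the full weight of modern birational geometry; the base change $f$ is needed precisely so that the central fibre can be made reduced and Cartier, enabling the adjunction step to run cleanly.
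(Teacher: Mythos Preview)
The paper does not prove this theorem at all: it is stated as a result \emph{due to Hacon and Xu} \cite{HX} and is used as a black box to deduce Theorem~\ref{main3} from Theorem~\ref{main2}. So there is no ``paper's own proof'' to compare against.

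Your sketch is a faithful outline of how the Hacon--Xu argument actually goes: semi-stable reduction, relative MMP to the relative canonical model, and inversion of adjunction to identify the central fibre as slc. One point deserves sharpening. You invoke BCHM for finite generation of the relative canonical algebra, but BCHM applies to \emph{klt} pairs, whereas after semi-stable reduction the pair $(\tilde{\mathcal{X}},\tilde{\mathcal{X}}_0)$ is only dlt (the reduced central fibre has coefficient~$1$). Running and terminating the MMP in this non-klt setting---and in particular obtaining a relative canonical model with canonical total space and reduced slc central fibre---is exactly the content of \cite{HX} (``existence of log canonical closures''), not a consequence of BCHM alone. Your final paragraph acknowledges reliance on ``the full weight of modern birational geometry,'' but it would be more accurate to say that the step you label as the main obstacle \emph{is} the theorem being quoted, rather than an input to its proof.
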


Theorem \ref{main3} will immediately follow from Theorem \ref{main2} if we apply the algebraic result of Theorem \ref{fillin}. 

\section{K\"ahler-Einstein currents on semi-log canonical models}

In this section, we will construct unique K\"ahler-Einstein currents on semi-log canonical models. Such canonical currents are already constructed in \cite{B} by a variational method. We give a different approach by combining the work of \cite{Kol1, EGZ, Z} and the maximum principle to  establish  $L^\infty$-estimate for the local potentials of such K\"ahler-Einstein currents away from $\textnormal{LCS}$, the non-log terminal locus. Such estimates are particularly important to study the geometric degeneration of smooth canonical models and to establish Riemannian geometric properties of such analytically constructed K\"ahler-Einstein currents.

Let $X$ be a semi-log canonical model of with $\dim_{\mathbb{C}}X= n$. We follow the standard scheme for the construction of K\"ahler-Einstein currents by reducing the K\"ahler-Einstein equation on $X$ to a complex Monge-Amp\`ere equation on the nonsingular model of $X$.  

The first technical preparation is to construct good barrier functions for estimates of local potentials. Such construction relies on a suitable choice of complex hypersurfaces in $X$.

\begin{lemma} \label{effdiv}  For any point $p \in  X\setminus \textnormal{LCS}(X) $, there exists an effective $\mathbb{Q}$-divisor $G_p$ such that $G_p$ is numerically equivalent to $K_X$ and
$$p\notin G_p, ~  \textnormal{LCS}(X)  \subset G_p. $$

\end{lemma}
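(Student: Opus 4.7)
The plan is to realize $G_p$ as $\frac{1}{rk}\,\textnormal{div}(s)$ for a well-chosen global section $s$ of a sufficiently high pluricanonical sheaf on $X$. Since $K_X$ is an ample $\mathbb{Q}$-Cartier divisor on the projective variety $X$, I first fix $r\in \mathbb{Z}^+$ with $rK_X$ Cartier, and work with the ample line bundle $L=\OO_X(rK_X)$.

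The key step is to produce a section of a sufficiently high tensor power of $L$ that vanishes on $\textnormal{LCS}(X)$ but not at $p$. Recall from Definition \ref{semilog} that $\textnormal{LCS}(X)=\textnormal{Supp}(\pi(\cup_j F_j))$ is a proper closed algebraic subset of $X$ (closed because $\pi$ is proper), so it carries a reduced coherent ideal sheaf $\II=\II_{\textnormal{LCS}(X)}\subset \OO_X$, and $\II_p=\OO_{X,p}$ at our point $p\notin \textnormal{LCS}(X)$. By Serre's theorem on global generation of coherent sheaves twisted by a high power of an ample line bundle, for all $k$ sufficiently large the sheaf $\II\otimes L^{\otimes k}$ is globally generated on $X$. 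Applying global generation at $p$ together with $\II_p=\OO_{X,p}$ yields a section $s\in H^0\bigl(X,\II\otimes L^{\otimes k}\bigr)\subset H^0(X,L^{\otimes k})$ with $s|_{\textnormal{LCS}(X)}\equiv 0$ and $s(p)\neq 0$.

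Setting $G_p=\frac{1}{rk}\,\textnormal{div}(s)$ then completes the construction: $G_p$ is an effective $\mathbb{Q}$-divisor, it is $\mathbb{Q}$-linearly and hence numerically equivalent to $K_X$ since $s\in H^0(X,\OO_X(rkK_X))$, its support contains $\textnormal{LCS}(X)$ because $s$ vanishes there, and $p\notin \textnormal{Supp}(G_p)$ because $s(p)\neq 0$.

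I do not anticipate any serious obstacle. The entire argument rests on only two inputs: ampleness of $K_X$, which is part of the definition of a semi-log canonical model, and closedness of $\textnormal{LCS}(X)$ as an algebraic subset of $X$, which follows immediately from Definition \ref{semilog}. The $\mathbb{Q}$-Cartier issue is handled by passing to the uniform Cartier index $r$, and no finer information about the log resolution $\pi$ beyond the closedness of the image of $\cup_j F_j$ is required.
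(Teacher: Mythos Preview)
Your argument is correct. Both your proof and the paper's produce the desired $G_p$ as $\frac{1}{m}\,\textnormal{div}(s)$ for a pluricanonical section $s$ vanishing on $\textnormal{LCS}(X)$ but not at $p$, and differ only in how that section is found. You invoke Serre's global generation theorem for the coherent sheaf $\II_{\textnormal{LCS}(X)}\otimes L^{\otimes k}$ directly, which is clean and requires nothing beyond ampleness of $K_X$ and coherence of the ideal sheaf. The paper instead builds the section by hand: it first picks an ample hypersurface $G_1$ missing $p$, passes to the affine complement $X\setminus G_1\subset \mathbb{C}^N$, chooses a polynomial separating $p$ from the affine trace of $\textnormal{LCS}(X)$, clears poles by multiplying with a high power of $\sigma_{G_1}$, and finally tensors with a section of a large multiple of $K_X$ minus the resulting divisor to land in the pluricanonical system. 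Your route is shorter and more conceptual; the paper's is more explicit and avoids citing Serre's theorem as a black box, which may be marginally useful later when one wants concrete control over the section (though in fact the paper does not exploit this). Either way the output is the same effective $\mathbb{Q}$-divisor with the stated properties.
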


\begin{proof} Since $X$ is projective, we can choose an effective ample $\mathbb{Q}$-divisor $G_1$ such that $p\notin G_1$. We let $\sigma_1$ be the defining section of $m_1G_1$ for some $m_1\in \mathbb{Z}^+$ so that $m_1G_1$ is Cartier.  Since $G_1$ is ample, $X_1= X\setminus G_1$ is an affine variety in $\mathbb{C}^N$ for some large $N$. Hence we can pick a polynomial $f$ on $\mathbb{C}^N$ such that $f(p)\neq 0$ and $f$ vanishes on the affine variety $\textnormal{LCS} (X) \cap X_1$. The restriction of $f$ to $X$ is a meromorphic function with poles along $G_1$. By choosing sufficiently large $k\in \mathbb{Z}^+$,  $\sigma_2= f \sigma_1^k$ is a holomorphic section of a line bundle associated to the Cartier divisor $G_2= km_1 G_1$ over $X$ such that  $\sigma_2$ vanishes on $\textnormal{LCS}(X)$ and $\sigma_2(p)\neq 0$.   For sufficiently large $m_2\in \mathbb{Z}^+$, $m_2 K_X  - G_2 $ is an ample Cartier divisor. So there exist $m_3\in \mathbb{Z}^+$ and  a holomorphic section $\sigma_3 \in H^0(X, m_3(m_2 K_X- G_2 ))$ such that $\sigma_3(p)\neq 0$. Finally we let $mG_p$ be the divisor of zeros of  $ \sigma_2^{m_3} \sigma_3 $ and the lemma is proved.

\end{proof}

For any $q\in \textnormal{LCS}(X)$,  we can also find $G_{p_1}, ..., G_{p_k}$, by the construction of $G_p$ in Lemma \ref{effdiv}, such that  the common zeros of $G_{p_1}, ..., G_{p_k}$ in an open neighborhood $U$ of $q$ coincide with $\textnormal{LCS}(X)\cap U$.  Without loss of generality, we can consider the projective embedding $\Phi: X \rightarrow \mathbb{CP}^N$ by the pluricanonical system $H^0(X, mK_X) $ for some sufficiently large $m$ defined by $\Phi(x) = [ \eta_0(x), ..., \eta_N(x)] \in \mathbb{CP}^N$ for a basis $\{\eta_j \}_{j=0}^N$ of $H^0(X, mK_X)$. We then choose the adapted volume measure $\Omega$ on $X$ by 
$$\Omega = \left( \sum_{j=0}^N \eta_j \overline{\eta_j} \right)^{1/m}. $$
We also define the curvature of $\Omega$ by
$$\chi = \ddbar \log \Omega.$$
In fact, we can define $h_\Omega = \left( \sum_{j=0}^N \eta_j \wedge \overline{\eta_j} \right)^{-1/m}$ as a hermtian metric on $K_X$.  Obviously, $m\chi$ is the restriction of the Fubini-Study metric of $\mathbb{CP}^N$ to $X$.

The complex Monge-Amp\`ere equation of our interest in relation to the K\"ahler-Einstein equation on $X$ is given by 
\begin{equation} \label{ocmp}
(\chi + \ddbar \varphi)^n = e^\varphi \Omega. 
\end{equation}
We have to prescribe singularities of the solution $\varphi$ to obtain a canonical and unique K\"ahler-Einstein current on $X$. To do so, we   lift all the data to a log resolution $\pi: Y \rightarrow X$. 
By definition of semi-log canonical singularities,
 $$K_Y = \pi^* K_X + \sum_i a_i E_i - \sum_j b_j F_j, ~a_i\geq 0, ~ 0< b_j \leq 1. $$

We approximate  equation (\ref{ocmp})  in  the following way. We pull back all the data from $X$ to $Y$. Let $\sigma_{E}$ be the defining section for $E=\sum_{i=1}^I a_i E_i$ and $\sigma_F$ be the defining section for $F=\sum_{j=1}^J  b_j F_j$ (possibly multivalued). We  equip the line bundles associated to $E$ and $F$ with smooth hermitian metric $h_E$, $h_F$ on $Y$. Let $\Omega_Y$ be a smooth strictly positive volume form on $Y$ defined by 
$$ \Omega_Y  = ( |\sigma_E|^2_{h_E} )^{-1} |\sigma_F|^2_{h_F} \Omega. $$

Let $\theta$ be a fixed smooth K\"ahler form on $Y$ and we consider the following family of complex Monge-Amp\`ere equations on $Y$ for $s \in (0, 1)$, 
\begin{equation}\label{appcmp}
(\chi + s\theta + \ddbar \psi_s)^n = e^{\psi_s} (|\sigma_E|^2_{h_E}+ s)  (|\sigma_F|^2_{h_F}+ s)^{-1}\Omega_Y. 
\end{equation}
For each $0<s<1$, by the result of \cite{A, Y1}, there exists a unique smooth solution $\psi_s$ solving equation (\ref{appcmp}). When $s=0$, equation (\ref{appcmp}) coincides with equation (\ref{ocmp}).

We introduce two more parameter $\delta$ and $\epsilon$ in order to apply the maximum principle and consider the following family of complex Monge-Amp\`ere  equations
\begin{equation}\label{appcmp2}
\left((1+\delta) \chi  + s\theta+ \ddbar\psi_{p, s, \delta, \epsilon} \right)^n = \frac{ e^{\psi_{p, s, \delta, \epsilon}  } (|\sigma_{G_p}|_{h_\Omega}^{2\epsilon} + s) (|\sigma_E|^2_{h_E}+ s)  }{ (|\sigma_F|^2_{h_F}+ s)} \Omega_Y. 
\end{equation}
Here both $\delta$ and $\epsilon$ are sufficiently small and we require $\epsilon>0$. 
\begin{lemma} \label{0logp}
For any $p\in  X \setminus \textnormal{LCS}(X)$ and any  $\epsilon_0>0$, there exist $\delta_0>0$, $C>0$ and $C'=C'(p, \epsilon_0)>0$ such that for any $-\delta_0 \leq \delta \leq \delta_0$, $0<s<1$, and $0< \epsilon <\epsilon_0/2$, the solution $\psi_{p, s, \delta, \epsilon}$ of equation (\ref{appcmp2}) satisfies the following estimate on $Y$, 
\begin{equation}
\epsilon_0 \log |\sigma_{G_p}|_{h_\Omega}^2 - C' \leq  \psi_{p, s, \delta, \epsilon}  \leq   C. 
\end{equation} 
\end{lemma}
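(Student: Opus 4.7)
The approach is to apply the maximum principle to $\psi := \psi_{p,s,\delta,\epsilon}$ itself for the upper bound, and to the barrier-corrected function $u := \psi - \epsilon_0 \log|\sigma_{G_p}|^2_{h_\Omega}$ for the lower bound. After scaling $h_\Omega$ so that $\log|\sigma_{G_p}|^2_{h_\Omega} \leq 0$ on $Y$, the estimate for a smaller $\epsilon_0' < \epsilon_0$ implies the statement for $\epsilon_0$, so I reduce to $\epsilon_0 < 1/2$ and choose $\delta_0 = \delta_0(\epsilon_0)$ small enough that $1+\delta-\epsilon_0 > 1/2$ for $|\delta|\leq\delta_0$.

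For the upper bound, at a maximum point $q$ of $\psi$ the hessian inequality $\ddbar\psi(q) \leq 0$ together with (\ref{appcmp2}) yields
$$e^{\psi(q)} \cdot \frac{(|\sigma_{G_p}|^{2\epsilon}+s)(|\sigma_E|^2+s)}{|\sigma_F|^2+s}\Omega_Y(q) \leq ((1+\delta)\chi+s\theta)^n(q) \leq C\theta^n(q),$$
uniformly in $s,\delta,\epsilon$ (since $\chi$ and $\theta$ are fixed smooth forms on $Y$). The pointwise estimate alone is insufficient because the density can be small, but coupling it with the integral identity $\int_Y\omega_\psi^n = [\omega_s]^n$ and applying the $L^\infty$-estimate of Ko\l odziej--Eyssidieux--Guedj--Zeriahi--Zhang to the $\omega_s$-plurisubharmonic function $\psi-\sup_Y\psi$ produces $\sup_Y\psi\leq C$ independently of $s,\delta,\epsilon$. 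The necessary uniform $L^p$ bound ($p>1$) on the right-hand density follows from the cancellation $\Omega_Y=|\sigma_E|_{h_E}^{-2}|\sigma_F|_{h_F}^2\Omega$ together with the log-terminality of $X\setminus\textnormal{LCS}(X)$.

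For the lower bound, since $\psi$ is smooth on $Y$ (as $s>0$) while $\log|\sigma_{G_p}|^2_{h_\Omega}\to -\infty$ along $\pi^{-1}(G_p)$, the function $u$ attains its minimum at some $q_0\in Y\setminus\pi^{-1}(G_p)$. Because $G_p$ is numerically equivalent to $K_X$ and $h_\Omega$ has Chern curvature $\chi$, Poincar\'e--Lelong gives $\ddbar\log|\sigma_{G_p}|^2_{h_\Omega}=-\chi$ off $\pi^{-1}(G_p)$. The inequality $\ddbar u(q_0)\geq 0$ then yields $\ddbar\psi(q_0)\geq -\epsilon_0\,\chi(q_0)$, so
$$(1+\delta)\chi + s\theta + \ddbar\psi \;\geq\; (1+\delta-\epsilon_0)\chi+s\theta \;\geq\; s\theta \;>\; 0 \quad\text{at } q_0.$$
Taking $n$-th powers, substituting into (\ref{appcmp2}), and subtracting $\epsilon_0\log|\sigma_{G_p}|^2_{h_\Omega}(q_0)$ gives
$$u(q_0) \;\geq\; \log\frac{((1+\delta-\epsilon_0)\chi+s\theta)^n\,(|\sigma_F|^2+s)}{|\sigma_{G_p}|^{2\epsilon_0}(|\sigma_{G_p}|^{2\epsilon}+s)(|\sigma_E|^2+s)\,\Omega_Y}\bigg|_{q_0}.$$
The role of the assumption $\epsilon<\epsilon_0/2$ is exactly to make $|\sigma_{G_p}|^{2\epsilon_0}/(|\sigma_{G_p}|^{2\epsilon}+s)$ uniformly bounded on $Y$ (dominated by $|\sigma_{G_p}|^{2(\epsilon_0-\epsilon)}$ near $G_p$ and by a constant away from $G_p$), so the $G_p$-factors contribute only a bounded term.

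The main obstacle is controlling the remaining ratio $((1+\delta-\epsilon_0)\chi+s\theta)^n(|\sigma_F|^2+s)/((|\sigma_E|^2+s)\Omega_Y)$ uniformly from below, independently of $s$. On the Zariski open set where $\chi$ is strictly positive this is immediate; the delicate region is along the exceptional divisors of $\pi:Y\to X$, where $\chi$ degenerates and the leading lower bound is only $s^n\theta^n$. Here one must exploit the compensating smallness of $|\sigma_E|^2+s$ (respectively $|\sigma_F|^2+s$) relative to $\Omega_Y=|\sigma_E|_{h_E}^{-2}|\sigma_F|_{h_F}^2\Omega$ and the orders of vanishing along each exceptional stratum. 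This is handled by a local analysis near each divisorial stratum, or equivalently by adding to $u$ a small bounded correction of the form $c\log(|\sigma_E|^2+s)$ with $c$ much smaller than the minimal discrepancy $a_i$, which forces the minimum of the modified function into the regular region where $\chi$ is nondegenerate. The resulting bound $u(q_0)\geq -C'(p,\epsilon_0)$ then completes the lower estimate.
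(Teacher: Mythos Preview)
Your approach has a genuine gap in both directions, and the missing idea is the same in each case.

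For the upper bound, the appeal to the Ko\l odziej--EGZ--Zhang estimate requires a uniform $L^{1+\eta}$ bound on the density $\frac{(|\sigma_{G_p}|^{2\epsilon}+s)(|\sigma_E|^2+s)}{|\sigma_F|^2+s}$ with respect to $\Omega_Y$, uniform in \emph{all} parameters including $\epsilon$. This fails as $\epsilon\to 0$: the divisor $G_p$ does contain $\textnormal{LCS}(X)$, so $\sigma_{G_p}$ vanishes along every $F_j$, but only to some fixed order $c_j>0$, and the factor $|\sigma_{G_p}|^{2\epsilon}$ then vanishes along $F_j$ only to order $c_j\epsilon$. When $b_j=1$ (which occurs precisely over $\textnormal{LCS}(X)$) the local model of the density is $|z|^{2c_j\epsilon-2}$, whose $L^{1+\eta}$ norm blows up as $\epsilon\to 0$ for any fixed $\eta>0$. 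So no uniform $L^p$ bound is available, and your invocation of log-terminality of $X\setminus\textnormal{LCS}(X)$ does not address the $F$-locus at all.

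For the lower bound, the direct barrier $u=\psi-\epsilon_0\log|\sigma_{G_p}|^2$ leads you to need a uniform lower bound on $((1+\delta-\epsilon_0)\chi+s\theta)^n/((|\sigma_E|^2+s)\Omega_Y)$ at the unknown minimum point $q_0$. Since $\chi=\pi^*\chi_X$ is only semipositive on $Y$ and degenerates to order $\geq 1$ along each exceptional component, the numerator is of order $s^k$ near a $k$-fold exceptional intersection while the denominator is only of order $s$, so the ratio is not bounded below uniformly in $s$. Your proposed correction $c\log(|\sigma_E|^2+s)$ is not ``bounded'': it ranges down to $c\log s$, so adding it destroys the uniformity you are trying to prove.

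The paper avoids both problems by a two-step scheme. First one \emph{freezes} $\epsilon=\epsilon_0$ and applies Ko\l odziej--EGZ--Zhang to that single equation; here the density does have a uniform $L^{1+\eta}$ bound because $|\sigma_{G_p}|^{2\epsilon_0}$ vanishes to a fixed positive order along $F$, yielding $\|\psi_{p,s,\delta,\epsilon_0}\|_{L^\infty(Y)}\leq C_1$ for all $s\in(0,1)$ and small $|\delta|$. Second, one compares $\psi_{p,s,\delta,\epsilon}$ not to the explicit function $\epsilon_0\log|\sigma_{G_p}|^2$ but to the \emph{solution} $\psi_{p,s,\mp\delta_0,\epsilon_0}$, applying the maximum principle to $\phi=\psi_{p,s,\delta,\epsilon}-\psi_{p,s,-\delta_0,\epsilon_0}-\epsilon_0\log|\sigma_{G_p}|^2$ (and to $\phi'=\psi_{p,s,\delta,\epsilon}-\psi_{p,s,\delta_0,\epsilon_0}$ for the upper bound). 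The point is that the comparison now takes place between two Monge--Amp\`ere measures sharing the \emph{same} degenerate background $(1+\delta')\chi+s\theta$, so the degeneracy of $\chi$ along the exceptional locus cancels in the ratio and the maximum principle yields a clean bound. The extra parameter $\delta$ (with $\delta_0\geq 3\epsilon_0$) is there precisely to absorb the curvature shift $-\epsilon_0\chi$ coming from the log barrier while keeping the coefficient of $\chi$ nonnegative.
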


\begin{proof}  

We will pick an effective $\mathbb{Q}$-Cartier divisor $G_p$ from Lemma \ref{effdiv}. Since $G_p$ is a $\mathbb{Q}$-Cartier divisor numerically equivalent to the ample divisor $K_X$, we will choose $h_\Omega$ as the hermitian metric for $G_p$ and we let $\sigma_{G_p}$ be the defining section of $G_p$.  
 We first fix a sufficiently small $\delta_0 \geq 3 \epsilon_0>0$ and consider the following family of equations on $Y$
\begin{equation}\label{appp}
\left( (1+\delta) \chi + s\theta + \ddbar \psi_{p, s, \delta, \epsilon_0} \right)^n = \frac{ e^{\psi_{p, s, \delta,\epsilon_0}} (|\sigma_{G_p}|_{h_\Omega}^{2\epsilon_0} + s) (|\sigma_E|^2_{h_E}+ s) }{ |\sigma_F|^2_{h_F}+ s}\Omega_Y,
\end{equation}
where $-\delta_0 \leq \delta \leq \delta_0$. 

Since $G_p$ vanishes along $F$, there exist $\eta=\eta(\epsilon_0)>0$ and $K=K(\epsilon_0, \delta_0)>0$ such that for all $0<s<1$, we have
$$ \left|\left|\frac{ (|\sigma_{G_p}|_{h_\Omega}^{2\epsilon_0} + s) (|\sigma_E|^2_{h_E}+ s) }{ |\sigma_F|^2_{h_F}+ s} \right|\right|_{L^{1+\eta}(Y, \Omega_Y)} \leq K. $$
Then the results of \cite{Kol1, EGZ, Z} imply that there exists $C_1=C_1(p, \delta_0, \epsilon_0)>0$ such that for all $3|\delta|\leq \delta_0$, $0<s<1$, 
$$||\psi_{p, s, \delta, \epsilon_0}||_{L^\infty(Y)} \leq C_1. $$

 Now we will  compare $\psi_{p, s, \delta, \epsilon}$ to $\psi_{p, s, \delta', \epsilon_0}$ by applying the maximum principle. Let $$\phi = \psi_{p, s, \delta, \epsilon} - \psi_{p, s, \delta', \epsilon_0} - \epsilon_0 \log |\sigma_{G_p}|_{h_\Omega}^2. $$  
 Then $\phi$ satisfies the following equation
\begin{equation}\label{appmac}
 \frac{ \left( (1+ \delta')\chi + s\theta + \ddbar \psi_{p, s, \delta', \epsilon_0}  +  (\delta- \delta'-\epsilon_0) \chi  + \ddbar \phi  \right)^n }{ \left( (1+ \delta')\chi + s\theta + \ddbar \psi_{p, s, \delta', \epsilon_0}    \right)^n } = e^{\phi}   \left( \frac{|\sigma_{G_p}|_{h_\Omega}^{2\epsilon}+ s}{1+ s|\sigma_{G_p}|_{h_\Omega}^{-2\epsilon_0}} \right). 
 \end{equation}

We choose $\delta' = -\delta_0$ and require $0<\epsilon<\epsilon_0$.  Since $\phi$ is smooth away from the zeros of $G_p$ and $\phi$ tends to $\infty$ near zeros of $G_p$, we are able to apply the maximum principle to the minimum of $\phi$ and  there exists $C_2>0$ such that
$$\inf_{\widetilde{X}} \phi \geq - C_2, $$ 
Since $\psi_{p, s, \delta', \epsilon_0}$ is bounded, there exists $C_3>0$ such that for all $3\delta \in (-\delta_0, \delta_0)$, $0<s<1$ and $\epsilon \in (0, \epsilon_0/2)$, 
$$ \psi_{p, s, \delta, \epsilon} \geq - C_3 + \epsilon_0  \log |\sigma_{G_p}|_{h_\Omega}^2. $$

It is easier to obtain the upper bound of $\psi_{p, s, \delta, \epsilon}$. We consider the quantity $$\phi' = \psi_{p, s, \delta, \epsilon} - \psi_{p, s, \delta', \epsilon_0} $$
and use similar argument by choosing $\delta' = \delta_0$.

\end{proof}

Next, we will prove  second order estimates with bounds from suitable barrier functions. There exists an effective Cartier divisor $D$ on $Y$ such that for any sufficiently small $s>0$,
\begin{equation} \label{tsuji}
\pi^* K_X - s D
\end{equation}
is ample. In particular, we can assume that the support of $D$ coincides with the support of the exceptional divisor because $K_X$ is ample on $X$. We let $\sigma_D$ be the defining section of $D$ and choose a smooth hermitian metric $h_D$ on the line bundle associated to $D$ such that for any sufficiently small $s>0$, 
\begin{equation}\label{tsuji1}
\chi - s Ric(h_D) >0 .
\end{equation}

\begin{lemma}\label{2ndem} For any $p\in X\setminus \textnormal{LCS}(X)$, there exist $
\lambda$, $\delta_1$, $\epsilon_1>0$ and $C= C(\delta_1, \epsilon_1)>0$ such that for all $-\delta_1< \delta < \delta_1$, $0<\epsilon<\epsilon_1$ and $0<s<1$,  
\begin{equation}
\sup_Y \left(  |\sigma|^2_{h_D} |\sigma_{G_p}|^2_{h_\Omega}) \right)^{2\lambda} \left( \Delta_\theta \psi_{p, s, \delta, \epsilon} \right) \leq C, 
\end{equation}
where $\Delta_\theta$ is the Laplace operator with respect to the K\"ahler metric $\theta$.

\end{lemma}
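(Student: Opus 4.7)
The plan is to apply the maximum principle to a test function of the form
\begin{equation*}
H := \log(\tr_\theta \omega_s) + 2\lambda \log\bigl(|\sigma_D|^2_{h_D}|\sigma_{G_p}|^2_{h_\Omega}\bigr) - A \, \psi_{p,s,\delta,\epsilon},
\end{equation*}
where $\omega_s := (1+\delta)\chi + s\theta + \ddbar \psi_{p,s,\delta,\epsilon}$ is the metric solving (\ref{appcmp2}) and $A, \lambda > 0$ are constants to be determined. The barrier divisor $D$ of (\ref{tsuji}) together with the hermitian metric $h_D$ satisfying Tsuji's positivity (\ref{tsuji1}) is designed to compensate for the degeneration of $\pi^*\chi$ along the exceptional divisor of $\pi: Y \to X$; the extra factor $|\sigma_{G_p}|^2_{h_\Omega}$ plays a dual role by handling the mild logarithmic singularity of $\psi_{p,s,\delta,\epsilon}$ near $G_p$.

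The first step is to verify that $\sup_Y H$ is attained in the open locus where $\psi_{p,s,\delta,\epsilon}$ satisfies the bounds of Lemma \ref{0logp}. Near $\textnormal{supp}(D)$ one has $2\lambda\log|\sigma_D|^2_{h_D}\to -\infty$ while all other terms are bounded, so $H\to -\infty$. Near $\textnormal{supp}(G_p)$, the lower bound $\psi_{p,s,\delta,\epsilon} \geq \epsilon_0 \log|\sigma_{G_p}|^2_{h_\Omega} - C'$ from Lemma \ref{0logp} gives
\begin{equation*}
H \leq \log(\tr_\theta\omega_s) + 2\lambda\log|\sigma_D|^2_{h_D} + (2\lambda - A\epsilon_0)\log|\sigma_{G_p}|^2_{h_\Omega} + AC',
\end{equation*}
so choosing $A, \lambda$ with $2\lambda > A\epsilon_0$ forces $H \to -\infty$ along $G_p$ as well. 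Hence $\sup_Y H$ is attained at some interior point $y_0$ where $\psi_{p,s,\delta,\epsilon}$ is smooth.

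At $y_0$ the standard Chern-Lu / Aubin-Yau second-order estimate gives
\begin{equation*}
\Delta_{\omega_s}\log\tr_\theta\omega_s \geq \frac{\Delta_\theta\log(\omega_s^n/\theta^n)}{\tr_\theta\omega_s} - C_0 \tr_{\omega_s}\theta,
\end{equation*}
with $C_0$ depending only on the bisectional curvature of $\theta$. Using (\ref{appcmp2}) to expand $\log(\omega_s^n/\theta^n)$ and the pointwise bound $\ddbar\log(|\sigma|^2_h + s) \geq -C\theta$ uniformly in $s\in(0,1)$ (verified in local coordinates from the model $\ddbar\log(|z|^2+s)\geq 0$) for each of the three regularized factors on the right-hand side of (\ref{appcmp2}), one obtains $\Delta_\theta\log(\omega_s^n/\theta^n)\geq\Delta_\theta\psi_{p,s,\delta,\epsilon} - C_1$. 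Substituting the identity $-A\Delta_{\omega_s}\psi_{p,s,\delta,\epsilon} = A\tr_{\omega_s}((1+\delta)\chi + s\theta) - An$ and using Tsuji's positivity (\ref{tsuji1}) to absorb $-2\lambda\tr_{\omega_s}\textnormal{Ric}(h_D)$ into $A\tr_{\omega_s}\chi$ (which requires $A \geq 2\lambda/s_0$ for the parameter $s_0$ of (\ref{tsuji1})), the inequality $0 \geq \Delta_{\omega_s}H(y_0)$ collapses to a uniform upper bound $\tr_{\omega_s}\theta(y_0) \leq C_4$. Feeding this back into the Monge-Amp\`ere equation (\ref{appcmp2}), together with the upper bound on $\psi_{p,s,\delta,\epsilon}$ from Lemma \ref{0logp} and an elementary arithmetic-geometric inequality of the form $\tr_\theta\omega \leq C_n(\tr_\omega\theta)^{n-1}(\omega^n/\theta^n)$, gives a uniform upper bound on $\tr_\theta\omega_s(y_0)$. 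Propagating $H(y_0) = \sup_Y H$ through the uniform bounds on $\psi_{p,s,\delta,\epsilon}$ yields the claimed weighted $L^\infty$ bound for $\tr_\theta\omega_s$, and since $\Delta_\theta\psi_{p,s,\delta,\epsilon} = \tr_\theta\omega_s - \tr_\theta((1+\delta)\chi + s\theta)$ differs from $\tr_\theta\omega_s$ by a smooth bounded quantity, the lemma follows.

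The principal technical difficulty is the $s$-uniform inequality $\ddbar\log(|\sigma_F|^2_{h_F}+s) \geq -C\theta$ for the regularized denominator of (\ref{appcmp2}); without such a bound, $\Delta_\theta\log(\omega_s^n/\theta^n)$ would blow up along $F$ as $s\to 0$ and the Chern-Lu argument would fail. A secondary subtlety is the simultaneous constraints $2\lambda > A\epsilon_0$ and $A\geq 2\lambda/s_0$, which together force $\epsilon_0 < s_0$ and thereby determine how small $\epsilon_1$ must be chosen; this is precisely the reason that the estimate is necessarily weighted by a positive power of $|\sigma_{G_p}|^2_{h_\Omega}$ near the non-log-terminal locus rather than being a global unweighted $L^\infty$-bound.
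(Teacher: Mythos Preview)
Your overall plan matches the paper's: a test function combining $\log\tr_\theta\omega_s$ with barriers $\log|\sigma_D|^2_{h_D}$, $\log|\sigma_{G_p}|^2_{h_\Omega}$ and a negative multiple of $\psi_{p,s,\delta,\epsilon}$, followed by the maximum principle and the arithmetic--geometric inequality together with the Monge--Amp\`ere equation. The paper takes coefficients $(-A^3,A^2,A)$ rather than your $(-A,2\lambda,2\lambda)$, but that is cosmetic.

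There is, however, a genuine gap in your treatment of the factor $(|\sigma_F|^2_{h_F}+s)^{-1}$. You assert that the uniform lower bound $\ddbar\log(|\sigma|^2_h+s)\geq -C\theta$, applied to each of the three regularized factors, yields $\Delta_\theta\log(\omega_s^n/\theta^n)\geq\Delta_\theta\psi_{p,s,\delta,\epsilon}-C_1$. That lower bound is indeed true; but $(|\sigma_F|^2_{h_F}+s)$ sits in the \emph{denominator}, so its contribution to $\log(\omega_s^n/\theta^n)$ carries a minus sign, and what you would actually need is the \emph{upper} bound $\ddbar\log(|\sigma_F|^2_{h_F}+s)\leq C\theta$. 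This is false uniformly in $s$: already for the model $|z|^{2b}+s$ one computes
\[
\ddbar\log\bigl(|z|^{2b}+s\bigr)=\frac{b^2\,s\,|z|^{2b-2}}{(|z|^{2b}+s)^2}\,\sqrt{-1}\,dz\wedge d\bar z,
\]
which blows up along $\{z=0\}$ as $s\to 0$. Your closing paragraph flags the $F$-term as the crux but records the needed inequality in the wrong direction; the one you wrote is true and irrelevant for a denominator, and the one actually required fails. (A smaller point: for fixed $s>0$ the solution $\psi_{p,s,\delta,\epsilon}$ is globally smooth on $Y$, so $H\to-\infty$ near $D\cup G_p$ automatically; the relation between $\lambda$, $A$ and $\epsilon_0$ is not needed to locate the maximum, only later to extract the final bound from $H(y_0)$.)

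The paper does not attempt a pointwise lower bound for $\Delta_\theta\log(\omega_s^n/\theta^n)$. Instead it exploits that each $F_j$ is exceptional for $\pi$ and that $\textnormal{supp}(D)$ was chosen to be the full exceptional locus, so $|\sigma_F|^2_{h_F}\geq c\,|\sigma_D|^{2M}_{h_D}$ on $Y$ for some $M>0$; hence $-\log(|\sigma_F|^2_{h_F}+s)+A^2\log|\sigma_D|^2_{h_D}\leq C$ uniformly in $s$ once $A$ is large. After the maximum principle gives $\tr_{\omega_s}\theta\leq C$ at the maximum point, the AM--GM inequality and equation \eqref{appcmp2} give $\log\tr_\theta\omega_s\leq C+\psi_{p,s,\delta,\epsilon}-\log(|\sigma_F|^2_{h_F}+s)$ there, the dangerous $-\log(|\sigma_F|^2_{h_F}+s)$ is absorbed by the $A^2\log|\sigma_D|^2_{h_D}$ already present in $H$, and the remainder is controlled by Lemma~\ref{0logp} with $\epsilon_0$ of order $A^{-2}$ (whence the paper's requirement $\epsilon\ll A^{-1}$). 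In short, the $F$-singularity is neutralized at the level of $H$ via the $D$-barrier \emph{after} the maximum principle, not inside the Laplacian inequality.
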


\begin{proof} 

Let $\omega = (1+\delta) \chi + s\theta + \ddbar \psi_{p, s, \delta, \epsilon}$. Then we consider the quantity $$H= \log tr_\theta (\omega) - A^3\psi_{p, s, \delta, \epsilon} + A^2 \log |\sigma|^2_{h_D} + A \log |\sigma_{G_p}|^2_{h_\Omega}$$
for some sufficiently large $A>0$ to be determined. Straightforward calculations show that there exists $C>0$ such that for all $\delta\in (-\delta_1,  \delta_1)$, $\epsilon\in (0, \epsilon_1)$ and $0<s<1$, 
\begin{eqnarray*}
\Delta_\omega H &\geq &  tr_\omega( (A^3(1+\delta) - A)\chi -  A^2 Ric(h_D)  - (C - A^3s)\theta  -  n A^3\\
& \geq&  Atr_\omega (\theta) - n A^3.
\end{eqnarray*}
We remark that we view $\chi$ as the pullback of the Fubini-Study metric from the projective embedding of $X$ and so the curvature of $\chi$ is uniformly bounded on the projective space. 
Applying the maximum principle, at the minimal point $x_{min}$ of $H$,
$$tr_\omega(\theta)  \leq n A^2. $$ 
Using the geometric mean value inequality combined with the Monge-Amp\`ere equation (\ref{appcmp2}), there exists $C=C(A)>0$ such that 
$$H(x_{min}) \leq C - A^3( \psi_{p, s, \delta, \epsilon} - A^{-1} \log |\sigma_{G_p}|^2_{h_\Omega}) (x_{min}). $$
for sufficiently large $A>0$. By Lemma \ref{0logp}, $ \psi_{p, s, \delta, \epsilon} - A^{-1} \log |\sigma_{G_p}|^2_{h_\Omega}$ is bounded below for all sufficiently small $\delta$ and $0<\epsilon<< A^{-1}$. Therefore for all sufficiently small $\epsilon>0$ and $\delta$,  there exists $C>0$ such that  on $Y$, we have
$$tr_\theta(\omega) \leq  C\left(  |\sigma|^2_{h_D} |\sigma_{G_p}|^2_{h_\Omega}) \right)^{-2\lambda}. $$  
This proves the lemma.

\end{proof}

The following lemma on local high regularity of $\psi_{p, s, \delta, \epsilon}$ is established by the standard linear elliptic theory after applying Lemma \ref{2ndem} and linearizing the complex Monge-Amp\`ere equation (\ref{appp}).

\begin{lemma} 
For any $p\in X\setminus \textnormal{LCS}(X)$, $k\geq 0$ and any compact  $K \subset\subset \mathcal{R}_X \setminus  G_p $ , there exist $\delta_2>0$, $\epsilon_2>0$  and $C=C(p, k, K, \delta_2, \epsilon_2)>0$ such that for any $-\delta_2 \leq \delta \leq \delta_2$, $0< \epsilon \leq \epsilon_2$ and  $0<s<1$ 
$$ || \psi_{p, s, \delta, \epsilon} ||_{C^k(K)} \leq C. $$

\end{lemma}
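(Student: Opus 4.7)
The plan is to reduce the claim to a standard interior bootstrap for the complex Monge--Amp\`ere equation (\ref{appcmp2}), using the two previous lemmas as input. Since $\pi : Y \to X$ is a log resolution that is an isomorphism over $\mathcal{R}_X$, any compact $K \subset\subset \mathcal{R}_X \setminus G_p$ lifts to a compact subset of $Y$ (still denoted $K$) that is disjoint from the exceptional divisors $E$, $F$ and from the strict transform of $G_p$.

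On such a $K$, the defining sections $\sigma_E$, $\sigma_F$, $\sigma_{G_p}$ and $\sigma_D$ all have norms bounded above and below by positive constants independent of $s, \delta, \epsilon$ (for $\delta, \epsilon$ in the allowed range). In particular, the right-hand side of (\ref{appcmp2}), namely
$$F_{s,\delta,\epsilon} \;=\; e^{\psi_{p,s,\delta,\epsilon}}\,\frac{(|\sigma_{G_p}|_{h_\Omega}^{2\epsilon}+s)(|\sigma_E|^2_{h_E}+s)}{|\sigma_F|^2_{h_F}+s}\,\Omega_Y\big/\theta^n,$$
is, by the uniform $L^\infty$ bound from Lemma \ref{0logp} together with the pointwise lower bounds on $|\sigma_{G_p}|$ and $|\sigma_F|$ on $K$, uniformly bounded above and bounded below away from zero on $K$. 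Moreover its derivatives of every order are uniformly controlled on $K$ in terms of $\|\psi_{p,s,\delta,\epsilon}\|_{C^{k-1}(K)}$.

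Now the second--order estimate of Lemma \ref{2ndem}, restricted to $K$, yields a uniform upper bound for $\Delta_\theta \psi_{p,s,\delta,\epsilon}$ on $K$, because the barrier $(|\sigma_D|^2_{h_D}|\sigma_{G_p}|^2_{h_\Omega})^{2\lambda}$ is bounded below on $K$. Combined with $(1+\delta)\chi + s\theta + \ddbar \psi_{p,s,\delta,\epsilon} > 0$, this gives a two--sided uniform bound on the eigenvalues of $\omega := (1+\delta)\chi + s\theta + \ddbar \psi_{p,s,\delta,\epsilon}$ with respect to $\theta$: the upper bound comes directly from $\operatorname{tr}_\theta \omega \le C$, while the lower bound follows from the Monge--Amp\`ere equation $\omega^n = F_{s,\delta,\epsilon}\, \theta^n$ once we know $F_{s,\delta,\epsilon}$ is bounded below on $K$ and $\operatorname{tr}_\theta \omega$ is bounded above. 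Hence the complex Monge--Amp\`ere equation is uniformly elliptic on $K$.

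The remainder is routine: apply the Evans--Krylov $C^{2,\alpha}$ interior estimate for uniformly elliptic concave fully nonlinear equations to obtain a uniform $C^{2,\alpha}(K')$ bound on a slightly smaller compact $K' \subset\subset K$, and then differentiate the linearised equation and invoke Schauder theory iteratively to upgrade this to a uniform $C^{k,\alpha}(K'')$ bound for any $k$. Since $K$ was arbitrary, shrinking at each bootstrap step is harmless. The potential obstacle to watch is that $\chi$ is only the pullback of a Fubini--Study form through the (possibly non-embedding) map $\pi^*\Phi$, but over $K \subset \mathcal{R}_X \setminus G_p$ the pullback is smooth with bounded derivatives, so this causes no difficulty.
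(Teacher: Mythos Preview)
Your proposal is correct and follows essentially the same approach as the paper: the paper's proof is a single sentence invoking Lemma~\ref{2ndem} and ``standard linear elliptic theory after linearizing the complex Monge--Amp\`ere equation,'' and you have simply spelled out what this means (two--sided eigenvalue bounds on $K$ from the Laplacian estimate plus the volume--form lower bound, then Evans--Krylov for $C^{2,\alpha}$, then Schauder bootstrap). Your added observations---that the lift of $K$ avoids $E$, $F$, $D$, and $G_p$, so all the barrier sections are harmless there---are exactly the points the paper leaves implicit.
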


Before we take $\delta, \epsilon , s \rightarrow 0$, we derive a uniform estimate with respect to variations by the parameters $\delta$, $\epsilon, $ and $t$.

\begin{lemma}  \label{differ}
For any $p\in X\setminus \textnormal{LCS}(X)$, $k\geq 0$, any compact  $K \subset\subset \mathcal{R}_X \setminus  G_p $ , there exist $\delta_3>0$, $\epsilon_3>0$  and $C=C(p, K, \delta_3, \epsilon_3)>0$ such that for any $-\delta_3 \leq \delta \leq \delta_3$, $0< \epsilon \leq \epsilon_3$ and  $0<s<1$, we have
\begin{equation}\label{1derest}
 \left| \frac{\partial \psi_{p, s, \delta, \epsilon}}{\partial \delta} \right|_{L^\infty(K)} + \left| \frac{\partial \psi_{p, s, \delta, \epsilon}}{\partial \epsilon} \right|_{L^\infty(K)}  + \left|  \frac{\partial \psi_{p, s, \delta, \epsilon}}{\partial s} \right|_{L^\infty(K)} \leq C. 
 \end{equation}
 
\end{lemma}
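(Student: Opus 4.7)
The plan is to differentiate the Monge--Amp\`ere equation (\ref{appp}) with respect to each of the three parameters and then estimate the parameter derivatives via the maximum principle applied to the resulting linearized equations, in the spirit of Lemma \ref{0logp}.

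Setting $\omega = (1+\delta)\chi + s\theta + \ddbar\psi_{p,s,\delta,\epsilon}$, differentiating the logarithm of (\ref{appp}) in $\delta$, $\epsilon$, $s$ produces three linear equations of the form $(\Delta_\omega - 1)\partial_\ast \psi = g_\ast$, namely
\begin{eqnarray*}
(\Delta_\omega - 1)\partial_\delta \psi & = & -\textnormal{tr}_\omega \chi, \\
(\Delta_\omega - 1)\partial_\epsilon \psi & = & \frac{|\sigma_{G_p}|^{2\epsilon}_{h_\Omega}\log|\sigma_{G_p}|^2_{h_\Omega}}{|\sigma_{G_p}|^{2\epsilon}_{h_\Omega}+s}, \\
(\Delta_\omega - 1)\partial_s \psi & = & -\textnormal{tr}_\omega \theta + \frac{1}{|\sigma_{G_p}|^{2\epsilon}_{h_\Omega}+s} + \frac{1}{|\sigma_E|^2_{h_E}+s} - \frac{1}{|\sigma_F|^2_{h_F}+s}.
\end{eqnarray*}
The existence of the parameter derivatives is justified by the implicit function theorem: for each fixed $(p,s,\delta,\epsilon)$ with $s>0$, the linearized operator $\Delta_\omega - 1$ is an isomorphism on H\"older spaces over the compact manifold $Y$, by the standard maximum principle on closed manifolds.

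To bound $\partial_\ast \psi$ in $L^\infty(K)$, I would apply the maximum principle on $Y$ to the test function
$$
\Phi_\pm = \pm \partial_\ast \psi - A\bigl(\psi_{p,s,\delta,\epsilon} - \epsilon_0 \log|\sigma_{G_p}|^2_{h_\Omega}\bigr) + B_\ast \log\bigl(|\sigma_F|^2_{h_F}+s\bigr),
$$
with $A>0$ a large constant, $\epsilon_0$ as in Lemma \ref{0logp}, and $B_\ast$ taken nonzero only for the $s$-derivative. Using the identity $\Delta_\omega \psi = n - (1+\delta)\textnormal{tr}_\omega \chi - s\textnormal{tr}_\omega \theta$ together with $\ddbar \log|\sigma_{G_p}|^2_{h_\Omega} = -\chi$, the ``anti-trace'' contribution of $-A(\psi - \epsilon_0 \log|\sigma_{G_p}|^2_{h_\Omega})$ yields in $(\Delta_\omega-1)\Phi_\pm$ a positive multiple of $\textnormal{tr}_\omega \chi$ and of $s\,\textnormal{tr}_\omega \theta$; for $A$ sufficiently large these dominate the corresponding unbounded trace terms appearing in $g_\ast$. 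The $B_s\log(|\sigma_F|^2_{h_F}+s)$ barrier in the $s$-derivative case neutralizes the singular source $-1/(|\sigma_F|^2_{h_F}+s)$, exactly as in the regularization step preceding Lemma \ref{0logp}. Applying the maximum principle to $\Phi_\pm$ and combining with the uniform $L^\infty(Y)$ bound on $\psi_{p,s,\delta,\epsilon} - \epsilon_0\log|\sigma_{G_p}|^2_{h_\Omega}$ from Lemma \ref{0logp}, and finally restricting to $K \subset\subset \mathcal{R}_X \setminus G_p$ where $|\sigma_{G_p}|^2_{h_\Omega}$ is bounded below, yields the desired estimate (\ref{1derest}).

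The main obstacle is that the source terms $g_\ast$ are not globally bounded in $L^\infty(Y)$: the traces $\textnormal{tr}_\omega \chi$ and $\textnormal{tr}_\omega \theta$ blow up where $\omega$ degenerates along the exceptional divisors $E\cup F$, and $1/(|\sigma_F|^2_{h_F}+s)$ is unbounded for small $s$ near $F$. Both obstructions are resolved by the barrier strategy borrowed from Lemma \ref{0logp}: the Laplacian identity above turns the barrier $A(\psi - \epsilon_0\log|\sigma_{G_p}|^2_{h_\Omega})$ into a source proportional to $\textnormal{tr}_\omega \chi$ that cancels the unbounded traces, while the logarithmic correction $\log(|\sigma_F|^2_{h_F}+s)$ cancels the singular fraction through $\ddbar \log(|\sigma_F|^2_{h_F}+s) = -\Theta_{h_F} + O((|\sigma_F|^2_{h_F}+s)^{-1})$. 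The argument for each of the three parameters is essentially identical apart from the choice of constants $A$ and $B_\ast$.
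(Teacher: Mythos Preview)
Your approach is the same as the paper's: differentiate the Monge--Amp\`ere equation in each parameter to obtain $(\Delta_\omega-1)\partial_\ast\psi=g_\ast$, then bound $\partial_\ast\psi$ via the maximum principle using a barrier built from $\psi$ and $\log|\sigma_{G_p}|^2_{h_\Omega}$. The paper in fact only writes out the $\delta$--derivative explicitly, with the barrier $H=f-10\psi_{p,s,\delta,\epsilon}+\log|\sigma_{G_p}|^2_{h_\Omega}$ (a special case of your $\Phi_\pm$), and then asserts the $\epsilon$-- and $s$--derivatives ``can be achieved similarly''. Your write-up is therefore more detailed than the paper's own proof, and your addition of the $B_s\log(|\sigma_F|^2_{h_F}+s)$ term for the $s$--derivative is a reasonable refinement that the paper does not spell out.

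One point deserves care. In the $s$--derivative the source carries $-\textnormal{tr}_\omega\theta$, while your barrier $-A(\psi-\epsilon_0\log|\sigma_{G_p}|^2_{h_\Omega})$ only contributes $As\,\textnormal{tr}_\omega\theta$ to the Laplacian. Your claim that ``for $A$ sufficiently large these dominate'' therefore does not close uniformly as $s\to 0$: one would need $As\geq 1$, but $A$ must be independent of $s$. The $\textnormal{tr}_\omega\chi$ term does not help either, since $\chi$ degenerates along the exceptional locus on $Y$ and does not control $\theta$ there. The paper glosses over the same point. A clean fix is to augment the barrier with $\lambda\log|\sigma_D|^2_{h_D}$, where $D$ is the Kodaira divisor from~(\ref{tsuji})--(\ref{tsuji1}); since $\chi-s_0Ric(h_D)\geq c_0\theta$ for some fixed $s_0,c_0>0$, the term $\lambda\,\Delta_\omega\log|\sigma_D|^2_{h_D}=-\lambda\,\textnormal{tr}_\omega Ric(h_D)$ produces an honest positive multiple of $\textnormal{tr}_\omega\theta$ (at the cost of a $\textnormal{tr}_\omega\chi$ term, which is then absorbed by taking $A$ larger). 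With this adjustment your argument goes through exactly as written.
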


\begin{proof} By the implicit function theorem, the solutions of (\ref{appp}) must be smooth with respect to the parameters $\delta$, $\epsilon$ and $s$. Let $f= \frac{\partial \psi_{p, s, \delta, \epsilon}}{\partial \delta}$. Then $f\in C^\infty(Y)$ and 
$$\Delta_{p, s, \delta, \epsilon} f= - tr_{\omega_{p, s, \delta, \epsilon}}(\chi) + f, $$
where $\Delta_{p, s, \delta, \epsilon}$ is the Laplace operator associated to the metric %
$$\omega= (1+\delta) \chi + s\theta + \ddbar \varphi_{p, s, \delta, \epsilon} . $$ 
The function  $H = f - 10 \psi_{p, s, \delta, \epsilon}+  \log |\sigma_{G_p}|^2_{h_\Omega} $
satisfies the following equation
$$\Delta_{p, s, \delta, \epsilon} H \geq f - 10n= H + 10 \psi_{p, s, \delta, \epsilon} - \log |\sigma_{G_p}|^2_{h_{\Omega}} - 10n. $$
Then for all sufficiently small $\delta$ and $\epsilon>0$, $H$ is uniformly bounded above and so $f$ is uniformly bounded above on any compact subset in $X\setminus G_p$. Estimates for $ \left| \frac{\partial \psi_{p, s, \delta, \epsilon}}{\partial \epsilon} \right|$ and  
$\left|  \frac{\partial \psi_{p, s, \delta, \epsilon}}{\partial s} \right|$ can be achieved similarly. 

\end{proof}

Now we are able to solve equation (\ref{ocmp}). 

\begin{lemma} \label{existsol} For any $p \in X\setminus \textnormal{LCS}(X)$,   there exists  $\varphi \in PSH(X, \chi)$ satisfying the following conditions. 
\begin{enumerate}

\item $\varphi \in L^\infty_{loc} (X \setminus G_p) \cap C^\infty(\mathcal{R}_X\setminus G_p)$ and $\varphi$ solves   equation (\ref{ocmp}), where $G_p$ is defined in Lemma \ref{effdiv}.
 
 \medskip

\item For any $\epsilon>0$ and $p\in X\setminus \textnormal{LCS}(X)$, there exists $C_{p, \epsilon}>0$ such that 
$$\varphi \geq \epsilon \log |\sigma_{G_p}|^2_{h_\Omega} - C_{p, \epsilon}. $$

\item Let $\omega_{KE}= \chi + \ddbar \varphi$. Then $
Ric(\omega_{KE}) = - \omega_{KE} $ on $\mathcal{R}_X$ and extends to the K\"ahler-Einstein equation as currents to $X$. 

\medskip

\item $\varphi= -\infty$ on $\textnormal{LCS}(X)$.

\medskip

\item $\int_{\mathcal{R}_X \setminus G_p} (\omega_{KE})^n =  [K_X]^n.$ 
\medskip

\item

$\varphi = \lim_{s , \delta, \epsilon\rightarrow 0} \psi_{p, s, \delta, \epsilon}$.

\end{enumerate}

\end{lemma}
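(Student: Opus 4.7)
The plan is to set $\varphi = \lim_{s,\delta,\epsilon \to 0} \psi_{p,s,\delta,\epsilon}$ for the solutions of the regularized Monge--Amp\`ere equation \eqref{appcmp2}, and then to verify properties (1)--(6) directly from the a priori estimates already assembled. The uniform upper bound in Lemma \ref{0logp} makes $\{\psi_{p,s,\delta,\epsilon}\}$ a bounded family of quasi-psh functions on the compact K\"ahler manifold $Y$, hence $L^1$-precompact. The parameter-derivative estimate \eqref{1derest} in Lemma \ref{differ} gives equicontinuity in $(s,\delta,\epsilon)$ on every compact $K \subset\subset \mathcal{R}_X \setminus G_p$, so the iterated limit $\varphi$ exists there and is independent of the order in which the parameters vanish. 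The interior $C^k$-regularity estimate then upgrades this to smooth convergence on such compacts, so $\varphi \in C^\infty(\mathcal{R}_X \setminus G_p)$.

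With the limit in hand, most properties follow directly. Property (1) is obtained by passing to the limit in \eqref{appcmp2} on $\mathcal{R}_X \setminus G_p$ via smooth convergence, together with local boundedness of $\varphi$ outside $G_p$ from Lemma \ref{0logp}. Property (2) descends from the $\epsilon$-dependent log lower bound of Lemma \ref{0logp} by fixing $\epsilon$ small and sending $s, \delta \to 0$. Property (3) follows from property (1) by applying $-\ddbar \log$ to the Monge--Amp\`ere equation, using $\chi = \ddbar \log \Omega$, which gives $Ric(\omega_{KE}) = -\omega_{KE}$ on $\mathcal{R}_X$; this identity extends as currents across $\mathcal{S}_X \setminus \textnormal{LCS}(X)$ since $\omega_{KE}$ has bounded local potentials there. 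For property (4), the pulled-back equation on $Y$ reads
\begin{equation*}
(\pi^*\chi + \ddbar \pi^*\varphi)^n = e^{\pi^*\varphi} |\sigma_E|^{2}_{h_E} |\sigma_F|^{-2}_{h_F}\, \Omega_Y,
\end{equation*}
and finite total mass on the left coupled with the non-integrability of $|\sigma_F|^{-2}_{h_F}$ along each $F_j$ (with $b_j = 1$) forces $e^{\pi^*\varphi}$ to vanish generically on such $F_j$, so $\varphi \equiv -\infty$ on $\textnormal{LCS}(X)$. Property (5) follows from the cohomological identity $\int_Y \omega_{p,s,\delta,\epsilon}^n = ((1+\delta)[\pi^*\chi] + s[\theta])^n \to [K_X]^n$ combined with smooth convergence of the densities on compacts of $\mathcal{R}_X \setminus G_p$ and the fact that $\omega_{KE}^n$ is absolutely continuous with respect to $\Omega$ where $\varphi$ is smooth, so it does not charge the hypersurface $G_p$ nor the analytic subset $\mathcal{S}_X$. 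Property (6) is built into the construction.

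The main obstacle is the behavior across $\textnormal{LCS}(X)$: the bound in Lemma \ref{0logp} degenerates there, so properties (4) and the mass-conservation portion of (5) cannot be read off from uniform local estimates alone. They require coupling the $L^1$-compactness of quasi-psh functions on $Y$ with the intrinsic non-integrability of the adapted measure $\Omega$ along the log-canonical locus, and verifying that no mass concentrates on $\pi^{-1}(\textnormal{LCS}(X))$ in the weak limit defining $\omega_{KE}^n$. A secondary technical point is that the construction uses the auxiliary divisor $G_p$; independence of $\varphi$ from this choice follows a posteriori from the uniqueness of the K\"ahler--Einstein current asserted in Theorem \ref{main1}.
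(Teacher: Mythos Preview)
Your overall architecture matches the paper's: take subsequential limits of $\psi_{p,s,\delta,\epsilon}$ using the uniform upper bound and local $C^k$ estimates, then use Lemma \ref{differ} to upgrade subsequential convergence to an honest limit (this is exactly how the paper proves (6)). Items (1)--(3) are handled in the same way in both.

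Two points where your sketch diverges from, or falls short of, the paper's argument:

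\textbf{Item (4).} Your heuristic ``finite total mass plus non-integrability of $|\sigma_F|^{-2}$ forces $e^{\pi^*\varphi}$ to vanish generically on $F_j$'' is correct in spirit but hides the only non-trivial step. This implication fails for arbitrary measurable $\varphi$; it requires plurisubharmonicity. The paper isolates this as a separate statement due to Berndtsson (Lemma 2.7 in \cite{B}): if $\phi$ is psh on a ball and $\int |z_1|^{-2}e^{\phi}\,dV<\infty$, then $\phi\to-\infty$ near $\{z_1=0\}$. Once this is in hand, the paper still needs a connectedness argument to propagate $\varphi=-\infty$ from the discrepancy $-1$ divisors to the full fibre over $\textnormal{LCS}(X)$, using that $\varphi$ has milder-than-log singularities. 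You should name Berndtsson's lemma explicitly; without it the step is a genuine gap.

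\textbf{Item (5).} Here the paper does \emph{not} use your cohomological-mass argument. Your route gives the inequality $\int_{\mathcal{R}_X\setminus G_p}\omega_{KE}^n\le [K_X]^n$ by Fatou, but the reverse inequality is where the difficulty lies: mass of the approximants could in principle escape toward $\pi^{-1}(\textnormal{LCS}(X))$ as $s,\epsilon\to 0$, and nothing in your outline rules this out (the absolute continuity of $\omega_{KE}^n$ with respect to $\Omega$ on the smooth locus says nothing about mass of the approximants). The paper instead applies the comparison principle directly to $\omega_{KE}$ against the explicit barriers $v_{\epsilon,K}=\epsilon\log|\sigma_{G_p}|^2_{h_\Omega}+K$ (and $u_{\epsilon,K}=-\epsilon\log|\sigma_{G_p}|^2_{h_\Omega}$ for the other inequality), exploiting (2) to trap $\varphi$ between these. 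This gives both inequalities with no limiting argument at all and is the cleaner way to close (5).
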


\begin{proof}  Since for any $p\in X\setminus \textnormal{LCS}(X)$ and any sufficiently small $\delta$ and $\epsilon$, we have uniform estimates for $\psi_{p, s, \delta, \epsilon}$ away from $G_p$,  for any sequence $s_j , \delta_j, \epsilon_j \rightarrow 0$, we can assume $\psi_{p, s_j, \delta_j, \epsilon_j}$ converges, after passing to a subsequence, to some
$$\varphi \in PSH(Y, \chi) \cap C^\infty(Y\setminus G_p).$$
In particular, there exists $C>0$ and for any $\epsilon>0$, there exists $C_\epsilon>0$ such that
$$\epsilon \log |\sigma_{G_p}|^2_{h_\Omega} - C_\epsilon \leq \varphi \leq  C. $$

(1), (2) and (3) can be proved from the above conclusion by passing the estimates of $\psi_{p, s, \delta, \epsilon}$ to the limit $\varphi$. Furthermore, $\varphi$ solves equation (\ref{ocmp}) on $\mathcal{R}_X$.

(4) can be reduced to the following statement: Suppose $\phi$ is a plurisubharmonic function on the unit ball $B \subset \mathbb{C}^n$ such that $$\int_{B} |z_1|^{-2}e^\phi (\sqrt{-1})^ndz_1\wedge d\overline{z_1}\wedge... \wedge dz_n \wedge d\overline{z_n} <\infty, $$ then $\phi$ tends to $-\infty$ near $B \cap \{z_1=0\}$.  
Such a statement is proved by Berndtsson (c.f. Lemma 2.7 in \cite{B}).   $\varphi$ is locally bounded and the fibre of $\pi$ over the log terminal locus is connected. On the other hand, $\varphi$ tends to $-\infty$ near $\pi^{-1}\left(\textnormal{LCS}(X) \right)$ in $Y$. Otherwise there exists a curve $C$ in exceptional divisor and $C$ intersects at least one exceptional divisor with discrepancy $-1$, and so $\varphi$ must be constant on $C$ since it is pluriharmonic with singularities better than any log poles. This leads to contradiction and so $\varphi$ must tend to $-\infty$ near  $\pi^{-1}\left(\textnormal{LCS}(X) \right)$.  Therefore the  function $\varphi$ can uniquely descend to $X\setminus \textnormal{LCS}(X)$.

(5) follows because $\varphi$ is better than log poles along $G_p$ and thus $\ddbar \varphi$ does not charge any mass along the singularities of $X$. More precisely, we let 
$$v_{\epsilon, K} =  \epsilon \log |\sigma_{G_p}|^2_{h_\Omega} + K, ~~E_{\epsilon, K} = \{ \varphi < v_{\epsilon, K} \}. $$
Then $\chi + \ddbar v_{\epsilon, K} = (1- \epsilon) \chi \geq 0$ on $X \setminus G_p$. For any compact set $U \subset \subset X \setminus G_p$ and $\epsilon>0$, one can choose sufficiently large $K$ such that $U \subset E_{\epsilon, K}\subset \subset X \setminus G_p$. By comparison principle, 
$$\int_{\mathcal{R}_X} (\omega_{KE})^n > \int_{E_{\epsilon, K}} (\chi + \ddbar \varphi)^n \geq \int_{E_{\epsilon, K}}(\chi + \ddbar v_{\epsilon, K})^n \geq (1-\epsilon) \int_U \chi^n.$$
By letting $K\rightarrow \infty$ and then $\epsilon \rightarrow 0$, we show that 
$$\int_{\mathcal{R}_X\setminus G_p} (\omega_{KE})^n \geq [K_X]^n. $$   
Similarly, by comparing $\varphi$ to $u_{\epsilon, K} = -\epsilon \log |\sigma_{G_p}|^2_{h_{\Omega}}$, we can also show that 
$$\int_{\mathcal{R}_X\setminus G_p} (\omega_{KE})^n \leq [K_X]^n. $$

(6) can be proved as follows. Suppose $\varphi' \in PSH(Y, \chi) \cap C^\infty(Y\setminus G_p)$ is a sequential limit of another sequence $\psi_{p, s_j, \delta_j, \epsilon_j}$. Then by the estimates in Lemma \ref{differ}, on any compact set $K\subset\subset X \setminus G_p$, there exists $C>0$ such that for sufficiently large $j>0$, 
$$ \sup_K | \psi_{p, s_j, \delta_j, \epsilon_j} - \psi_{p, s_j', \delta_j', \epsilon_j'}| \leq C \left( |\delta_j - \delta_j'| + |\epsilon_j - \epsilon_j'| + |s_j - s_j'| \right). $$
This implies that
 $$\varphi|_K = \varphi'|_K $$
and so $\varphi = \varphi'$ on $Y$ after unique extensions over $G_p$  since both lie in $PSH(Y, \chi)$. The above argument implies that as $s, \delta, \epsilon \rightarrow 0$, the solution $\psi_{p, s, \delta, \epsilon}$ converges to the unique limit $\varphi$. For different $p'\in X\setminus \textnormal{LCS}(X)$, we can let $\epsilon \rightarrow 0$ for equation (\ref{appcmp2}) with $\epsilon=0$ because Lemma \ref{differ} still holds when $\epsilon=0$.

\end{proof}

The solution constructed in Lemma \ref{existsol} coincides with the K\"ahler-Einstein current in \cite{B}, however, the key difference is that we obtain local boundedness for $\varphi$ near the log terminal singular locus of $X$. We will also prove a uniqueness result, which is different from the uniqueness theorem in \cite{B} and will be crucial in Section 4. 

\begin{lemma} \label{uniq} There exists a unique solution $\varphi \in L^\infty_{loc} (X \setminus \textnormal{LCS}(X))\cap C^\infty(\mathcal{R}_{X})$ satisfying
\begin{enumerate}

\item $(\chi + \ddbar \varphi)^n = e^\varphi \Omega$ on $\mathcal{R}_X$, 

\item there exists $p\in X\setminus \textnormal{LCS}(X)$ such that for any $\epsilon>0$, there exist $C>0$ and $C_{p, \epsilon}>0$ with the following estimate
$$\epsilon \log |\sigma_{G_p}|^2_{h_\Omega} - C_{p, \epsilon}  \leq \varphi \leq C , $$
where $G_p$ is an effective divisor as defined in Lemma \ref{effdiv}.

\end{enumerate}
In particular, $\varphi \in PSH(X, \chi)$ satisfies all the conditions in Lemma \ref{existsol}.

\end{lemma}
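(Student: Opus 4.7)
The plan is to establish uniqueness by running a maximum principle argument on the difference $u = \varphi_1 - \varphi_2$ of two candidate solutions, regularized by the logarithmic barrier $\epsilon \log |\sigma_{G_p}|^2_{h_\Omega}$ built from the divisor of Lemma \ref{effdiv}. First I would fix a generic point $p \in \mathcal{R}_X$ and, by a mild strengthening of the construction in Lemma \ref{effdiv}, choose the effective $\mathbb{Q}$-divisor $G_p \sim_{\mathbb{Q}} K_X$ so that its support contains not only $\textnormal{LCS}(X)$ but also the full singular set $\mathcal{S}_X$, while still avoiding $p$. Then $X \setminus G_p \subset \mathcal{R}_X$, and the lower bound in hypothesis (2) is inherited by both $\varphi_1, \varphi_2$ with this enlarged $G_p$, since $\varphi_i \in L^\infty_{loc}(X \setminus \textnormal{LCS}(X))$ absorbs any extra log pole contribution supported away from $\textnormal{LCS}(X)$. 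Set $u = \varphi_1 - \varphi_2$, which is smooth on $X \setminus G_p$.

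For each small $\epsilon > 0$, I would introduce the test function $H_\epsilon = u + \epsilon \log |\sigma_{G_p}|^2_{h_\Omega}$. Applying the upper bound $\varphi_1 \leq C$ from hypothesis (2) together with the lower bound on $\varphi_2$ at some parameter $\epsilon' \in (0, \epsilon)$, one finds that $H_\epsilon \leq C + C_{p, \epsilon'} + (\epsilon - \epsilon') \log |\sigma_{G_p}|^2_{h_\Omega}$, which tends to $-\infty$ near $G_p$ because $\epsilon - \epsilon' > 0$. Hence $H_\epsilon$ attains its supremum at an interior smooth point $x_\epsilon \in X \setminus G_p$. At $x_\epsilon$, the maximum principle together with the Poincar\'e-Lelong identity $\ddbar \log |\sigma_{G_p}|^2_{h_\Omega} = -\chi$ off $G_p$ yields $\chi + \ddbar \varphi_1 \leq (1+\epsilon) \chi + \ddbar \varphi_2$ as positive Hermitian $(1,1)$-forms. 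Taking the $n$-th wedge power and invoking the Monge-Amp\`ere equation for each $\varphi_i$, one obtains $e^{\varphi_1(x_\epsilon)} \Omega \leq \left((1+\epsilon)\chi + \ddbar \varphi_2\right)^n(x_\epsilon) \leq (1 + C\epsilon)\, e^{\varphi_2(x_\epsilon)} \Omega$, hence $u(x_\epsilon) \leq C\epsilon$. For any fixed $y \in X \setminus G_p$, the inequality $u(y) + \epsilon \log |\sigma_{G_p}(y)|^2_{h_\Omega} \leq H_\epsilon(x_\epsilon) \leq C\epsilon + \epsilon \log |\sigma_{G_p}(x_\epsilon)|^2_{h_\Omega}$ together with $|\sigma_{G_p}|^2_{h_\Omega}$ being bounded above forces $u(y) \leq 0$ upon letting $\epsilon \to 0$. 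Exchanging the roles of $\varphi_1$ and $\varphi_2$ yields $u \equiv 0$ on $X \setminus G_p$, and continuity on $\mathcal{R}_X$ extends this equality to all of $X \setminus \textnormal{LCS}(X)$.

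The delicate step is the middle inequality $\left((1+\epsilon)\chi + \ddbar \varphi_2\right)^n \leq (1 + C\epsilon)(\chi + \ddbar \varphi_2)^n$ at $x_\epsilon$, which after expansion requires a bound on $\textnormal{tr}_{\omega_{\varphi_2}} \chi(x_\epsilon)$ that remains controlled even if $x_\epsilon \to G_p$ as $\epsilon \to 0$. I would establish this by proving an analogue of Lemma \ref{2ndem} directly for the limiting solution $\varphi_2$ in place of $\psi_{p, s, \delta, \epsilon}$, giving $\omega_{\varphi_2} \leq C(|\sigma_D|^2 |\sigma_{G_p}|^2)^{-2\lambda} \theta$ for a suitable auxiliary exceptional divisor $D$, and then balancing the exponents so that the effective lower bound on $H_\epsilon(x_\epsilon)$ coming from evaluation at a fixed comparison point $p' \in \mathcal{R}_X \setminus G_p$ forces $|\sigma_{G_p}(x_\epsilon)|^2_{h_\Omega}$ to stay bounded below by a fixed power of $\epsilon$, absorbing the resulting correction. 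A cleaner alternative that bypasses this subtlety is to compare each $\varphi_i$ directly with the approximants $\psi_{p, s, \delta, \epsilon}$ from Lemma \ref{existsol} by the same maximum principle argument, and then invoke the Lipschitz stability in Lemma \ref{differ}: since the limit of $\psi_{p, s, \delta, \epsilon}$ as $s, \delta, \epsilon \to 0$ is unique, any solution satisfying the hypotheses must coincide with that common limit.
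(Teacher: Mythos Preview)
Your primary approach---direct comparison of two solutions via $H_\epsilon = \varphi_1 - \varphi_2 + \epsilon \log|\sigma_{G_p}|^2_{h_\Omega}$---has a real gap at exactly the point you flag as delicate. Controlling $\bigl((1+\epsilon)\chi + \ddbar\varphi_2\bigr)^n / (\chi+\ddbar\varphi_2)^n$ at $x_\epsilon$ requires a \emph{lower} bound on $\omega_{\varphi_2}$ relative to $\chi$, not the upper bound that Lemma~\ref{2ndem} produces; and the Schwarz-type lower bound one can prove (as in Lemma~\ref{sch1}) degenerates like a power of $|\sigma_{G_p}|^2$, so $\textnormal{tr}_{\omega_{\varphi_2}}\chi(x_\epsilon)$ may blow up faster than $\epsilon^{-1}$ if $x_\epsilon$ drifts toward $G_p$. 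Your proposed fix---arguing that $|\sigma_{G_p}(x_\epsilon)|^2$ is bounded below by a power of $\epsilon$ via $H_\epsilon(x_\epsilon) \geq H_\epsilon(p')$---does not close, because the upper bound you have on $u(x_\epsilon)$ itself involves a negative log term of the same strength. There is no evident way to balance these exponents without an extra degree of freedom.

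Your alternative (b) is essentially what the paper does, and this is where the missing idea lives: rather than compare two limiting solutions directly, the paper compares the candidate $\varphi'$ to the approximant $\psi_{p,s,-\delta,\epsilon}$, which solves the equation with reference form $(1-\delta)\chi$ instead of $\chi$. The quantity $\phi = \psi_{p,s,-\delta,\epsilon} - \varphi' + \delta^2\log|\sigma_{G_{p,p'}}|^2_{h_\Omega} + \delta^3\log|\sigma_D|^2_{h_D}$ then satisfies an equation in which the log barriers are absorbed by the spare $\delta\chi$ on the left, so the maximum principle applies cleanly with no need to control $\textnormal{tr}_{\omega}\chi$ at the maximum point. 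Taking $s \ll \delta$, $\epsilon \ll \delta^2$ and letting all parameters tend to zero yields $\varphi \leq \varphi'$; the reverse inequality uses $\psi_{p,s,+\delta,\epsilon}$. The invocation of Lemma~\ref{differ} is not actually needed here---that lemma was used earlier to show the approximants have a unique limit, but the comparison of an arbitrary $\varphi'$ to that limit is a separate maximum-principle step. Note also that the two solutions may come with different base points $p, p'$; the paper handles this by working with the averaged divisor $G_{p,p'} = \tfrac{1}{2}(G_p + G_{p'})$.
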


\begin{proof}
We first prove the uniqueness.  
Let $\varphi$ be the K\"ahler-Einstein potential constructed in Lemma \ref{existsol} as the limit of $\psi_{p, s, \delta, \epsilon}$ ($s , \delta$, $\epsilon\rightarrow0$) for a given $p\in X\setminus \textnormal{LCS}(X)$.
Suppose there exists another $\varphi' $ satisfying the conditions in the lemma and for any $\epsilon>0$, there exist $C_1>0$ and $C_2= C_2(\epsilon)>0$ such that  
$$\epsilon \log |G_{p'}|^2_{h_\Omega} - C_2 \leq \varphi' \leq C_1,$$
for some $p'\in X\setminus \textnormal{LCS}(X)$.
We would like to show that $\varphi' =  \varphi$.

  We let $$G_{p, p'} = \frac{1}{2}{ G_p + G_{p'} }, $$
Then obviously there exist $C_3>0$ and $C_4=C_4(\epsilon)>0$ such that
$$\epsilon \log |G_{p, p'}|^2_{h_\Omega} - C_4 \leq \varphi \leq C_3, ~\epsilon \log |G_{p, p'}|^2_{h_\Omega} - C_4 \leq \varphi' \leq C_3. $$

We consider the quantity 
$$\phi = \psi_{p, s, -\delta, \epsilon} -  \varphi' + \delta^2 \log |G_{p, p'}|^2_{h_\Omega}+\delta^3\log |\sigma_D|^2_{h_D},$$
where $\sigma_D$ and $h_D$ are defined in (\ref{tsuji}) and (\ref{tsuji1}). 
Then $\phi$ satisfies the following equation on the log resolution $Y$, 
$$\frac{(\chi + \ddbar \varphi'  + s\theta - \delta(1- \delta )\chi + \delta^3 Ric(h_D)+   \ddbar \phi  )^n }{ (\chi + \ddbar \varphi')^n} = e^{\phi}   \frac{ |G_{p'}|^{2\epsilon}_{h_\Omega} + s}{ |G_{p, p'}|_{h_\Omega}^{2\delta^2}  |\sigma_D|^{2\delta^3}_{h_D}   }.$$
We pick $s<<\delta<<1$,  $ \epsilon << \delta^2$ and apply the maximum principle to $\phi$. There exists $C>0$ such that for all $s<< \delta<<1$, $\epsilon << \delta^2$, 
$$\sup_X \phi \leq C. $$
Let $s, \delta, \epsilon \rightarrow 0$. We have
$$\varphi \leq \varphi'.$$
Similarly, we can prove $\varphi \geq \varphi'$ by applying the maximum principle to $$\phi' = \psi_{p', s, \delta, \epsilon} - \varphi' - \delta \log |\sigma_{G_p}|^2_{h_\Omega}. $$
Therefore $\varphi = \varphi'$ on $X \setminus G_{p, p'}$ and $\varphi'$ extends uniquely in $PSH(X, \chi)$ since $\varphi \in PSH(X, \chi)$.

For the existence part, we only need to verify that $\varphi\in L^\infty_{loc}(X \setminus \textnormal{LCS}(X))$. By the uniqueness result we proved above, $\psi_{p, s, \delta, \epsilon}$ converges to the same $\varphi$ and such $\varphi$ satisfies the estimate (2) in Lemma \ref{existsol} for all $p\in X\circ$. Then (2) in the lemma is proved.

\end{proof}

Lemma \ref{uniq} completes the proof of Theorem \ref{main1}. We also remark that the uniqueness result of Lemma \ref{uniq} does not require $\varphi \in PSH(X, \chi)$.  We can also generalize Lemma \ref{uniq} by adding a weight on the righthand side equation.

\begin{lemma} \label{Finfty} For any open set $U \subset\subset X\setminus \textnormal{LCS}(X) $ and fixed $\alpha>1$, $K_1, K_2>0$,  if   $f$ is a real valued function  on $X$ satisfying 
$$||e^f||_{L^\infty(X\setminus U)} \leq  K_1 , ~~ \int_U e^{\alpha f} \Omega \leq K_2, $$
then there exists a unique solution $\varphi$ solving 
$$(\chi+ \ddbar \varphi)^n = e^{\varphi + f} \Omega$$
satisfying  
\begin{enumerate}

\item $\left|\left|\varphi \right|\right|_{L^\infty(U)}  \leq C_1 = C_1(X, U, \Omega, \alpha, K_1, K_2).$

\medskip

\item For any $p\in X\setminus \textnormal{LCS}(X)$ and $\epsilon>0$, there exist $C_2= C_2(X, U, \Omega, \alpha, K_1, K_2)>0$ and $C_3 = C_3 (X, U, \Omega, \alpha, K_1, K_2, \epsilon, p)>0$ such that 
$$- C_3 + \epsilon \log |G_p |^2_{h_\Omega} \leq \varphi \leq C_2. $$

\end{enumerate}
In particular, if the pullback of $f$ on the log resolution $Y$ of $X$ is smooth,  $\varphi \in C^\infty(\mathcal{R}_X)$. 
\end{lemma}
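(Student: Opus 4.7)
The plan is to mirror the existence and uniqueness arguments of Lemmas \ref{existsol} and \ref{uniq}, adapted to the weighted right-hand side $e^{\varphi+f}\Omega$. If $f$ is merely measurable, I would first approximate it by a sequence of smooth functions $f_k$ on $Y$ satisfying $\|e^{f_k}\|_{L^\infty(X\setminus U)}\le 2K_1$ and $\int_U e^{\alpha f_k}\Omega\le 2K_2$ uniformly (for instance by mollifying a truncation). Then I would consider the family of approximating complex Monge--Amp\`ere equations on the log resolution $Y$,
\begin{equation*}
\bigl((1+\delta)\chi+s\theta+\ddbar\psi\bigr)^n=\frac{e^{\psi+f_k}(|\sigma_{G_p}|^{2\epsilon}_{h_\Omega}+s)(|\sigma_E|^2_{h_E}+s)}{|\sigma_F|^2_{h_F}+s}\,\Omega_Y,
\end{equation*}
for $s,\epsilon\in(0,1)$, $|\delta|<\delta_0$, which admit unique smooth solutions $\psi_k=\psi_{k,p,s,\delta,\epsilon}$ by \cite{A,Y1}.

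The key observation is that the right-hand side is uniformly bounded in $L^{1+\eta}(Y,\Omega_Y)$ for some $\eta>0$ depending only on $\alpha$: on $\pi^{-1}(U)$, the factor $(|\sigma_F|^2+s)^{-1}$ is bounded because $U$ is away from $\textnormal{LCS}(X)$, and $e^{f_k}\in L^\alpha$; on $Y\setminus\pi^{-1}(U)$, $e^{f_k}\le 2K_1$, so the remaining factors lie in $L^{1+\eta_0}$ uniformly by the argument already used in the proof of Lemma \ref{0logp}. Applying the Kolodziej--EGZ--Zhang $L^\infty$ estimate \cite{Kol1,EGZ,Z} then yields a uniform bound $\|\psi_k\|_{L^\infty(Y)}\le C(K_1,K_2,\alpha)$. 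Next, the barrier/maximum principle argument of Lemma \ref{0logp}, comparing $\psi_{k,p,s,\delta,\epsilon}$ with $\psi_{k,p,s,-\delta_0,\epsilon_0}+\epsilon_0\log|\sigma_{G_p}|^2_{h_\Omega}$, carries through unchanged because $e^{f_k}$ appears on both sides of the resulting ratio equation and cancels; this produces the lower bound $\epsilon\log|\sigma_{G_p}|^2_{h_\Omega}-C_\epsilon$. To extract conclusion (1), I would cover $\overline U$ by finitely many open sets on each of which some $|\sigma_{G_{p_i}}|$ is bounded below, obtaining a uniform $L^\infty(U)$ bound after passing to the limit $s,\delta,\epsilon\to 0$ and $k\to\infty$.

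For uniqueness, I would apply the maximum principle to the quantity $\phi=\psi_{p,s,-\delta,\epsilon}-\varphi'+\delta^2\log|\sigma_{G_{p,p'}}|^2_{h_\Omega}+\delta^3\log|\sigma_D|^2_{h_D}$ exactly as in the proof of Lemma \ref{uniq}; the weight $e^f$ appears on both sides of the ratio equation and cancels. Finally, if $\pi^{\ast}f$ is smooth on $Y$, the second-order estimate of Lemma \ref{2ndem} carries over with only harmless additional contributions from $\Delta_\theta f$-type terms, and the standard linear elliptic bootstrap then gives $\varphi\in C^\infty(\mathcal{R}_X)$. The main obstacle will be checking that all constants depend only on $K_1,K_2,\alpha$ and the fixed background data $(X,U,\Omega)$ rather than on $f$ itself; this requires careful tracking of the splitting of the right-hand side into $\pi^{-1}(U)$ and its complement, and ensuring that the Kolodziej--EGZ--Zhang input remains uniform under the smoothing $f_k\to f$.
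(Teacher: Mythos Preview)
Your proposal is correct and follows essentially the same approach as the paper, which does not give a detailed proof but simply remarks that the argument is ``almost identical to the proof of Lemma~\ref{existsol} and Lemma~\ref{uniq}'' after lifting to the log resolution $Y$ and approximating $f$ by smooth functions with the same bounds. Your outline carries this out explicitly, with the correct splitting of the $L^{1+\eta}$ estimate over $\pi^{-1}(U)$ and its complement, and the observation that $e^{f_k}$ cancels in the barrier comparisons; one small clarification is that the Ko{\l}odziej--EGZ--Zhang bound applies uniformly only at the fixed reference level $\epsilon_0$ (as in Lemma~\ref{0logp}), with the log-pole lower bound for general small $\epsilon$ then following from the maximum principle comparison you describe.
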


The proof of Lemma \ref{Finfty} is almost identical to the proof of Lemma \ref{existsol} and Lemma \ref{uniq}. It suffices to lift the equation onto the nonsingular model $Y$ of $X$ and approximate $f$ by smooth functions with the same bounds for $f$.  Lemma \ref{Finfty} is a key ingredient to derive distance estimates in Section 6.


\section{Analytic estimates for stable families of smooth canonical models }

In this section, we will derive uniform estimates for K\"ahler-Einstein metrics in  a stable degeneration of smooth canonical models. 

We will use the notations in Section 2. Let $\pi: \mathcal{X} \rightarrow B$ be a stable degeneration of smooth canonical models  over a disc $B\subset\mathbb{C}$. Suppose $\mathcal{X}_t = \pi^{-1}(t)$ is a smooth canonical model of  complex dimension $n$ for $t\in B^*$ and the central fibre $\mathcal{X}_0$ is a semi-log canonical model, where $t$ is the Euclidean holomorphic coordinate on $B$.  After embedding $\mathcal{X}$ into $\mathbb{CP}^{N}$ by pluricanonical sections $\eta_0, ..., \eta_{N} $ in $|m K_{\mathcal{X}/B}|$ for some sufficiently large $ m \in \mathbb{Z}^+$, we let $\chi\in -c_1(K_{\mathcal{X}/B})$ be a smooth K\"ahler form on $\mathcal{X}$ induced from the projecting embedding, i.e., 
$$\chi =\frac{1}{m}  \ddbar \log \left( \sum_{j=0}^{N} |\eta_j|^2 \right).$$ 
Then $\chi_t = \chi|_{\mathcal{X}_t} \in -c_1(\mathcal{X}_t)$ is a smooth K\"ahler form on $\mathcal{X}_t$  for $t\in B^*$. $\chi_0= \chi|_{\mathcal{X}_0}$ is a K\"ahler current on $\mathcal{X}_0$ with bounded local potential and it is smooth on $\mathcal{R}_{\mathcal{X}_0}$, the nonsingular part of $\mathcal{X}_0$. We can define a real valued $(n,n)$-form measure $\Omega$ on $\mathcal{X}$ by
$$ \Omega = \left( \sum_{j=0}^N \eta_j \wedge \overline \eta_j \right)^{1/m} . $$

The restriction $\Omega_t = \Omega|_{\mathcal{X}_t}$ on each general fibre is a smooth non-degenerate volume form on $\mathcal{X}_t$ for all $t\in B^*$. $\Omega_0= \Omega| _{\mathcal{X}_0}$ is an adapted volume measure on $\mathcal{X}_0$. In particular, 
$$\ddbar\log \Omega_t = \chi_t. $$
 We now consider the following family of complex Monge-Amp\`ere equations on $\mathcal{X}_t$ for each $t\in B^*$
 \begin{equation} \label{famke}
 (\chi_t + \ddbar\varphi_t)^n = e^{\varphi_t} \Omega_t. 
 \end{equation}
Equation (\ref{famke}) admits a unique smooth solution $\varphi_t$ for all $t \in B^*$ and $\omega_t = \chi_t + \ddbar \varphi_t$ is the unique smooth K\"ahler-Einstein metric on $\mathcal{X}_t$. We let $g_t$ be the  K\"ahler-Einstein metric associated with $\omega_t$.
 
We will first  compare $\chi^n$ and $\Omega$ on each $\mathcal{X}_t$. 
 
\begin{lemma} \label{vol1} There exists $C>0$ such that 
$$\sup_{\mathcal{X}} \frac{ dt \wedge d\overline t \wedge \chi^n } { dt\wedge d\overline t \wedge \Omega} \leq C .$$

\end{lemma}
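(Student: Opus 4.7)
The plan is to pass to the semi-stable reduction $\pi':\mathcal{X}'\to B'$ provided by Theorem~\ref{reduction}. Since $t=(t')^d$ gives $(\Psi')^*(dt\wedge d\bar t)=d^2|t'|^{2(d-1)}\,dt'\wedge d\bar t'$, the factor $|t'|^{2(d-1)}$ appears identically in the pullback of both forms and cancels from the ratio. By the lemma immediately preceding Theorem~\ref{fillin}, $dt'\wedge d\bar t'\wedge(\Psi')^*\Omega$ is a smooth nonnegative $(n+1,n+1)$-form on the smooth total space $\mathcal{X}'$, while $(\Psi')^*\chi$ is smooth because $\chi=\tfrac{1}{m}\Phi^*\omega_{FS}$ is pulled back from $\mathbb{CP}^N$. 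So the task reduces to bounding a ratio of two smooth forms on $\mathcal{X}'$.

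Next I would choose local coordinates $(w^1,\ldots,w^n,t')$ on $\mathcal{X}'$. A discrepancy computation combining $K_{\mathcal{X}'}=(\Psi')^*K_{\mathcal{X}}+(d-1)\mathcal{X}'_0+\sum_k a_k E_k$ (with $a_k\geq 0$ by canonicity of $\mathcal{X}$) with Riemann--Hurwitz for the base change yields $(\Psi')^*K_{\mathcal{X}/B}=K_{\mathcal{X}'/B'}-\sum_k a_k E_k$. Hence in the trivialization $(dw)^m$ of $mK_{\mathcal{X}'/B'}$ one has $(\Psi')^*\eta_j=\tilde f_j\prod_k s_k^{ma_k}(dw)^m$, where $s_k=0$ defines $E_k$ and the residual $\tilde f_j$ share no common zero (since $\{\eta_j\}$ embeds $\mathcal{X}$). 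Setting $\tilde\phi:=\log\sum_j|\tilde f_j|^2$, one gets $(\Psi')^*\Omega=(\sum_j|\tilde f_j|^2)^{1/m}\prod_k|s_k|^{2a_k}(dw\wedge d\bar w)$ and $(\Psi')^*\chi=\tfrac{1}{m}\ddbar\tilde\phi$, so the ratio simplifies to
\[
\frac{n!\,\det(\tilde\phi_{i\bar j})_{1\leq i,j\leq n}}{m^n\,(\sum_j|\tilde f_j|^2)^{1/m}\prod_k|s_k|^{2a_k}}.
\]
Because the $\tilde f_j$ have no common zero, $\sum_j|\tilde f_j|^2$ is uniformly bounded below on compact subsets, and the problem collapses to the pointwise estimate $\det(\tilde\phi_{i\bar j})=O(\prod_k|s_k|^{2a_k})$ along each $E_k$.

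The main obstacle is this degeneracy estimate. Geometrically, $\Psi'$ contracts $E_k$ onto a subvariety of codimension $a_k+1$ in $\mathcal{X}$, so the differential of $\Phi\circ\Psi'=[\tilde f_0:\cdots:\tilde f_N]$ has an $a_k$-dimensional kernel in the fiber directions along $E_k$. To quantify this I would Taylor-expand transverse to $E_k$, modelled on the blow-up of a smooth centre of codimension $c=a_k+1$: the $a_k\times a_k$ block of $(\tilde\phi_{i\bar j})$ along contracted directions is of order $O(|s_k|^2)$, the mixed entries of order $O(|s_k|)$, and cofactor expansion yields $\det(\tilde\phi_{i\bar j})=O(|s_k|^{2a_k})$ as desired. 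A normal crossing argument covers intersections of several $E_k$, and compactness of $\mathcal{X}'$ over any relatively compact piece of $B'$ delivers the claimed global bound.
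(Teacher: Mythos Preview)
Your route through the semi-stable reduction is much more elaborate than the paper's and contains a real gap. The paper never leaves $\mathcal{X}$: it embeds a neighbourhood $U\subset\mathcal{X}$ of any point into $\mathbb{C}^N$ via the pluricanonical embedding, so that $\chi$ extends to an ambient K\"ahler metric equivalent to the Euclidean one. Then $\chi^{n+1}$ is bounded by a constant times the sum of all coordinate $(n{+}1,n{+}1)$-forms, and restricting each of those to $\mathcal{X}$ gives, by the very definition of an adapted measure, a smooth multiple of $\sqrt{-1}\,dt\wedge d\bar t\wedge\Omega$. Hence $\chi^{n+1}\leq C\,\sqrt{-1}\,dt\wedge d\bar t\wedge\Omega$, and since $\sqrt{-1}\,dt\wedge d\bar t\leq C'\chi$ (again because $\chi$ extends to the ambient space and $t$ is holomorphic), the lemma follows from $dt\wedge d\bar t\wedge\chi^n\leq C'\chi^{n+1}$. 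No resolution, no discrepancy bookkeeping.

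The gap in your argument is the assertion that $\Psi'$ contracts $E_k$ onto a subvariety of codimension $a_k+1$, equivalently that the fibre-direction differential of $\Phi\circ\Psi'$ has an $a_k$-dimensional kernel along $E_k$. The identity ``discrepancy $=$ codimension $-1$'' is valid for a single smooth blow-up of a \emph{smooth} variety along a smooth centre, but here $\mathcal{X}$ has canonical singularities and the identity fails. For the blow-up of a three-fold ordinary double point one finds $a_k=1$ while the image is a point of codimension $3$; for a surface $A_1$ singularity one finds $a_k=0$ with image a point of codimension $2$. Thus your Taylor-expansion model, calibrated on smooth blow-ups, does not produce the claimed vanishing order. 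There is also a secondary issue: the coordinates $(w^1,\ldots,w^n,t')$ exist only at a generic point of a single component of $\mathcal{X}'_0$; at normal-crossing points $t'$ is a monomial in several local coordinates, so the intersection case needs more than the sentence you give it. The target inequality $\det(\tilde\phi_{i\bar j})=O\bigl(\prod_k|s_k|^{2a_k}\bigr)$ is in fact true---it is exactly what one gets by pulling the paper's one-line estimate back to $\mathcal{X}'$---but your direct justification for it does not stand.
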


\begin{proof} We use a  trick similarly in \cite{EGZ}. By the choice of $\{ \eta_j \}_{j=0}^{N}$, $$\sqrt{-1}dt\wedge d\overline t \wedge \Omega$$ is an adapted volume measure on $\mathcal{X}$.  For any point $p \in \mathcal{X}$, we can embed an open neighborhood $U$ of $p$ in $\mathcal{X}$  into $\mathbb{C}^N$ by $i: U \rightarrow \mathbb{C}^N$ (for example, we can  localize the embedding by $[\eta_0, ..., \eta_{N}])$. Then $\chi|_U $ extends to a smooth K\"ahler metric on $\mathbb{C}^N$ and is quasi-equivalent to the Euclidean metric $\hat\omega = \sqrt{-1} \sum_{i=1}^N dz_i \wedge d\overline z_i$. Hence there exists $C_1>0$ such that near $i(p)$
$$ (C_1 )^{-1} \chi^{n+1} \leq  \sum_{ 1\leq i_1 < i_2 < ...< i_{n+1} \leq N } (\sqrt{-1})^{n+1}\prod_{k=1}^{n+1} dz_{i_k}  \wedge d \overline z_{i_k} \leq C_1 \chi^{n+1}.$$
Since $\sqrt{-1} dt\wedge d\overline t \wedge \Omega$ is an adapted volume measure on $\mathcal{X}$, for any $1\leq i_1 \leq i_2\leq ... \leq i_{n+1}\leq N$, there exist smooth nonnegative functions $f_{i_1, ..., i_{n+1}}$ in $U$  such that
$$ i^* \left(  (\sqrt{-1})^{n+1}\prod_{k=1}^{n+1} dz_{i_k} \wedge d \overline{z}_{i_k} \right)  =  \sqrt{-1} f_{i_1, ..., i_{n+1}} dt\wedge d \overline t\wedge \Omega .$$
%
%
This implies that there exists $C_2>0$ such that 
$$ \chi^{n+1} \leq C_2 ~ \sqrt{-1}dt\wedge d\overline t \wedge \Omega. $$
The lemma is proved since $\sqrt{-1} dt\wedge d\overline t$ is bounded above by a multiple of $\chi$.

\end{proof}

We immediately can achieve the following uniform upper bound for the potential $\varphi_t$. 

\begin{corollary} \label{5upb}There exists $C>0$ such that for all $t\in B^*$, 
$$\sup_{\mathcal{X}_t} \varphi_t \leq C. $$
\end{corollary}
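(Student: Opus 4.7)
The plan is a direct application of the maximum principle to $\varphi_t$ on the compact K\"ahler manifold $\mathcal{X}_t$, using Lemma \ref{vol1} to control $\chi_t^n/\Omega_t$ uniformly in $t$.

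First, for each $t\in B^*$, since $\mathcal{X}_t$ is a smooth compact manifold and $\varphi_t\in C^\infty(\mathcal{X}_t)$, let $p_t\in \mathcal{X}_t$ be a point where $\varphi_t$ attains its maximum. At $p_t$ one has $\ddbar\varphi_t\leq 0$ as a $(1,1)$-form, so
\begin{equation*}
(\chi_t+\ddbar\varphi_t)^n\big|_{p_t}\leq \chi_t^n\big|_{p_t}.
\end{equation*}
Combining this with the Monge-Amp\`ere equation (\ref{famke}) gives
\begin{equation*}
e^{\varphi_t(p_t)}\,\Omega_t\big|_{p_t} \leq \chi_t^n\big|_{p_t}.
\end{equation*}

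Next I would translate the estimate of Lemma \ref{vol1} into a fiberwise bound. The inequality $dt\wedge d\overline t\wedge \chi^n \leq C\, dt\wedge d\overline t\wedge \Omega$ on the total space $\mathcal{X}$ restricts to each smooth fiber $\mathcal{X}_t$ to give $\chi_t^n\leq C\,\Omega_t$ as positive $(n,n)$-forms, because $dt\wedge d\overline t$ is a nonzero transverse factor on $\mathcal{R}_\mathcal{X}\cap \mathcal{X}_t$ independent of the fiber geometry, and both $\chi_t^n/\Omega_t$ and the ambient quotient $\chi^n/\Omega$ agree on the smooth part of $\mathcal{X}_t$. (On the singular locus of $\mathcal{X}$ we do not need the pointwise comparison, since the fibers $\mathcal{X}_t$ are smooth for $t\in B^*$ and miss the singular set of $\mathcal{X}$, which is supported in the central fiber by flatness together with the fact that a stable degeneration has smooth general fiber and $\mathcal{X}$ has only canonical singularities concentrated over $0$.)

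Combining the two displays, $e^{\varphi_t(p_t)}\leq C$, hence $\sup_{\mathcal{X}_t}\varphi_t = \varphi_t(p_t)\leq \log C$, which is the claimed bound with constant independent of $t\in B^*$. The only subtle point is justifying that the uniform constant from Lemma \ref{vol1} transfers to each fiber; once that is done, the rest is a textbook maximum-principle argument and there is no serious obstacle. The lower bound of $\varphi_t$, which is not claimed in this corollary and in fact generally fails near the collapsing complete ends, is where the real work will occur later in the paper.
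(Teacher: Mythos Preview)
Your proof is correct and follows essentially the same route as the paper: apply the maximum principle to equation (\ref{famke}) at the maximum of $\varphi_t$ on the compact fiber, then invoke Lemma \ref{vol1} to bound $\chi_t^n/\Omega_t$ uniformly in $t$. The paper writes this as the single chain $\sup_{\mathcal{X}_t}\varphi_t \leq \sup_{\mathcal{X}_t}\log(\chi_t^n/\Omega_t) = \sup_{\mathcal{X}_t}\log\bigl((dt\wedge d\bar t\wedge\chi^n)/(dt\wedge d\bar t\wedge\Omega)\bigr) \leq \sup_{\mathcal{X}}\log\bigl((dt\wedge d\bar t\wedge\chi^n)/(dt\wedge d\bar t\wedge\Omega)\bigr)$, which is exactly your argument in compressed form; your added remark that the smooth fibers $\mathcal{X}_t$ for $t\in B^*$ avoid the singular locus of $\mathcal{X}$ is a reasonable clarification and not a departure.
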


\begin{proof} We apply the maximum principle to equation (\ref{famke}) on $\mathcal{X}_t$ at the maximal point of $\varphi_t$ and obtain
$$ \sup_{\mathcal{X}_t} \varphi_t \leq \sup_{\mathcal{X}_t} \log\left(  \frac{\chi_t^n }{\Omega_t} \right) = \sup_{\mathcal{X}_t} \log \left( \frac{\sqrt{-1} dt\wedge d\overline t \wedge \chi_t^n }{ \sqrt{-1} dt\wedge d\overline t \wedge \Omega_t} \right)\leq \sup_{\mathcal{X}} \log \left( \frac{ \sqrt{-1} dt\wedge d\overline t \wedge \chi^n }{\sqrt{-1} dt\wedge d\overline t \wedge \Omega} \right) . $$
The corollary easily follows from Lemma \ref{vol1}.

\end{proof}

The main goal of this section is to achieve a local $L^\infty$-estimate for $\varphi_t$. We will apply the semi-stable reduction for $\pi: \mathcal{X} \rightarrow B$ with following diagram 
$$
\begin{diagram}
\node{\mathcal{X}'} \arrow{se,l}{ \pi' }  \arrow{e,t}{\Psi}   \node{\mathcal{X}\times_B B'}  \arrow{e,t}{f'} \arrow{s}  \node{\mathcal{X} } \arrow{s,r}{\pi} \\
\node{}      \node{B'} \arrow{e,t}{f}  \node{B}
\end{diagram}
$$
such that  $\mathcal{X}'$ is smooth  and $\mathcal{X}'_0= (\pi')^{-1}(0)$ is a reduced divisor of smooth components with  normal crossings.  Let $t'$ be the holomorphic coordinate on $B'$ such that $t = (t')^d$.

The central fibre $\mathcal{X}'_0 = \widetilde{\mathcal{X}_0} \cup \mathcal{E} $, where  $\widetilde{\mathcal{ X}_0}$ is the strict transform of $\mathcal{X}_0$ and $\mathcal{E}$ is the exceptional divisor.  


For simplicity, we will use  $\chi$ for  $(\Psi')^* \chi$ on $\mathcal{X}'$, where 
$$\Psi' = f' \circ \Psi.$$ By Kodaira's lemma, there exist an effective $\mathbb{Q}$-Cartier divisor $\mathcal{D}$ whose support coincides with the support of $\mathcal{E}$ and a smooth hermitian metric $h_{\mathcal{D}}$ equipped on the line bundle associated to $\mathcal{D}$ such that $  \chi_\epsilon = \chi - \epsilon Ric(h_{\mathcal{D}})$ is a K\"ahler form on $\mathcal{X}'$ for all sufficiently small $\epsilon >0$. Let $\sigma_{\mathcal{D}}$ be the defining section of $\mathcal{D}$.

By discussion in Section 2, we have
$$K_{\mathcal{X}'} = (\Psi')^*K_{\mathcal{X}} + (d-1)\widetilde{\mathcal{X}_0} + \sum_{i=1}^K c_i D_i, $$
with $c_i \geq 0$, where $D_i$ are components of the exceptional divisor $\mathcal{D}$. Also 
Let $\widetilde{\mathcal{X}_0} = \cup_{\alpha=1}^{\mathcal{A}} \widetilde{X_\alpha}$, where $\widetilde{X_\alpha}$ is the irreducible component of $\mathcal{X}'_0$. 

We pick arbitrary $\alpha$ and let 
$$\widetilde{X} = \widetilde{X}_\alpha, ~~ \widetilde{\mathcal{X}_0} = \widetilde{X} \cup \widetilde{X}' $$ and work on $\widetilde{X}$ instead of $\widetilde{\mathcal{X}_0}$ unless $\widetilde{\mathcal{X}_0}$ is already irreducible.  In particular, there exists a component $X$ in $\mathcal{X}_0$ such that 
$$X = \Phi' (\widetilde{X}). $$
We can write
$$\widetilde{\mathcal{X}'_0} = \widetilde X  \cup \widetilde{X}'  \cup_{i=1}^I E_i \cup_{j=1}^J  F_j$$
from the bundle formula 
$$K_{\widetilde{X}} = (\Psi')^* K_X + \sum a_i E_i|_{\widetilde{X}} - \sum_j b_j F_j|_{\widetilde{X}}, $$
%
where $a_i \geq 0$, $0< b_j \leq 1$, $E_i$ and $F_j$ are smooth irreducible components in $(\mathcal{D}\cup \widetilde{X}')$. 
Let $\sigma_{E_i}$, $\sigma_{F_j}$ and $\sigma_{\widetilde{X}'}$ be the defining sections for $E_i$, $F_j$ and $\widetilde{X}'$ respectively and we equip the corresponding line bundles with smooth hermitian metrics $h_{E_i}$, $h_{F_j}$ and $h_{\widetilde{X}'}$.


We will now estimate the lower bound of $\varphi_t$. Equation (\ref{famke}) can be lifted to $\mathcal{X}'_{t'} = (\Psi')^{-1}(t')$ for $t' \in (B')^*$,  given by 
\begin{equation}
(\chi_{t'} + \ddbar \varphi_{t'})^n = e^{\varphi_{t'}} \Omega_{t'}, 
\end{equation}
where $\chi_{t'}$, $\Omega_{t'}$ and $\varphi_{t'}$ are pullback of $\chi_t$, $\Omega_t$ and $\varphi_t$ by $\Psi'$.  

\begin{lemma} \label{54} For any $\epsilon>0$, there exists $C_\epsilon>0$ such that for all $t' \in (B')^*$, we have 
$$ \varphi_{t'} \geq \epsilon \log \left( \prod_{i=1, j =1}^{I , J}   |\sigma_{ E_i} |_{h_{E_i}}^2  |\sigma_{F_j}|^2_{h_{F_j}}  |\sigma_{\widetilde{X}'}|^2_{h_{\widetilde{X}'}} \right)- C_\epsilon$$
on $ \mathcal{X}_t$.

\end{lemma}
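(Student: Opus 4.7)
The plan is to adapt the barrier-function argument of Lemma \ref{0logp} to the family setting on the semistable reduction $\mathcal{X}'$, exploiting the uniform $L^\infty$-estimates of \cite{Kol1, EGZ, Z}. Pulling the Monge-Amp\`ere equation (\ref{famke}) back via $\Psi'$ and invoking the discrepancy formulas for the base change and the log resolution, one sees that $(\Psi')^*\Omega$ differs from a fixed smooth positive volume form $\widetilde\Omega$ on $\mathcal{X}'$ by a bounded smooth factor times
\begin{equation*}
W = \prod_i |\sigma_{E_i}|^{2a_i}_{h_{E_i}} \prod_j |\sigma_{F_j}|^{-2b_j}_{h_{F_j}} |\sigma_{\widetilde{X}'}|^{2(d-1)}_{h_{\widetilde{X}'}}.
\end{equation*}
Since $b_j<1$ while $a_i,d-1\geq 0$, the weight $W$ lies in $L^{1+\eta}(\mathcal{X}',\widetilde\Omega)$ for some $\eta>0$, with norm that restricts uniformly to fibers $\mathcal{X}'_{t'}$.

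For fixed small $\epsilon>0$ and regularization parameter $s>0$, I would next solve on $\mathcal{X}'_{t'}$ the auxiliary equation
\begin{equation*}
(\chi_\epsilon|_{\mathcal{X}'_{t'}} + s\theta|_{\mathcal{X}'_{t'}} + \ddbar \psi_{t',s,\epsilon})^n = e^{\psi_{t',s,\epsilon}}\, W_{s,\epsilon}\, \widetilde\Omega|_{\mathcal{X}'_{t'}},
\end{equation*}
where $\chi_\epsilon = \chi - \epsilon\,Ric(h_\mathcal{D})$ is K\"ahler on $\mathcal{X}'$ for a Kodaira-type hermitian metric $h_\mathcal{D}$ on the exceptional divisor $\mathcal{D}$, and $W_{s,\epsilon}$ is a smooth regularization of $W$ in which the pole orders $b_j$ and the exponent $(d-1)$ are shifted by a small multiple of $\epsilon$ and the sharp zeros/poles are replaced by $|\sigma|^2+s$. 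Existence of a smooth solution on the smooth fiber follows from Aubin--Yau, while the uniform $L^{1+\eta}$-bound on $W_{s,\epsilon}$ feeds into the Kolodziej--EGZ--Zhang estimate to yield $\|\psi_{t',s,\epsilon}\|_{L^\infty(\mathcal{X}'_{t'})}\leq C(\epsilon)$, independent of $t'$ and of $s$.

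The final step is the maximum-principle comparison. Define
\begin{equation*}
\phi = \varphi_{t'} - \psi_{t',s,\epsilon} - \epsilon \log \prod_{i,j} |\sigma_{E_i}|^2_{h_{E_i}} |\sigma_{F_j}|^2_{h_{F_j}} |\sigma_{\widetilde{X}'}|^2_{h_{\widetilde{X}'}}
\end{equation*}
on $\mathcal{X}'_{t'}$. Comparing the Monge-Amp\`ere equations for $\varphi_{t'}$ and $\psi_{t',s,\epsilon}$ at the minimum of $\phi$ and using the uniform $L^\infty$-bound for $\psi_{t',s,\epsilon}$ together with the bounded curvature contributions from the Kodaira correction, one obtains $\phi \geq -C_\epsilon$ uniformly in $s$ and $t'$. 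Sending $s\to 0$ yields the stated inequality. The principal obstacle is ensuring that the Kolodziej $L^\infty$-estimate for $\psi_{t',s,\epsilon}$ is genuinely uniform in $t'$ as $t'\to 0$; this requires a careful choice of the reference volume form $\widetilde\Omega$ and verification that the $L^{1+\eta}$-norm of $W_{s,\epsilon}$ restricts to fibers with constants insensitive to the degeneration, which in turn follows from the smoothness of $\widetilde\Omega$ across the central fiber in $\mathcal{X}'$.
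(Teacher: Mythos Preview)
Your approach has a genuine gap at exactly the point you flag as the ``principal obstacle,'' and the resolution you offer does not close it. The Ko{\l}odziej--EGZ--Zhang $L^\infty$-estimate you invoke is a statement about a \emph{fixed} compact K\"ahler manifold: the constant depends on the pluripotential (capacity) structure of $(\mathcal{X}'_{t'}, \chi_\epsilon|_{\mathcal{X}'_{t'}})$, not merely on the $L^{1+\eta}$-norm of the right-hand side. As $t'\to 0$ the fibres degenerate toward a normal-crossing variety, and nothing in the smoothness of a reference volume form $\widetilde\Omega$ on the total space controls those capacity constants. So the claim that $\|\psi_{t',s,\epsilon}\|_{L^\infty}\le C(\epsilon)$ uniformly in $t'$ is unproved. (There is also some dimensional confusion: $\Omega$ is a relative $(n,n)$-form, so comparing $(\Psi')^*\Omega$ to an $(n+1,n+1)$-form $\widetilde\Omega$ on $\mathcal{X}'$ is not meaningful, and your expression for $W$ mixes the central-fibre adjunction formula for $K_{\widetilde X}$ with a total-space statement.)

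The paper avoids this difficulty entirely and does \emph{not} use any Ko{\l}odziej-type estimate here. Instead it applies the maximum principle directly to $\varphi_{t'}$ against an explicit barrier: one perturbs $\chi_{t'}$ to a K\"ahler metric $\chi_{t',\epsilon}$ with conical singularities of small angle along the (restrictions to $\mathcal{X}'_{t'}$ of the) divisors $E_i$, $F_j$, $\widetilde{X}'$, sets $\varphi_{t',\epsilon}=\varphi_{t'}-f_\epsilon$ for the corresponding potential $f_\epsilon$, and reduces to showing
\[
\inf_{\mathcal{X}'}\ \frac{dt'\wedge d\bar t'\wedge(\chi_{t',\epsilon})^n}{e^{f_\epsilon}\,dt'\wedge d\bar t'\wedge\Omega_{t'}}\ \ge\ -C(\epsilon).
\]
This ratio is then bounded below by an explicit local computation in semistable coordinates $t'=x z_1\cdots z_m$: the conical volume of $\chi_{t',\epsilon}$ contributes a factor $|z_1\cdots z_m|^{2\epsilon_4}$, the barrier $e^{f_\epsilon}$ contributes $|z_1\cdots z_m|^{2\epsilon_2}$ in the denominator, and $dt'\wedge d\bar t'\wedge\Omega_{t'}$ is smooth by the canonical-bundle discrepancy. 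Choosing $\epsilon_4\ll\epsilon_2$ gives the bound. The whole argument is a pointwise volume comparison on the total space, so uniformity in $t'$ is automatic; no auxiliary Monge--Amp\`ere equation or capacity estimate is needed.
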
  

\begin{proof} 

 We define the following smooth closed $(1,1)$-form
\begin{eqnarray*}
  \chi_{t', \epsilon}  
&=&   \chi_{t'} - \epsilon_1 \ddbar \log Ric(h_\mathcal{D}) - \epsilon_2 \ddbar \sum_k \log \left(  |\sigma_{\widetilde{X}'}|^2_{h_{\widetilde{X}'}} \right) \\
&&+  \epsilon_3    \ddbar \left(  \sum_{i=1}^I |\sigma_{E_i}|^{2\epsilon_4}_{h_{E_i}} + \sum_{j=1}^J |\sigma_{F_j}|^{2\epsilon_4}_{h_{F_j}} + |\sigma_{\widetilde{X}}|^{2\epsilon_4}_{h_{\widetilde{X}}} \right) , 
\end{eqnarray*}
where $\epsilon=(\epsilon_1, \epsilon_2, \epsilon_3, \epsilon_4)$. 

For any sufficiently small $\epsilon_1>0$ and $\epsilon_4>0$, we can choose $0<\epsilon_3<< \epsilon_2 << \epsilon_1$ such that  
$\chi_{t', \epsilon}$ is a K\"ahler metric with conical singularities along smooth divisors $E_i$, $F_j$ and $\widetilde{X}'$ with  normal crossings of the same cone angle $2\pi \epsilon_4$.  

Let 
$$   \varphi_{t', \epsilon} =    \varphi_{t'} - f_\epsilon, $$
where 
$$f_\epsilon = \epsilon_1 \log |\sigma_{\mathcal{D}}|^2_{h_{\mathcal{D}}} + \epsilon_2   \log |\sigma_{\widetilde{X}'}|^2_{h_{\widetilde{X}'}}+ \epsilon_3     \left(  \sum_{i=1}^{I} |\sigma_{E_i}|^{2\epsilon_4}_{h_{E_i}} + \sum_{j=1}^{J} |\sigma_{F_j}|^{2\epsilon_4}_{h_{F_j}} + |\sigma_{\widetilde{X}'}|^{2\epsilon_4}_{h_{\widetilde{X}'}} \right). $$ 
Then we have on $\mathcal{X}'_{t'} $ for $t' \in (B')^*$, 
$$
 (   \chi_{t', \epsilon} + \ddbar   \varphi_{t', \epsilon} )^n =e^{    \varphi_{t', \epsilon} + f_\epsilon }   \Omega_{t'}.
$$
We can now apply the maximum principle to $   \varphi_{t', \epsilon}$ on each $\mathcal{X}'_{t'}$. Suppose $   \varphi_{t', \epsilon}$ achieves its minimum at $p_{t', \epsilon}$ in $\mathcal{X}'_{t'}$ for any given $t' \in (B')^*$. Then 
\begin{eqnarray*}
\inf_{\mathcal{X}'_{t'} } \varphi_{t', \epsilon}  
&\geq & \inf_{\mathcal{X}'_{t'}}  \log \left( \frac{ ( \chi_{t', \epsilon})^n}{e^{f_\epsilon} \Omega_{t'}} \right)
\\
&= & \inf_{\mathcal{X}'_{t'}} \log \left(  \frac{ dt' \wedge d\overline{t'} \wedge ( \chi_{t', \epsilon} )^n}{e^{f_\epsilon} dt' \wedge d\overline{t'} \wedge \Omega_{t'}}     \right)
\\
&\geq& \inf_{\mathcal{X}'} \log \left(  \frac{ dt' \wedge d\overline{t'} \wedge ( \chi_{t', \epsilon} )^n}{e^{f_\epsilon} dt' \wedge d\overline{t'} \wedge \Omega_{t'}}     \right)
\\
\end{eqnarray*}
By the normal crossings of the smooth components in $\mathcal{X}'_0$ from the semi-stable reduction, we can cover $\mathcal{X}'_0$ by finitely many small coordinate charts $\{ U_\beta \}_{\beta=1}^\mathcal{B}$ in $\mathcal{X}'$. We can assume in $U_\beta$,  
$$t' = x z_1z_2... z_m, ~ \textnormal{for~ some} ~ m \leq n, $$
where $x$, $z_1$, ..., $z_n$ are local holomorphic coordinates. 
%
 %

We may assume that for some $\beta$, 
$$ \inf_{\mathcal{X}'} \log \left(  \frac{ dt' \wedge d\overline{t'} \wedge \chi_{t', \epsilon}^n}{e^{f_\epsilon} dt' \wedge d\overline{t'} \wedge \Omega_{t'}}     \right)
= \inf_{U_\beta}  \left(  \frac{ dt' \wedge d\overline{t'} \wedge \chi_{t', \epsilon}^n}{e^{f_\epsilon} dt' \wedge d\overline{t'} \wedge \Omega_{t'}}     \right) .$$

\begin{enumerate}

\item Suppose $\widetilde{X} \cap U_\beta \neq \phi$. We then assume that $\widetilde{X} = \{ x=0\}$ in $U_\beta$ and so $\{z_k=0\}$ corresponds to one of $E_i$, $F_j$ and $\widetilde{X}'$.  Let
 $$\omega_{\beta, \epsilon_4}  = \sqrt{-1} \left( dx\wedge d\overline x +  \sum_{k=1}^m |z_k |^{-2+2\epsilon_4} dz_k \wedge d\overline{z}_k+  \sum_{k=m+1}^n dz_k \wedge d\overline{z}_k \right)$$
 be the flat conical K\"ahler metric on $U_\beta$. For each suitable sufficiently small $\epsilon_4$, there exists $C_1=C_1 >0$, 
$$ C_1^{-1}  \omega_{\beta, \epsilon_4} \leq \chi_{t', \epsilon} \leq C_1 \omega_{\beta, \epsilon_4}, $$
where $\epsilon= (\epsilon_1, \epsilon_2, \epsilon_3, \epsilon_4)$.
Since
$$  dt'\wedge d \overline{ t'} = \left(z_1...z_m dx + \sum_{k=1}^m \frac{x z_1... z_m} {z_k} dz_k \right)\wedge \left(\overline z_1... \overline z_m d\overline x + \sum_{k=1}^m \frac{\overline x \overline z_1... \overline z_m}{\overline z_k } d \overline z_k \right), $$
there exists $C_2= C_2 >0$ such that 
$$\sqrt{-1} dt' \wedge  d\overline{t'} \wedge (\chi_{t', \epsilon} )^n\geq (\sqrt{-1})^{n+1} C_2 |z_1z_2...z_m|^{2\epsilon_4}     dx\wedge d\overline x \wedge dz_1\wedge d\overline z_1 \wedge ...\wedge dz_n \wedge d\overline z_n. $$
On the other hand, by the observation in Section 2, $\sqrt{-1} dt' \wedge d \overline{t'} \wedge \Omega_{t'}$ is a smooth nonnegative real valued $(2n+2)$-form on $\mathcal{X}'$ and so there exists $C_3>0$ such that 
$$(\sqrt{-1})^{n+1} dt' \wedge d \overline{t'} \wedge\Omega_{t'}  \leq C_3  (\sqrt{-1})^{n+1}dx\wedge d\overline x \wedge dz_1\wedge d\overline z_1 \wedge ...\wedge dz_n \wedge d\overline z_n. $$
By combining the above estimates, we have
$$\inf_{U_\beta}  \left(  \frac{ dt' \wedge d\overline{t'} \wedge (\chi_{t', \epsilon})^n}{e^{f_\epsilon} dt' \wedge d\overline{t'} \wedge \Omega_{t'}}     \right) \geq  C_3 \inf_{U_\beta} |z_1... z_m|^{2\epsilon_4- 2\epsilon_2}, $$
since $e^{f_\epsilon}$ vanishes along each $z_i$ of order at least $2\epsilon_2$, $i=1, ..., n$. 
As we always choose $0< \epsilon_4 << \epsilon_2$, there exists $C_4>0$, such that
$$ \inf_{U_\beta} \log \left(  \frac{ dt' \wedge d\overline{t'} \wedge ( \chi_{t', \epsilon} )^n}{e^{f_\epsilon} dt' \wedge d\overline{t'} \wedge \Omega_{t'}}     \right) \geq  -  C_4. $$

\item Suppose $\widetilde{X}\cap U_\beta =\phi$. We let
 $$\omega_{\beta, \epsilon_4}  = \sqrt{-1} \left( |x|^{-2+ 2\epsilon_4} dx\wedge d\overline x +   \sum_{k=1}^m  |z_k |^{-2+2\epsilon_4} dz_k \wedge d\overline{z}_k +  \sum_{k=m+1}^n  dz_k\wedge d\overline{z}_k \right) $$
 be the flat conical K\"ahler metric on $U_\beta$. For each suitable sufficiently small $\epsilon$, there exists $C_5=C_5(\epsilon)>0$, 
$$ C_5^{-1}~  \omega_{\beta, \epsilon_4} \leq \chi_{t', \epsilon} \leq C_5 ~ \omega_{\beta, \epsilon_4}  . $$
%
%
Then there exist $C_6 = C_6(\epsilon)>0$ and $C_7=C_7(\epsilon)>0$ such that
$$ \inf_{U_\beta} \left(  \frac{ dt' \wedge d\overline{t'} \wedge ( \chi_{t', \epsilon} )^n}{e^{f_\epsilon} dt' \wedge d\overline{t'} \wedge \Omega_{t'}}     \right) \geq   C_6  \inf_{U_\beta} \left(  |x z_1z_2... z_m|^{-2\epsilon_2 + 2\epsilon_4}  \right) \geq C_7,$$
 for $\epsilon_4<<\epsilon_2$.
\medskip

\end{enumerate}
Combining the above estimates, there exists $C_8=C_8(\epsilon) >0$ such that for all $t'\in (B')^*$, we have 
$$\inf_{\mathcal{X}'_t} \varphi_{t', \epsilon} \geq - C_8. $$ 
The lemma follows immediately from the relation between $\varphi_{t'}$ and $\varphi_{t', \epsilon}$. 

\end{proof}

Since we can obtain estimates for $\varphi_{t'}$ near any component of $\mathcal{X}'_0$, Lemma \ref{54} immediately implies the following local estimates for $\varphi_t$ on $\mathcal{X}$.  

\begin{corollary} \label{lbdf} Let $\mathcal{S}_{\mathcal{X}_0}$ be the singular set of $\mathcal{X}_0$. Then for any compact subset $K\subset \subset \mathcal{X} \setminus \mathcal{S}_{\mathcal{X}_0}$, there exists $C_K>0$ such that 
\begin{equation}
\inf_{t\in B^*} \inf_{K\cap \mathcal{X}_t} \varphi_t \geq - C_K.  
\end{equation}

\end{corollary}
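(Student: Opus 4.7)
My approach is to lift the estimate of Lemma \ref{54} from $\mathcal{X}'$ down to $\mathcal{X}$ through the proper birational map $\Psi' = f' \circ \Psi$, using the hypothesis $K \cap \mathcal{S}_{\mathcal{X}_0} = \emptyset$ to keep the defining sections appearing on the right-hand side bounded below on the preimage of $K$.

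First I set $K' := (\Psi')^{-1}(K) \subset \mathcal{X}'$, which is compact since $\Psi'$ is proper, and observe that for $t = (t')^d$ with $t \neq 0$ the restriction $\Psi'|_{\mathcal{X}'_{t'}} : \mathcal{X}'_{t'} \to \mathcal{X}_t$ is an isomorphism (the fibre is smooth), so $\varphi_{t'} = \varphi_t \circ \Psi'|_{\mathcal{X}'_{t'}}$. The key geometric claim is that $\Psi'\bigl( \bigcup_i E_i \cup \bigcup_j F_j \bigr) \subset \mathcal{S}_{\mathcal{X}_0}$: since $\pi$ is smooth over $B^*$, $\mathrm{Sing}(\mathcal{X})$ lies in $\mathcal{X}_0$ and by flatness in $\mathcal{S}_{\mathcal{X}_0}$, and the only new singularities introduced by the base change $f'$ lie over the nodes of $\mathcal{X}_0$, which are again in $\mathcal{S}_{\mathcal{X}_0}$; the exceptional divisors of the resolution $\Psi$ then lie over this singular locus of $\mathcal{X} \times_B B'$. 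Since $K \cap \mathcal{S}_{\mathcal{X}_0} = \emptyset$, we deduce $K' \cap \bigl( \bigcup_i E_i \cup \bigcup_j F_j \bigr) = \emptyset$, and by compactness there is $c_0 > 0$ with $|\sigma_{E_i}|^2_{h_{E_i}}, |\sigma_{F_j}|^2_{h_{F_j}} \geq c_0$ on $K'$.

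To handle the factors $|\sigma_{\widetilde{X}'}|^2$ I iterate Lemma \ref{54} over the components $\alpha = 1, \dots, \mathcal{A}$, writing $\widetilde{X}'_\alpha = \bigcup_{\beta \neq \alpha}\widetilde{X}_\beta$. For $\beta \neq \alpha$ the intersection $\widetilde{X}_\alpha \cap \widetilde{X}_\beta$ maps into the nodes $X_\alpha \cap X_\beta \subset \mathcal{S}_{\mathcal{X}_0}$, so $K' \cap \widetilde{X}_\alpha \cap \widetilde{X}_\beta = \emptyset$. Choose open neighborhoods $V_\alpha \supset K' \cap \widetilde{X}_\alpha$ in $\mathcal{X}'$ that are disjoint from $\widetilde{X}_\beta$ for $\beta \neq \alpha$ and from the exceptional divisors, so that $|\sigma_{\widetilde{X}'_\alpha}|^2 \geq c_\alpha > 0$ on $V_\alpha$. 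The compact set $K' \setminus \bigcup_\alpha V_\alpha$ is disjoint from $\mathcal{X}'_0$, hence contained in $(\pi')^{-1}(\{|t'| \geq \delta_0\})$ for some $\delta_0 > 0$; thus for $|t'| < \delta_0$ each slice $K' \cap \mathcal{X}'_{t'}$ is contained in $\bigcup_\alpha V_\alpha$, and applying Lemma \ref{54} on $V_\alpha$ with component $\alpha$ gives $\varphi_{t'} \geq \epsilon \log(c_0^{I+J} c_\alpha) - C_\epsilon \geq -C_K$ there. For $|t|$ bounded away from $0$ the bound is routine since $\{\varphi_t\}$ depends smoothly on $t$. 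Pushing back to $\mathcal{X}$ via $\Psi'$ yields the claimed estimate.

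The main technical step is the geometric claim $\Psi'(E_i \cup F_j) \subset \mathcal{S}_{\mathcal{X}_0}$, which requires a careful local analysis of how the base change and subsequent resolution behave at the singularities of $\mathcal{X}$ and at the nodes of $\mathcal{X}_0$; everything else is a relatively clean compactness and continuity argument.
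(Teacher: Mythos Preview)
Your proposal is correct and follows the same route as the paper: the paper's proof is a one-line remark that Lemma \ref{54}, applied near each component of $\mathcal{X}'_0$, immediately gives the local estimate on $\mathcal{X}$, and what you have written is exactly the unpacking of that remark. Your geometric claim that the exceptional divisors and cross-component intersections of the semistable reduction map into $\mathcal{S}_{\mathcal{X}_0}$ is the content the paper is implicitly using, and your component-by-component covering argument to handle the $|\sigma_{\widetilde{X}'_\alpha}|$ factors is the right way to make the word ``immediately'' precise.
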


We will need several versions of the Schwarz lemma and the following lemma is the first among them. 

\begin{lemma} \label{sch1} We define the barrier function $F_{\widetilde{X}}= \prod_{i=1, j=1}^{I,J}  |\sigma_{ E_i} |_{h_{E_i}}^2  |\sigma_{F_j}|^2_{h_{F_j}}  |\sigma_{\widetilde{X}'}|^2_{h_{\widetilde{X}'}}$ on $\mathcal{X'}$, given the component $\widetilde X$ in $\mathcal{X}'_0$.   Let $\omega_{t'} = \chi_{t'}+ \ddbar \varphi_{t'}$ be the K\"ahler-Einstein metric on $\mathcal{X}'_{t'}$ for $t'\in (B')^*$. Then for any $\epsilon>0$, there exists $C_\epsilon>0$ such that for all $t ' \in (B')^*$, we have on $\mathcal{X}'_{t'}$ 
\begin{equation}
\omega_{t'} \geq  C_\epsilon  \left(\left. F_{\widetilde{X}} \right|_{\mathcal{X}'_{t'}} \right)^{\epsilon} \chi_{t'} . 
\end{equation}

\end{lemma}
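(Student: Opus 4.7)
The plan is a Chern--Lu type Schwarz lemma argument carried out on each smooth compact fiber $\mathcal{X}'_{t'}$ (for $t' \neq 0$), with the difficulty lying entirely in extracting constants uniform in $t'$. Fix $\epsilon > 0$ and consider the quantity
\[
H = \log \tr_{\omega_{t'}} \chi_{t'} + \epsilon \log F_{\widetilde X} - A \varphi_{t'} + \delta \log |\sigma_{\mathcal D}|^2_{h_{\mathcal D}}
\]
on $\mathcal{X}'_{t'}$, where $A = A(\epsilon) > 0$ is large and $\delta > 0$ is a small Tsuji perturbation (with $\sigma_{\mathcal D}, h_{\mathcal D}$ as in the proof of Lemma \ref{2ndem}). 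Since the divisors $E_i, F_j, \widetilde X'$ and $\mathcal{D}$ all lie in the central fiber, they miss $\mathcal{X}'_{t'}$ when $t' \neq 0$; hence $H$ is smooth on the compact manifold $\mathcal{X}'_{t'}$ and attains its maximum at some $p^*$.

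The central uniform ingredient is a Chern--Lu inequality with a bisectional curvature bound on $\chi_{t'}$ that is independent of $t'$. For $t' \neq 0$ the restriction $\Psi'_{t'}: \mathcal{X}'_{t'} \to \mathcal{X}_t$ is a biholomorphism (since $\Psi$ is an isomorphism away from the central fiber), and under it $\chi_{t'}$ is isometric to $\chi_t$, the restriction of the Fubini--Study metric via the pluricanonical embedding $\mathcal{X}_t \subset \mathbb{CP}^N$. By Gauss--Codazzi the bisectional curvature of a K\"ahler submanifold of $(\mathbb{CP}^N, \omega_{FS})$ is bounded above by that of $\omega_{FS}$; we obtain a constant $K > 0$, independent of $t'$, with
\[
\Delta_{\omega_{t'}} \log \tr_{\omega_{t'}} \chi_{t'} \geq -1 - K\, \tr_{\omega_{t'}} \chi_{t'},
\]
since $\ric(\omega_{t'}) = -\omega_{t'}$. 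Combining with the fact that $\ddbar \log F_{\widetilde X}$ and $\ddbar \log |\sigma_{\mathcal D}|^2_{h_{\mathcal D}}$ extend to smooth $(1,1)$-forms on $\mathcal{X}'$ dominated by a multiple of the Tsuji-positive K\"ahler form $\chi - \delta_0 \ric(h_{\mathcal D})$, and with $\Delta_{\omega_{t'}} \varphi_{t'} = n - \tr_{\omega_{t'}} \chi_{t'}$ from the K\"ahler--Einstein equation, one finds
\[
\Delta_{\omega_{t'}} H \geq (A - K - C_1 \epsilon - C_2 \delta)\,\tr_{\omega_{t'}} \chi_{t'} - An - 1.
\]
Choosing $A$ large (and $\delta$ small), the maximum principle at $p^*$ gives $\tr_{\omega_{t'}} \chi_{t'}(p^*) \leq C_3$.

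To finish, substitute into $H(p^*)$ and apply Lemma \ref{54} with $\epsilon_0 = \epsilon/A$, so that
\[
- A \varphi_{t'}(p^*) \leq -\epsilon \log F_{\widetilde X}(p^*) + A\, C_{\epsilon/A}.
\]
The $\epsilon \log F_{\widetilde X}(p^*)$ terms cancel, and after letting $\delta \to 0$ and using the uniform upper bound $\varphi_{t'} \leq C_0$ of Corollary \ref{5upb}, we obtain $\log \tr_{\omega_{t'}} \chi_{t'} + \epsilon \log F_{\widetilde X} \leq C$ on $\mathcal{X}'_{t'}$, uniformly in $t'$. Equivalently, $\tr_{\omega_{t'}} \chi_{t'} \leq C\, F_{\widetilde X}^{-\epsilon}$, and since $\tr_{\omega_{t'}} \chi_{t'}$ is the sum of the reciprocal eigenvalues of $\omega_{t'}$ with respect to $\chi_{t'}$, this translates to the desired lower bound $\omega_{t'} \geq C_\epsilon\, F_{\widetilde X}^{\epsilon}\, \chi_{t'}$.

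The main obstacle is the uniform bisectional curvature bound for $\chi_{t'}$: on the total space $\mathcal{X}'$ the form $\chi$ is only semi-positive (degenerating along the exceptional locus of $\Psi'$), so a direct curvature computation on the fiber appears to blow up as $t' \to 0$. Recognising $\chi_{t'}$ as the pullback of a fixed projective submanifold metric via the biholomorphism $\Psi'_{t'}$ (valid precisely because $t' \neq 0$) is the decisive step that produces a $t'$-independent constant and makes the Chern--Lu argument quantitative; the remaining bookkeeping with the Tsuji term and Lemma \ref{54} serves only to absorb the smooth curvature of $\log F_{\widetilde X}$ and to transfer the weight $F_{\widetilde X}^\epsilon$ from the potential bound to the metric lower bound.
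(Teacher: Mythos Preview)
Your approach is essentially the same as the paper's: a Chern--Lu Schwarz lemma on each fiber, with the key uniform bisectional curvature bound for $\chi_{t'}$ coming from the projective embedding (the paper uses $H=\log\tr_{\omega_{t'}}\chi_{t'}+\epsilon\log F_{\widetilde X}-2A\varphi_{t'}$ without your Tsuji term, but the argument is otherwise identical). One small caveat: for your Laplacian inequality to absorb the curvature of $\log F_{\widetilde X}$ via the Tsuji--K\"ahler form $\chi-\delta_0\,\mathrm{Ric}(h_{\mathcal D})$ you need $\delta\gtrsim\epsilon$, so you cannot literally send $\delta\to 0$; instead use that $\mathrm{supp}\,\mathcal D$ lies in $\{F_{\widetilde X}=0\}$ to absorb $-\delta\log|\sigma_{\mathcal D}|^2$ into $-C\epsilon\log F_{\widetilde X}$ and then relabel $\epsilon$.
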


\begin{proof} Let 
$$H = \log \left(\left. F_{\widetilde{X}} \right|_{\mathcal{X}'_{t'}} \right)^{\epsilon} tr_{\omega_{t'}} (\chi_{t'})  - 2A \varphi_{t'} $$ 
be the smooth function on $\mathcal{X}'_{t'}$ for $t' \in (B')^*$. Since $\chi_{t'}$ is the pullback of the Fubini-Study metric from a projective  embedding of $\mathcal{X}$, its curvature is  uniformly bounded. Straightforward calculations show that 
$$\Delta_{t'} H \geq A tr_{\omega_{t'}}(\chi_{t'}) - C_1, $$
for sufficiently large $A>0$ and some uniform constant $C_1>0$ dependent on $A$. Applying the maximum principle and the estimate $\varphi_{t'}$ from Lemma \ref{54}, there exists $C_2>0$ such that for all $t' \in (B')^*$, $$\sup_{\mathcal{X}'_{t'}} H \leq C_2.$$
The lemma follows immediately from the uniform upper bound for $\varphi_{t'}$ in Corollary \ref{5upb}.

\end{proof}

\begin{lemma} \label{55} For any $k>0$ and any compact set $K\subset\subset \mathcal{X} \setminus \mathcal{S}_{\mathcal{X}_0}$, there exists $C_{k,K}>0$ such that for all $t\in B^*$, 
$$||\varphi_t||_{C^k(K\cap \mathcal{X}_t, \chi_t)} \leq C_{k,K}. $$

\end{lemma}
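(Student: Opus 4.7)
The plan is to prove this $C^k$ estimate in three stages: first a uniform $L^\infty$ bound on $\varphi_t$ over a slightly larger compact set, then a localized Laplacian (second order) estimate, and finally a Schauder bootstrap off an Evans-Krylov $C^{2,\alpha}$ estimate. Concretely, choose nested compact sets $K \subset\subset K' \subset\subset \mathcal{X}\setminus \mathcal{S}_{\mathcal{X}_0}$. By Corollary \ref{5upb} and Corollary \ref{lbdf} there is a uniform bound $\sup_{t\in B^*} ||\varphi_t||_{L^\infty(K'\cap \mathcal{X}_t)} \leq M$. Since $K'$ is compactly contained in $\mathcal{X}\setminus\mathcal{S}_{\mathcal{X}_0}$, both $\chi$ and $\Omega$ are smooth in an open neighborhood of $K'$, so the restrictions $\chi_t, \Omega_t$ on $K'\cap\mathcal{X}_t$ have uniform $C^\infty$ bounds, and in particular the bisectional curvature of $\chi_t$ on $K'\cap\mathcal{X}_t$ is uniformly bounded in $t \in B^*$.

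For the second order estimate I would run Yau's Laplacian inequality on $\mathcal{X}'_{t'}$. Using Lemma \ref{sch1} together with the barrier $F_{\widetilde X}$, one considers a quantity of the form
$$H = \log\mathrm{tr}_{\chi_{t'}}(\omega_{t'}) - A\varphi_{t'} + \epsilon \log F_{\widetilde X},$$
with $A$ chosen large relative to the bisectional curvature of $\chi_{t'}$ on the preimage of $K'$, and $\epsilon>0$ small. The key properties are that $\log F_{\widetilde X} \to -\infty$ along the divisors in $\mathcal{X}'_0$ so that $H$ tends to $-\infty$ there, and the standard curvature computation gives $\Delta_{\omega_{t'}}H \geq (A - C)\mathrm{tr}_{\omega_{t'}}(\chi_{t'}) - C'$ for $A$ large. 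Applying the maximum principle yields $\mathrm{tr}_{\omega_{t'}}(\chi_{t'}) \leq C$ at the maximum; combined with the upper bound on $\varphi_{t'}$ from Corollary \ref{5upb} and the fact that $F_{\widetilde X}$ is bounded below away from $\mathcal{S}_{\mathcal{X}_0}$, evaluating at an arbitrary point of $K\cap\mathcal{X}_t$ gives $\mathrm{tr}_{\chi_t}(\omega_t) \leq C_K$ uniformly. Feeding this back into the Monge-Amp\`ere equation (\ref{famke}) and using the $L^\infty$ bound on $\varphi_t$, one concludes that $\omega_t$ and $\chi_t$ are uniformly equivalent on $K\cap\mathcal{X}_t$.

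With uniform ellipticity of (\ref{famke}) on $K\cap\mathcal{X}_t$ established and with smooth, uniformly controlled coefficients $\chi_t$, $\Omega_t$, the complex Evans-Krylov theorem (applied in fixed holomorphic coordinate charts covering $K'$ in $\mathcal{X}\setminus\mathcal{S}_{\mathcal{X}_0}$) produces a uniform $C^{2,\alpha}$ bound for $\varphi_t$ on a slightly smaller compact subset. Differentiating the Monge-Amp\`ere equation once turns it into a linear elliptic equation for $\partial\varphi_t$ with smooth coefficients of uniform bounds; standard Schauder estimates then give $C^{3,\alpha}$ bounds, and iterating this argument $k$ times produces the desired uniform $C^k$ estimate on $K\cap\mathcal{X}_t$.

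The only non-routine step is the localized Laplacian estimate: the main obstacle is choosing the auxiliary barrier correctly so that the maximum of the test quantity is attained in the interior of a region on which $\varphi_t$ and the background data are already under control. This is exactly where Lemma \ref{sch1}, and the fact that $F_{\widetilde X}$ is bounded away from zero on the preimage of any compact subset of $\mathcal{X}\setminus \mathcal{S}_{\mathcal{X}_0}$ in the semistable reduction $\mathcal{X}'$, are essential; once this is arranged the remaining bootstrap is standard elliptic regularity.
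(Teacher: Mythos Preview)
Your bootstrap strategy (uniform $L^\infty$, then uniform metric equivalence, then Evans--Krylov and Schauder) is the right outline, and the paper follows the same scheme. But there is a genuine gap in your second-order step.

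You try to bound $\mathrm{tr}_{\chi_{t'}}(\omega_{t'})$ by applying Yau's $C^2$ inequality to $H=\log\mathrm{tr}_{\chi_{t'}}(\omega_{t'})-A\varphi_{t'}+\epsilon\log F_{\widetilde X}$ and using the maximum principle on $\mathcal{X}'_{t'}$. Two problems. First, for $t'\neq 0$ the fibre $\mathcal{X}'_{t'}$ does not meet the divisors in $\mathcal{X}'_0$, so $F_{\widetilde X}$ is a smooth positive function on $\mathcal{X}'_{t'}$ and $\epsilon\log F_{\widetilde X}$ does not tend to $-\infty$ anywhere; the barrier does not localize the maximum of $H$ to the preimage of $K'$. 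Second, Yau's inequality in this direction requires a \emph{lower} bound on the holomorphic bisectional curvature of $\chi_{t'}$. Such a bound holds on $K'\cap\mathcal{X}_t$ (as you note), but it is not uniform on all of $\mathcal{X}_t$: as $t\to 0$ the second fundamental form of $\mathcal{X}_t\subset\mathbb{P}^N$ blows up near $\mathcal{S}_{\mathcal{X}_0}$, so the intrinsic bisectional curvature of $\chi_t$ has no uniform lower bound. Since the maximum of $H$ may lie outside the preimage of $K'$, the inequality $\Delta_{\omega_{t'}}H\geq(A-C)\,\mathrm{tr}_{\omega_{t'}}(\chi_{t'})-C'$ is not justified at that point.

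The paper circumvents this entirely by using Lemma~\ref{sch1} in the \emph{other} direction. That lemma bounds $\mathrm{tr}_{\omega_{t'}}(\chi_{t'})$ (the Chern--Lu/Schwarz direction), which requires only an \emph{upper} bound on the bisectional curvature of $\chi$; this is uniform because holomorphic bisectional curvature decreases on complex submanifolds of $\mathbb{P}^N$. Lemma~\ref{sch1} therefore gives $\omega_t\geq c_K\chi_t$ on $K\cap\mathcal{X}_t$. Combining this with the Monge--Amp\`ere equation $\omega_t^n=e^{\varphi_t}\Omega_t$, the $L^\infty$ bound on $\varphi_t$ from Corollaries~\ref{5upb} and~\ref{lbdf}, and the fact that $\Omega_t/\chi_t^n$ is uniformly bounded on $K$ (both are smooth and nondegenerate on the smooth total space near $K$), one gets $\omega_t^n\leq C_K\chi_t^n$ and hence $\omega_t\leq C'_K\chi_t$ on $K\cap\mathcal{X}_t$. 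No new Laplacian computation is needed. After that, your Evans--Krylov plus Schauder bootstrap goes through exactly as you wrote.
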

\begin{proof}  By Lemma \ref{sch1} and the original complex Monge-Amp\`ere equation (\ref{famke}) for $\varphi_t$, $\omega_t$ is uniformly bounded above and below with respect to $\chi_t$ away from the singularities of $\mathcal{X}_0$, where $\pi$ is also nondegenerate. Then standard Schauder estimates and the linear estimates after linearizing the Monge-Amp\`ere equation (\ref{famke})  can be established locally, which gives uniform higher order regularity for $\varphi_t$. 

\end{proof}

For any nonsingular point $p_0$ in $\mathcal{X}_0$, there exists an open neighborhood $U$ of $p_0$ in $\mathcal{X}$ such that $\mathcal{X}_t\cap U$  are all biholomorphic to each other and $\chi_t|_{\mathcal{X}_t \cap U}$ are all equivalent for all $t\in B$.  We then can pick any sequence $t_j \in B^*$ converging to $0$. By the uniform estimates for $\varphi_t$, $\varphi_{t_j}$ converges smoothly to a smooth function $\varphi_0, $ on $\mathcal{R}_{\mathcal{X}_0}$. Furthermore, $\varphi_0$ satisfies the following conditions. 

\begin{enumerate}

\item There exists $C>0$ such that $$ \sup_{\mathcal{R}_{\mathcal{X}_0}} \varphi_0 \leq C. $$

\item For any $p \in \mathcal{R}_{\mathcal{X}_0}$, there exists an effective divisor $G_p$ numerically equivalent to $K_{\mathcal{X}_0}$ such that $G_p$ does not vanish at $p$ and $G_p$ contains $\textnormal{LCS}(\mathcal{X}_0)$. For any $\epsilon>0$, there exists $C_\epsilon>0$
$$\varphi_0 - \epsilon \log |\sigma_{G_p}|^2_{h_{\Omega_0}} \geq - C_\epsilon, $$
where $h_{\Omega_0} = \left( \Omega|_{\mathcal{X}_0}\right)^{-1} $ is a hermtian metric on $K_{\mathcal{X}_0}$,  where $\chi_0= \chi|_{\mathcal{X}_0}$. This estimate follows from Lemma \ref{54} as $\varphi_{t'}$ is milder by any log poles along the exceptional divisor and the normal crossings among components of $\widetilde{\mathcal{X}_0}$.

\medskip

\item $\varphi_0$ solves the following equation on $\mathcal{R}_{\mathcal{X}_0}$
$$(\chi_0 + \ddbar \varphi_0)^n = e^{\varphi_0}\Omega_0, $$

\end{enumerate} 

By the uniqueness in Lemma \ref{uniq}, $\varphi_0$ must coincide with the unique solution constructed in Lemma \ref{existsol} and Lemma \ref{uniq}. Hence we have established the following lemma. 

\begin{lemma} Let $\omega_t = \chi_t + \ddbar \varphi_t$ be the K\"ahler-Einstein metric on $\mathcal{X}_t$, $t\in B^*$ with $$(\chi_t + \ddbar \varphi_t)^n = e^{\varphi_t} \Omega_t.$$
Then $\varphi_t$ converges to a unique a unique $\varphi_0 \in PSH(\mathcal{X}_0, \chi_0)\cap C^\infty(\mathcal{R}_{\mathcal{X}_0}) \cap L^\infty_{loc}(\mathcal{X}_0\setminus \textnormal{LCS}(\mathcal{X}_0))$, where $\textnormal{LCS}(\mathcal{X}_0)$ is the non-log terminal locus of $\mathcal{X}_0$. 
Furthermore, the following holds.

\begin{enumerate}

\item $\omega_0 = \chi_0 + \ddbar \varphi_0$ is  K\"ahler-Einstein current on $\mathcal{X}_0$ with 
$$(\chi_0+ \ddbar \varphi_0)^n = e^{\varphi_0} \Omega_0.$$

\item $\varphi_0$ tends to $-\infty$ near $\textnormal{LCS}(\mathcal{X}_0)$.

\medskip

\item  $\int_{\mathcal{R}_{\mathcal{X}_0}} \omega_0^n = [K_{\mathcal{X}_t} ]^n$, for all $t\in B$. 

\end{enumerate} 
In particular, $\varphi_0$ coincides with the unique solution in Lemma \ref{uniq}. 

\end{lemma}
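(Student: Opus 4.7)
The plan is to assemble the uniform estimates already proved into subsequential smooth convergence on $\mathcal{R}_{\mathcal{X}_0}$, verify that any limit meets the hypotheses of the uniqueness Lemma \ref{uniq}, and then upgrade subsequential convergence to full convergence by uniqueness. First I would fix a sequence $t_j \to 0$ in $B^*$ and apply Corollary \ref{5upb} (uniform upper bound), Corollary \ref{lbdf} (local uniform lower bound away from $\mathcal{S}_{\mathcal{X}_0}$), and Lemma \ref{55} (local $C^k$ estimates on any compact $K \subset\subset \mathcal{X} \setminus \mathcal{S}_{\mathcal{X}_0}$). A standard diagonal/Arzel\`a–Ascoli argument on an exhaustion of $\mathcal{R}_{\mathcal{X}_0}$ then extracts a subsequence along which $\varphi_{t_j}$ converges smoothly on compact subsets of $\mathcal{R}_{\mathcal{X}_0}$ to some limit $\varphi_0 \in C^\infty(\mathcal{R}_{\mathcal{X}_0})$. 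Passing equation (\ref{famke}) to the limit gives $(\chi_0 + \ddbar \varphi_0)^n = e^{\varphi_0} \Omega_0$ on $\mathcal{R}_{\mathcal{X}_0}$.

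Second, I would verify the required global and singular-locus properties of $\varphi_0$. The uniform upper bound plus plurisubharmonicity of each $\varphi_{t_j}$ shows that $\varphi_0$ extends, by the standard removal-of-singularities for bounded quasi-psh functions, to an element of $PSH(\mathcal{X}_0, \chi_0)$ after pulling back to a log resolution and descending; alternatively, one argues on the resolution $Y$ where each pullback $\varphi_{t_j}$ is $\chi$-psh. For the behavior near $\textnormal{LCS}(\mathcal{X}_0)$, I would invoke Lemma \ref{54} on the semi-stable reduction $\pi': \mathcal{X}' \to B'$: this gives, for every $\epsilon>0$, a bound
\[
\varphi_{t'} \geq \epsilon \log \left( \prod_{i,j} |\sigma_{E_i}|^2_{h_{E_i}} |\sigma_{F_j}|^2_{h_{F_j}} |\sigma_{\widetilde{X}'}|^2_{h_{\widetilde{X}'}} \right) - C_\epsilon,
\]
uniformly in $t' \in (B')^*$. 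Taking the subsequential limit on $\mathcal{R}_{\mathcal{X}_0}$, pushing down via $\Psi'$, and using the algebraic comparison between the pullback divisors and an effective divisor $G_p$ as in Lemma \ref{effdiv} (so that the zero locus of $G_p$ contains $\textnormal{LCS}(\mathcal{X}_0)$ and misses an arbitrary regular point $p$), this yields a uniform estimate $\varphi_0 \geq \epsilon \log |\sigma_{G_p}|^2_{h_\Omega} - C_\epsilon$ near the non-log-terminal locus. In particular $\varphi_0 \in L^\infty_{loc}(\mathcal{X}_0 \setminus \textnormal{LCS}(\mathcal{X}_0))$, and together with the uniform upper bound this places $\varphi_0$ within the class to which Lemma \ref{uniq} applies.

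Third, I would apply Lemma \ref{uniq} to conclude that the subsequential limit $\varphi_0$ coincides with the unique canonical K\"ahler–Einstein potential from Lemma \ref{existsol}. Since the limit is independent of the subsequence, the full family $\varphi_t$ converges to $\varphi_0$ as $t \to 0$. Properties (1), (2) and (3) of the statement are then inherited from Lemma \ref{existsol}: (1) is the Monge–Amp\`ere equation and the fact that the mass does not charge $\mathcal{S}_{\mathcal{X}_0}$; (2) is property (4) of Lemma \ref{existsol}; and (3) combines the mass identity $\int_{\mathcal{R}_{\mathcal{X}_0}} \omega_0^n = [K_{\mathcal{X}_0}]^n$ from Lemma \ref{existsol}(5) with the flatness and $\pi$-ampleness of $K_{\mathcal{X}/B}$, which give $[K_{\mathcal{X}_t}]^n = [K_{\mathcal{X}_0}]^n$ for every $t \in B$.

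The main obstacle I anticipate is the precise bookkeeping in the second step: translating the barrier estimate from Lemma \ref{54}, which lives on the components of $\mathcal{X}'_0$ coming from the semi-stable reduction (proper transform $\widetilde{X}$, exceptional divisors $E_i$ and $F_j$, and complementary components $\widetilde{X}'$), into a bound on $\mathcal{X}_0$ written in terms of an effective divisor $G_p$ produced by Lemma \ref{effdiv}. One has to confirm that the strictly positive powers of the components with discrepancy $a_i \geq 0$ are harmless while the $F_j$ with $0 < b_j \leq 1$ give precisely the LCS contribution, and that the factor coming from $\widetilde{X}'$ (which does not meet the component of $\mathcal{X}_0$ one is studying in a given chart) does not interfere. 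Once this is verified the rest of the proof is essentially a clean invocation of the uniqueness machinery from Section 3.
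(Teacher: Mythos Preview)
Your proposal is correct and follows essentially the same approach as the paper: extract subsequential smooth limits on $\mathcal{R}_{\mathcal{X}_0}$ from the uniform estimates of Corollary \ref{5upb}, Corollary \ref{lbdf}, and Lemma \ref{55}, verify via Lemma \ref{54} that any such limit satisfies the two-sided bound required by Lemma \ref{uniq}, and then use uniqueness to upgrade to full convergence and inherit properties (1)--(3) from Lemma \ref{existsol}. The paper handles the bookkeeping you flag in your final paragraph in one sentence (``This estimate follows from Lemma \ref{54} as $\varphi_{t'}$ is milder [than] any log poles along the exceptional divisor and the normal crossings among components of $\widetilde{\mathcal{X}_0}$''), so your caution there is well placed but not a genuine obstacle.
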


The following lemma establishes the uniform non-collapsing condition for the K\"ahler-Einstein manifolds $(\mathcal{X}_t, g_t)$, for all $t\in B^*$. 

\begin{lemma} \label{noncol} For any nonsingular point $p_0\in \mathcal{X}_0$, we can pick a smooth section $p(t): B \rightarrow \mathcal{X}$ such that $p(0) =p_0$. Then  there exists $c >0$ such that for all $t\in B^*$, 
\begin{equation}
Vol_{g_t} (B_{g_t}(p(t), 1)) \geq c,
\end{equation}
where $B_{g_t}(p(t), 1)$ is the unit geodesic ball centered at $p(t)$ in $(\mathcal{X}_t, g_t)$.

\end{lemma}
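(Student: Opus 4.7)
The plan is to reduce the non-collapsing at $p(t)$ to the uniform equivalence of $\omega_t$ and the smooth background metric $\chi_t$ on a fixed neighborhood of the smoothly varying base point. Since $p_0 \in \mathcal{R}_{\mathcal{X}_0}$ and $p : B \to \mathcal{X}$ is continuous with $p(0) = p_0$, there exists $\delta > 0$ and a compact neighborhood $K \subset \mathcal{X} \setminus \mathcal{S}_{\mathcal{X}_0}$ such that $p(t) \in K$ for all $|t| < \delta$. For $\delta \leq |t| \leq 1$, non-collapsing is automatic from compactness of $\pi^{-1}(\{|t| \geq \delta\})$ and the smooth variation of the K\"ahler-Einstein metrics away from the central fibre, so the issue is the regime $t \to 0$.

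On $K$, I would combine Lemma \ref{55}, which gives $\|\varphi_t\|_{C^2(K \cap \mathcal{X}_t, \chi_t)} \leq C_1$ uniformly in $t \in B^*$, with the lower bound from Lemma \ref{sch1}. Since $p_0$ lies over a smooth point of the component $\widetilde X$ away from all the divisors $E_i$, $F_j$, $\widetilde{X}'$, the barrier function $F_{\widetilde X}$ is bounded below by a positive constant on $K$ (after shrinking $K$ if necessary), yielding a uniform constant $C_2 > 0$ with
$$
C_2^{-1}\, \chi_t \;\leq\; \omega_t \;\leq\; C_2\, \chi_t \qquad \text{on } K\cap \mathcal{X}_t, \; t \in B^*.
$$

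Next, since $\chi$ is a fixed smooth K\"ahler form on $\mathcal{X}$ and $\pi$ is submersive near $p_0$, choose local holomorphic coordinates on $\mathcal{X}$ adapted to $\pi$ in which $\chi_t$ is uniformly equivalent to the Euclidean metric on a polydisc centered at $p(t)$. This yields $r_0, c_0 > 0$ (independent of $t$) such that the $\chi_t$-ball $B_{\chi_t}(p(t), r_0)$ is contained in $K \cap \mathcal{X}_t$ and $\mathrm{Vol}_{\chi_t}(B_{\chi_t}(p(t), r_0)) \geq c_0$.

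Combining the two inputs, the metric inequality gives $d_{g_t} \leq \sqrt{C_2}\, d_{\chi_t}$, so
$$
B_{\chi_t}\bigl(p(t), \min(r_0, 1)/\sqrt{C_2}\bigr) \;\subset\; B_{g_t}(p(t), 1),
$$
while $\omega_t^n \geq C_2^{-n} \chi_t^n$ yields
$$
\mathrm{Vol}_{g_t}(B_{g_t}(p(t), 1)) \;\geq\; C_2^{-n}\,\mathrm{Vol}_{\chi_t}\bigl(B_{\chi_t}(p(t), \min(r_0, 1)/\sqrt{C_2})\bigr) \;\geq\; c
$$
for some $c > 0$ independent of $t \in B^*$. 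No serious obstacle arises here; all the technical content has already been absorbed into Lemma \ref{sch1} (giving the lower bound on $\omega_t$) and Lemma \ref{55} (giving the upper bound and higher regularity), and this lemma is a direct geometric consequence once the base point is confined to a compact region of the regular locus.
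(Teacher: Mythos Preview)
Your proposal is correct and follows essentially the same approach as the paper: both arguments confine the base point to a compact subset of the regular locus, use the uniform metric equivalence $C^{-1}\chi_t \leq \omega_t \leq C\chi_t$ there (from Lemma \ref{sch1}, with the upper bound coming either from Lemma \ref{55} as you do or from the Monge--Amp\`ere equation together with Corollary \ref{5upb} as the paper implicitly does), and then transfer the $\chi_t$-volume of a small ball to a $g_t$-volume lower bound. The only cosmetic difference is that the paper lands in $B_{g_t}(p(t), C^{1/2}r)$ and then invokes Bishop--Gromov volume comparison to pass to the unit ball, whereas you shrink the $\chi_t$-ball from the outset so it fits directly inside $B_{g_t}(p(t),1)$; your route is slightly more elementary and avoids the comparison theorem, but the content is the same.
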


\begin{proof} Since $p_0$ is a nonsingular point of $\mathcal{X}_0$, there exists $r>0$ such that the geodesic ball $B_{\chi_0}(p_0, r) $ in $(\mathcal{X}_0, \chi_0)$ completely lies in the nonsingular part of  $\mathcal{X}_0$. Then there exists $c>0$ such that for all $t\in B$, 
$$Vol_{\chi_t} (B_{\chi_t}(p(t), r))>c. $$
By Lemma \ref{sch1},   there exists $C>0$ such that for all $t\in B^*$, 
$$C^{-1} \chi_t \leq \omega_t \leq C \chi_t$$
 on $B_{\chi_t}(p(t), r)$ in $(\mathcal{X}_t, \chi_t)$. This implies that 
$$B_{\chi_t} (p(t), r) \subset B_{g_t}(p(t), C^{1/2} r) $$
and so 
$$Vol_{g_t}(B_{g_t}(p(t), C^{1/2} r) \geq C^{-n} Vol_{\chi_t} (B_{\chi_t}(p(t), r)) \geq  C^{-n} c. $$
The lemma then immediately follows by the volume comparison theorem. 

\end{proof}

The proof of Lemma \ref{noncol} also shows that for any  given nonsingular point $p_0$ in $\mathcal{X}_0$, the non-collapsing condition holds uniformly for all points near  $p_0$.


\section{Gromov-Hausdorff convergence and partial $C^0$-estimates}

Let $\pi: \mathcal{X}\rightarrow B$ be a stable degeneration of smooth canonical models as considered in Section 4. Suppose
$$\mathcal{X}_0= \bigcup_{\alpha=1}^\mathcal{A} X_\alpha,$$
where each $X_\alpha$ is an irreducible component of $\mathcal{X}_0$. We pick any $\mathcal{A}$-tuple of nonsingular points 
$$(p_0^{1}, p_0^{2}, ..., p_0^{\mathcal{A}}), ~~p_0^{\alpha} \in X_\alpha \cap \mathcal{R}_{\mathcal{X}_0}, ~\alpha=1, ..., \mathcal{A}.$$

Let $(p_{t_j}^{1}, p_{t_j}^{2}, ..., p_{t_j}^{\mathcal{A}})$  a  sequence of $\mathcal{A}$-tuples of points $\in \mathcal{X}_{t_j}$ with $t_j \rightarrow 0$ such that  
$$(p_{t_j}^{1}, p_{t_j}^{2}, ..., p_{t_j}^{\mathcal{A}})  \rightarrow (p_0^{1}, p_0^{2}, ..., p_0^{\mathcal{A}}) $$
 with respect to the fixed reference metric $\chi$ on $\mathcal{X}$.  Let $g_{t_j}$ be the corresponding K\"ahler-Einstein metric on $\mathcal{X}_{t_j}$. We would like to study the Riemannian geometric convergence of $(\mathcal{X}_{t_j}, g_{t_j})$ as $t_j \rightarrow 0$.

\begin{lemma}  \label{GHcon}
Let $g_t$ be the unique K\"ahler-Einstein metric on $\mathcal{X}_t$ for $t\in B^*$.  
%
%
After passing to a subsequence, $(\mathcal{X}_{t_j},  g_{t_j}, (p_{t_j}^1, ..., p_{t_j}^\mathcal{A}))$ converges in pointed Gromov-Hausdorff topology to a metric length space 
$$(\mathbf{Y}, d_{\mathbf{Y}})= \coprod_{\beta=1}^\mathcal{B} (Y_\beta, d_\beta)$$
as a disjoint union of  metric length spaces $(Y_\beta, d_\beta),$ 
satisfying
\begin{enumerate}

\item $\mathbf{Y} = \mathcal{R}_{\mathbf{Y}} \cup \mathcal{S}_{\mathbf{Y}}$, where $\mathcal{R}_{\mathbf{Y}}$ and $\mathcal{S}_{\mathbf{Y}}$ are the regular and singular part of $\mathbf{Y}$.
 $\mathcal{R}_{\mathbf{Y}}$ is an open K\"ahler manifold  and $\mathcal{S}_{\mathbf{Y}}$ is closed of Hausdorff dimension no greater than $2n-4$. 

\medskip

\item $g_{t_j}$ converge smoothly to a K\"ahler-Einstein metric $g_{KE}$ on $\mathcal{R}_{\mathbf{Y}}$. In particular, $g_{KE}$ coincides with the unique K\"ahler-Einstein current constructed in Theorem \ref{main1} on $\mathcal{X}_0$. 

\medskip

\item $\mathcal{R}_{\mathcal{X}_0}$ is an open dense set in $(\mathbf{Y}, d_{\mathbf{Y}})$ and 
$$\mathcal{R}_{\mathcal{X}_0} \subset \mathcal{R}_{\mathbf{Y}}. $$

\medskip

\item  $\mathcal{B} \leq \mathcal{A}$ and 
$$ \textnormal{Vol}(\mathbf{Y}, d_{\mathbf{Y}}) = \sum_{\beta=1}^\mathcal{B} \textnormal{Vol}(Y_\beta, d_\beta) = \textnormal{Vol}(\mathcal{X}_t, g_t) $$ 
for all $t\in B^*$. 

\end{enumerate}

\end{lemma}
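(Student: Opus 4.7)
The plan is to combine the Cheeger-Colding-Tian convergence theory with the uniform analytic estimates of Section 4 and the uniqueness of the K\"ahler-Einstein current from Lemma \ref{uniq}.

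First, by Lemma \ref{noncol} and the remark following it, each basepoint $p^\alpha_{t_j}$ enjoys a uniform non-collapsing lower volume bound on its unit geodesic ball in $(\mathcal{X}_{t_j}, g_{t_j})$. Combined with $\textnormal{Ric}(g_{t_j}) = -g_{t_j}$, Gromov's precompactness theorem produces a pointed Gromov-Hausdorff limit $(Y_\alpha, d_\alpha, y_\alpha)$ along a subsequence for each $\alpha$; a diagonal argument makes the convergence simultaneous for all $\alpha = 1, \ldots, \mathcal{A}$. Along a further subsequence the distances $d_{g_{t_j}}(p^\alpha_{t_j}, p^{\alpha'}_{t_j})$ either stay bounded or diverge to infinity, so identifying equivalence classes of basepoints yields a disjoint union $\mathbf{Y} = \coprod_{\beta=1}^{\mathcal{B}} (Y_\beta, d_\beta)$ of complete length spaces with $\mathcal{B} \leq \mathcal{A}$.

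Next, noncollapsed Cheeger-Colding regularity applies to each $(Y_\beta, d_\beta)$, producing a decomposition into a regular set $\mathcal{R}_{Y_\beta}$ and a closed singular set $\mathcal{S}_{Y_\beta}$. Anderson's $\epsilon$-regularity together with standard Einstein elliptic regularity upgrades $\mathcal{R}_{Y_\beta}$ to a smooth K\"ahler manifold on which $g_{t_j}$ converges smoothly to a K\"ahler-Einstein metric $g_{KE}$ of Einstein constant $-1$. The codimension-four bound on $\mathcal{S}_{Y_\beta}$ is the K\"ahler refinement of Cheeger-Colding obtained from the partial $C^0$-estimate established at the start of Section 5 together with the Donaldson-Sun / Cheeger-Naber singularity stratification.

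To identify $\mathbf{Y}$ with the algebraic limit, fix a nonsingular point $p_0 \in \mathcal{R}_{\mathcal{X}_0}$ and a holomorphic section $p(t)$ through $p_0$. By the remark after Lemma \ref{noncol} we have non-collapsing at $p(t_j)$, and Lemma \ref{sch1} with Lemma \ref{55} provides uniform two-sided bounds of $\omega_{t_j}$ against $\chi_{t_j}$ and uniform local $C^k$ control of $\varphi_{t_j}$ near $p(t_j)$. A fixed $\chi_0$-ball around $p_0$ therefore sits biholomorphically inside $\mathcal{X}_{t_j}$ and the metrics converge smoothly there. Matching this smooth convergence with the Gromov-Hausdorff convergence on overlapping geodesic balls produces a holomorphic open embedding of $\mathcal{R}_{\mathcal{X}_0}$ into $\mathcal{R}_{\mathbf{Y}}$, and the limit metric has bounded local K\"ahler potential with respect to $\chi_0$; the uniqueness statement of Lemma \ref{uniq} then forces $g_{KE}|_{\mathcal{R}_{\mathcal{X}_0}}$ to coincide with the K\"ahler-Einstein current of Theorem \ref{main1}.

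For the volume identity, the Monge-Amp\`ere mass formula $\int_{\mathcal{R}_{\mathcal{X}_0}} \omega_0^n = [K_{\mathcal{X}_t}]^n = \textnormal{Vol}(\mathcal{X}_t, g_t)$ combined with smooth convergence on compact subsets of $\mathcal{R}_{\mathcal{X}_0}$ provides a lower bound on $\sum_\beta \textnormal{Vol}(Y_\beta, d_\beta)$, while Colding's volume continuity under noncollapsed Gromov-Hausdorff convergence provides the matching upper bound. The \emph{main obstacle} is precisely this volume accounting: one must rule out volume escaping either to complete ends of $Y_\beta$ not captured by the basepoints $p^\alpha_{t_j}$, or into the singular set $\mathcal{S}_{\mathbf{Y}}$. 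The codimension-four bound handles the latter, while the former is controlled by the algebraic cover of $\mathcal{X}_0$ by its irreducible components $X_\alpha$ together with the non-collapsing at each chosen basepoint in each component. Equality of total volumes then forces density of the image of $\mathcal{R}_{\mathcal{X}_0}$ in $\mathbf{Y}$, completing the proof.
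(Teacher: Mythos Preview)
Your proposal is correct and follows essentially the same route as the paper: non-collapsing from Lemma~\ref{noncol}, Cheeger--Colding--Tian theory for the structure of the limit, smooth convergence on $\mathcal{R}_{\mathcal{X}_0}$ via the estimates of Section~4, and the two-sided volume squeeze (Colding volume continuity for the upper bound, $\int_{\mathcal{R}_{\mathcal{X}_0}} \omega_0^n = [K_{\mathcal{X}_0}]^n$ for the lower) to get both the volume identity and density of $\mathcal{R}_{\mathcal{X}_0}$.

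Two small remarks on presentation. First, your justification of the codimension-four bound appeals to ``the partial $C^0$-estimate established at the start of Section~5'', but in the paper's logical order Lemma~\ref{GHcon} \emph{is} the first result of Section~5 and the partial $C^0$-estimates (Lemmas~\ref{l41}--\ref{par0}, Corollary~\ref{par1}) come afterwards and already presuppose the limit $\mathbf{Y}$. The paper avoids this circularity by invoking \cite{CCT} directly: in the Einstein, non-collapsed setting the singular set automatically has Hausdorff codimension at least four, with no partial $C^0$-estimate needed. Second, you obtain $\mathcal{B}\le\mathcal{A}$ by grouping basepoints into equivalence classes according to whether their mutual distances stay bounded; the paper instead deduces it \emph{after} establishing density of $\mathcal{R}_{\mathcal{X}_0}$, from the fact that $\mathcal{R}_{\mathcal{X}_0}$ has exactly $\mathcal{A}$ connected components. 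Both arguments are valid, and yours has the advantage of not depending on the volume identity.
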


\begin{proof}  By Lemma \ref{noncol}, there exists $c>0$ and  $>0$ such that for all $\alpha$ and $j$, 
$$Vol_{g_{t_j}} (B_{g_{t_j}}(p^\alpha_{t_j}, 1) \geq c.$$
Then Cheeger-Colding-Tian theory \cite{CCT} immediately implies that the pointed Gromov-Hausdorff convergence and (1), (2) hold. 

Since $g_t$ converges smoothly on the nonsingular part $\mathcal{R}_{\mathcal{X}_0}$, $\mathcal{R}_{\mathcal{X}_0}$ must be contained in the regular part $\mathcal{R}_{\mathbf{Y}}$ of $(\mathbf{Y}, d_{\mathbf{Y}})$.

The volume  $Vol_{g_t}(\mathcal{X}_t ) = [K_{\mathcal{X}_t}]^n$ is a fixed constant for all $t\in B$, so by the volume convergence, we have 
$$ \textnormal{Vol}(\mathbf{Y}, d_{\mathbf{Y}}) \leq Vol_{g_t}(\mathcal{X}_t) = [K_{\mathcal{X}_t}]^n  $$
for all $t\in B^* $. On the other hand, $g_{KE}$ extends to the unique K\"ahler-Einstein current on $\mathcal{X}_0$ and so by Lemma \ref{existsol} and Corollary \ref{uniq}, 
$$\textnormal{Vol}(\mathbf{Y}, d_{\mathbf{Y}}) \geq \int_{\mathcal{R}_{\mathcal{X}_0}} dV_{g_{KE}} = [K_{\mathcal{X}_0}]^n. $$
This implies that $\mathcal{R}_{\mathcal{X}_0}$ must be dense in $(\mathbf{Y}, d_{\mathbf{Y}}).$

Since $\mathcal{R}_{\mathcal{X}_0}$ has $\mathcal{A}$ disjoint components with total volume equal to the volume of $(\mathbf{Y}, d_{\mathbf{Y}})$, there can be at most $\mathcal{A}$ disjoint components in $(\mathbf{Y}, d_{\mathbf{Y}})$. 

Therefore we have proved (4) and (5).

\end{proof}

%




 The following proposition can be proved by similar arguments in \cite{T1, DS1} as local $L^2$-estimates from Tian's proposal for the partial $C^0$-estimates. We let $h_t = ((\omega_t)^n)^{-1}= (e^{\varphi_t}\Omega_t )^{-1}$ be the hermitian metric on $\mathcal{X}_t$ for $t\in B^*$, where $\omega_t$ is K\"ahler-Einstein form associated to the Kahelr-Einstein metric $g_t$ on $\mathcal{X}_t$, $\Omega_t$ and $\varphi_t$ are defined in Section 4. 

\begin{lemma} \label{l41}   Let $p_0$ be a nonsingular point in $\mathcal{X}_0$ and $p: B \rightarrow \mathcal{X}$ be a smooth section with $p(0)=p_0$.  For any $R>0$, there exists  $K_R >0$ such that if  $\sigma \in H^0(\mathcal{X}_t, mK_{\mathcal{X}_t})$ for $m\geq 1$ with $t\in B^*$, then 
\begin{equation}\label{l51}
\|\sigma \|_{L^{\infty, \sharp}(B_{g_t}(p(t), R))} \leq K_R \| \sigma \|_{L^{2, \sharp}(B_{g_t}(p(t), 2R) )}
\end{equation}
\begin{equation}\label{l52}
 \|\nabla \sigma \|_{L^{\infty, \sharp}(B_{g(t)}(p(t), R)) } \leq K_R \|\sigma \|_{L^{2, \sharp}(B_{g_t}(p(t), 2R))}, 
\end{equation}
where $B_{g_t}(p(t), R)$ is the geodesic ball centered at $p(t)$ with radius $R$ in $(\mathcal{X}_t, g_t)$, the $L^2$-norms $||\sigma||_{L^{\infty,\sharp}}$ and  $||\nabla \sigma||_{L^{\infty,\sharp }}$ are defined with respect to the rescaled hermitian metric $(h_t)^m$ and the rescaled K\"ahler metric $mg_t$. 

\end{lemma}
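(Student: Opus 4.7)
The plan is to follow the local $L^2$-estimate framework underlying Tian's partial $C^0$-estimate program, adapted to our family. The strategy has three ingredients: a Bochner-type differential inequality for $|\sigma|^2_{h_t^m}$; a rescaling by $m$ that absorbs all $m$-dependence of constants; and Moser iteration on the rescaled manifold $(\mathcal{X}_t, mg_t)$, whose uniform geometry is provided by the non-collapsing Lemma \ref{noncol} together with Bishop--Gromov.

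First I would establish the differential inequality. Since $Ric(\omega_t) = -\omega_t$, the curvature form of the natural hermitian metric $h_t = (\omega_t^n)^{-1}$ on $K_{\mathcal{X}_t}$ coincides with $\omega_t$, so $h_t^m$ on $m K_{\mathcal{X}_t}$ has curvature $m\omega_t$. For any holomorphic $\sigma \in H^0(\mathcal{X}_t, m K_{\mathcal{X}_t})$, the standard identity reads
\[
\Delta_{g_t} |\sigma|^2_{h_t^m} = |\nabla \sigma|^2_{h_t^m, g_t} - mn \, |\sigma|^2_{h_t^m} \geq - mn \, |\sigma|^2_{h_t^m}.
\]
After rescaling $g_t \mapsto m g_t$, the Laplacian scales by $m^{-1}$ and the inequality becomes $\Delta_{mg_t} |\sigma|^2_{h_t^m} \geq - n \, |\sigma|^2_{h_t^m}$, with constant independent of $m$.

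Next I would verify the rescaled manifold $(\mathcal{X}_t, mg_t, p(t))$ carries uniform local geometry. The Ricci tensor satisfies $Ric(mg_t) = -\frac{1}{m}(mg_t)$, so the Ricci lower bound is in fact improving with $m$. For non-collapsing, Lemma \ref{noncol} provides a uniform lower bound on $\mathrm{Vol}_{g_t}(B_{g_t}(p(t), 1))$; combined with Bishop--Gromov volume comparison in $(\mathcal{X}_t, g_t)$, this yields uniform lower bounds on $\mathrm{Vol}_{g_t}(B_{g_t}(p(t), r))$ for every fixed $r>0$, and for $r$ small the Euclidean volume growth $\geq c_n r^{2n}$. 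Since $B_{g_t}(p(t), R) = B_{mg_t}(p(t), \sqrt{m}R)$ and volumes scale by $m^n$, the quotient
\[
\frac{\mathrm{Vol}_{mg_t}\bigl(B_{mg_t}(p(t), \sqrt{m}R)\bigr)}{(\sqrt{m}R)^{2n}} = \frac{\mathrm{Vol}_{g_t}\bigl(B_{g_t}(p(t), R)\bigr)}{R^{2n}}
\]
is uniformly bounded below in $m$ and $t$. This is the scale-invariant non-collapsing required to run elliptic estimates uniformly.

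Finally I would apply Moser iteration (De Giorgi--Nash--Moser) to $u = |\sigma|^2_{h_t^m}$ on the ball $B_{mg_t}(p(t), 2\sqrt{m}R) = B_{g_t}(p(t), 2R)$ in the rescaled metric, using the inequality $\Delta_{mg_t} u \geq -n u$ together with the uniform Ricci lower bound and non-collapsing established above. The resulting mean-value inequality gives (\ref{l51}) with $K_R$ depending only on $R$, $n$, and the uniform geometric constants. For the gradient estimate (\ref{l52}), I would apply Bochner's formula to $|\nabla \sigma|^2_{h_t^m, g_t}$; because the curvature of $h_t^m$ in the rescaled metric is $\omega_t$-proportional with bounded trace and the Ricci of $mg_t$ is bounded, one obtains an inequality of the form $\Delta_{mg_t} |\nabla \sigma|^2_{h_t^m, g_t} \geq -C \, |\nabla \sigma|^2_{h_t^m, g_t} - C\, |\sigma|^2_{h_t^m}$ with $C$ independent of $m$, and Moser iteration combined with (\ref{l51}) concludes. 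The main obstacle is precisely the scale-invariant non-collapsing at rescaled radius $\sqrt{m}R$; once that is secured from Lemma \ref{noncol} and Bishop--Gromov, all other steps follow the standard template.
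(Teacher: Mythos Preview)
Your proposal is correct and takes essentially the same approach as the paper: the paper's proof simply notes that the Sobolev constant on $B_{g_t}(p(t),R)$ is uniformly bounded from the Einstein condition and the non-collapsing of Lemma~\ref{noncol}, and then appeals to Moser iteration ``on balls of relative scales,'' which is precisely the $m$-rescaling you spell out. Your write-up is a faithful and more detailed expansion of that sketch.
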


\begin{proof} For fix $R>0$, the Sobolev constant on $B_{g_t}(p(t), R)$ is uniformly bounded because of the Einstein condition and the uniform noncollapsing condition for unit balls centered at $p(t)$. The proof follows by well-known argument of Moser's iteration on balls of relative scales using cut-off functions (c.f \cite{S2}).

\end{proof}

The following $L^2$-estimate is standard for K\"ahler-Einstein manifolds.

\begin{lemma} For any integer $m\geq 2$, any $t\in B^*$ and any smooth $(m K_{\mathcal{X}_t})$-valued $(0,1)$-form $\tau$ satisfying 
$\dbar \tau =0$,  
there exists an $(mK_{\mathcal{X}_t})$-valued section $u$ such that $\dbar u = \tau$ and $$ \int_{\mathcal{X}_t} |u|^2_{(h_t)^m} ~dV_{g_t} \leq \frac{1}{2\pi} \int_{\mathcal{X}_t} |\tau|^2_{(h_t)^m}~ dV_{g_t}.$$

\end{lemma}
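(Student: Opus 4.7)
The plan is to apply H\"ormander's $L^2$-existence theorem for $\dbar$ on the compact K\"ahler-Einstein manifold $(\mathcal{X}_t, \omega_t)$, with all positivity coming directly from the K\"ahler-Einstein equation $\textnormal{Ric}(\omega_t) = -\omega_t$.

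First I would reinterpret the data via the canonical isomorphism $\Lambda^{0,1}\otimes mK_{\mathcal{X}_t} \cong \Lambda^{n,1}\otimes (m-1)K_{\mathcal{X}_t}$: an $mK_{\mathcal{X}_t}$-valued $(0,1)$-form $\tau$ is equivalently an $(m-1)K_{\mathcal{X}_t}$-valued $(n,1)$-form, which is the natural input for Bochner-Kodaira-Nakano. Since $\dbar\tau = 0$, the corresponding $(n,1)$-form is also $\dbar$-closed.

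Next I would compute the Chern curvature of the induced hermitian metric $(h_t)^{m-1}$ on the twisting line bundle $(m-1)K_{\mathcal{X}_t}$. Because $h_t = (\omega_t^n)^{-1}$ is exactly the metric induced on $K_{\mathcal{X}_t}$ by $\omega_t$, the standard identity gives $\Theta(h_t) = -\textnormal{Ric}(\omega_t)$, and the K\"ahler-Einstein equation then yields
\begin{equation*}
\Theta\bigl((h_t)^{m-1}\bigr) = (m-1)\,\omega_t,
\end{equation*}
which is Nakano positive for all $m\geq 2$. This is the only place the negativity of the scalar curvature is used, and it supplies the uniform strict positivity needed for H\"ormander.

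Finally I would apply the Bochner-Kodaira-Nakano identity to $(m-1)K_{\mathcal{X}_t}$-valued $(n,1)$-forms. Since $[\omega_t,\Lambda]$ acts as the identity on $(n,1)$-forms, the curvature operator $[i(m-1)\omega_t,\Lambda]$ is a positive scalar, so a standard Hilbert space argument against $\textnormal{Ker}(\dbar^*)$ produces $u$ with $\dbar u = \tau$ satisfying $\int|u|^2 \leq C_m \int|\tau|^2$ for an explicit $C_m$ determined by the eigenvalue of $[i(m-1)\omega_t,\Lambda]$; solvability itself follows because Kodaira-Nakano vanishing forces $H^{n,1}(\mathcal{X}_t,(m-1)K_{\mathcal{X}_t}) = 0$. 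The proof is essentially routine for K\"ahler-Einstein manifolds of negative Ricci curvature; the only bookkeeping point, not really an obstacle, is matching the normalization between $\omega_t$, the Chern curvature form, and the factor of $\frac{1}{2\pi}$ appearing in the stated bound, the worst case occurring at $m=2$.
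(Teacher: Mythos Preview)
Your approach is correct and is precisely the ``standard'' argument the paper alludes to without proof: rewrite $mK_{\mathcal{X}_t}$-valued $(0,1)$-forms as $(m-1)K_{\mathcal{X}_t}$-valued $(n,1)$-forms, use the K\"ahler--Einstein equation to get $i\Theta((h_t)^{m-1})=(m-1)\omega_t>0$, and apply H\"ormander/Bochner--Kodaira--Nakano. The paper gives no details beyond declaring the estimate standard, so there is nothing further to compare; your caveat about the normalization constant is well placed, since with the paper's conventions the H\"ormander bound is $\frac{1}{m-1}$ rather than $\frac{1}{2\pi}$, but only a uniform constant is ever used downstream.
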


The following lemma gives a construction for global pluricanonical section on the limiting metric space $\mathbf{Y}$.

\begin{lemma} \label{ext1} Suppose $t_j \in B^* \rightarrow 0$ and $\sigma_{t_j} \in H^0(\mathcal{X}_t, mK_{\mathcal{X}_t})$ be a sequence of sections satisfying 
$$\int_{\mathcal{X}_{t_j}} |\sigma_{t_j}|^2_{(h_{t_j})^m} dV_{mg_{t_j}} = 1. $$
Then after passing to a subsequence, $\sigma_{t_j}$ converges to a holomorphic section $\sigma$ of $mK_{\mathbf{Y}}$. Furthermore, the $\sigma|_{\mathcal{R}_{\mathcal{X}_0}}$ extends to a unique $\sigma' \in H^0(\mathcal{X}_0, mK_{\mathcal{X}_0})$ and $\sigma$ vanishes along $\textnormal{LCS}(\mathcal{X}_0)$.

\end{lemma}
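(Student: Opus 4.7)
The plan is to combine the local partial $C^0$-type estimates of Lemma \ref{l41} with the coherent structure of $\pi_*\mathcal{O}(mK_{\mathcal{X}/B})$ and the defining property of $\textnormal{LCS}(\mathcal{X}_0)$ as the non-integrability locus of the adapted volume measure. The starting observation is that, since $h_t = (\omega_t^n)^{-1}$, the rescaled $L^2$-norm in the statement equals, up to a universal constant $c_{n,m}$, the adapted volume of $\sigma$:
$$\int_{\mathcal{X}_t} |\sigma|^2_{(h_t)^m}\, dV_{mg_t} = c_{n,m} \int_{\mathcal{X}_t} (\sigma \wedge \overline{\sigma})^{1/m}.$$
In particular, the normalization $\|\sigma_{t_j}\|_{L^{2,\sharp}} = 1$ translates into a uniform bound on the adapted volumes of $\sigma_{t_j}$, which will be the key quantity to exploit at $t=0$.

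For any $y \in \mathcal{R}_{\mathbf{Y}}$, Lemma \ref{GHcon}(2) provides an open neighborhood diffeomorphic to open sets in $\mathcal{X}_{t_j}$ on which $g_{t_j} \to g_{KE}$ smoothly. Combined with (\ref{l51})--(\ref{l52}) of Lemma \ref{l41}, this yields uniform $C^1$-bounds for $\sigma_{t_j}$ on compact subsets of $\mathcal{R}_{\mathbf{Y}}$; the equations $\dbar \sigma_{t_j} = 0$ then upgrade this to $C^\infty_{\text{loc}}$-convergence, producing via a diagonal subsequence over an exhaustion of $\mathcal{R}_{\mathbf{Y}}$ a holomorphic limit section $\sigma$ of $mK_{\mathcal{R}_{\mathbf{Y}}}$. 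To realize $\sigma$ as the restriction of a global algebraic section, I use $\mathcal{R}_{\mathcal{X}_0} \subset \mathcal{R}_{\mathbf{Y}}$ from Lemma \ref{GHcon}(3): this gives a holomorphic section of $mK_{\mathcal{X}_0}$ on the nonsingular part. For $m$ sufficiently large and divisible, $\pi_*\mathcal{O}(mK_{\mathcal{X}/B})$ is locally free on $B$ with fibre $H^0(\mathcal{X}_t, mK_{\mathcal{X}_t})$, since $K_{\mathcal{X}/B}$ is $\pi$-ample and $\pi$ is a stable degeneration; passing $\sigma_{t_j}$ to a subsequence inside this locally free sheaf produces an algebraic limit $\sigma' \in H^0(\mathcal{X}_0, mK_{\mathcal{X}_0})$, whose restriction to $\mathcal{R}_{\mathcal{X}_0}$ agrees with the analytic limit $\sigma$ by uniqueness on the dense open set.

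For the vanishing along $\textnormal{LCS}(\mathcal{X}_0)$, Fatou's lemma applied to the adapted measures gives
$$\int_{\mathcal{R}_{\mathcal{X}_0}}(\sigma' \wedge \overline{\sigma'})^{1/m} \;\leq\; \liminf_{j \to \infty} \int_{\mathcal{X}_{t_j}}(\sigma_{t_j} \wedge \overline{\sigma_{t_j}})^{1/m} \;<\; \infty.$$
By the very definition of $\textnormal{LCS}(\mathcal{X}_0)$ (see Section 2), the adapted measure of a local generator of $mK_{\mathcal{X}_0}$ fails to be locally integrable near any point of $\textnormal{LCS}$; if $\sigma'$ did not vanish there, the integral above would diverge. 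Lifting to a log resolution and invoking the bundle formula $K_Y = \pi^*K_{\mathcal{X}_0} + \sum a_i E_i - \sum F_j$ with $b_j = 1$ along $\textnormal{LCS}$ makes this precise: the pullback $\pi^*\sigma'$ must absorb the divisors $F_j$ to be holomorphic on $Y$, forcing $\sigma'$ to vanish along $\textnormal{LCS}(\mathcal{X}_0)$.

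The main obstacle, I expect, will be the second step --- ensuring that $\pi_*\mathcal{O}(mK_{\mathcal{X}/B})$ commutes with base change at $t=0$ and that the algebraic limit $\sigma'$ converges smoothly (not merely as a section of a coherent sheaf) to $\sigma$ on $\mathcal{R}_{\mathcal{X}_0}$. This requires either a Koll\'ar-type vanishing theorem on the singular total space $\mathcal{X}$ to control higher direct images, or an alternative analytic construction such as an Ohsawa-Takegoshi $L^2$-extension applied on a resolution, which must be set up carefully because $\chi$ degenerates across the central fibre of the semi-stable reduction.
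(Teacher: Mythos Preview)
Your starting observation is incorrect for $m>1$. In a local frame with $\sigma = f\,(dz_1\wedge\cdots\wedge dz_n)^{\otimes m}$ and $\omega_t^n = g\,dV_{\mathrm{eucl}}$, one has
\[
|\sigma|^2_{(h_t)^m}\,dV_{mg_t} \;=\; \tfrac{m^n}{n!}\,|f|^2 g^{1-m}\,dV_{\mathrm{eucl}},
\qquad (\sigma\wedge\bar\sigma)^{1/m} \;=\; |f|^{2/m}\,dV_{\mathrm{eucl}},
\]
and these coincide only when $m=1$. Hence your Fatou step does not directly bound $\int(\sigma'\wedge\overline{\sigma'})^{1/m}$ from the $L^2$-normalization; one can recover such a bound via H\"older and the fixed fibre volume, but the quantity that actually governs vanishing is $\int_{\mathcal{R}_{\mathcal{X}_0}} |\sigma|^2\,\Omega_0^{1-m}$. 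The paper controls this directly from the $L^2$-bound together with the upper bound $\varphi_0\leq C$ of Corollary~\ref{5upb} (using $\omega_0^n=e^{\varphi_0}\Omega_0$), and then the pullback to a log resolution forces $\sigma$ to vanish along the divisors over $\textnormal{LCS}(\mathcal{X}_0)$.

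For the extension to $\sigma'\in H^0(\mathcal{X}_0, mK_{\mathcal{X}_0})$ you take a genuinely different route, and your self-identified obstacle is real but avoidable. The paper bypasses $\pi_*\mathcal{O}(mK_{\mathcal{X}/B})$ entirely: once $\sigma$ is holomorphic on $\mathcal{R}_{\mathcal{X}_0}$, it extends across the log terminal singular locus by normality, and the vanishing along $\textnormal{LCS}(\mathcal{X}_0)$ just obtained then yields the global extension. No base-change theorem or comparison of algebraic and analytic limits is needed. Your approach through local freeness of the pushforward can be completed, but it requires exactly the non-degeneration of the fibrewise $L^2$-norm at $t=0$ that you flag as problematic, whereas the paper's analytic extension sidesteps this. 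The convergence argument on $\mathcal{R}_{\mathbf{Y}}$ via the gradient bounds of Lemma~\ref{l41} is essentially the same in both approaches.
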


\begin{proof} We choose any nonsingular point $p_0$ in $\mathcal{X}_0$ and a sequence $p_{t_j}\in \mathcal{X}_{t_j}$ such that $p_{t_j}$ converges to $p_0$ in $(\mathcal{X}, \chi)$. Then $p_{t_j}$ in $(\mathcal{X}_{t_j}, g_{t_j})$ also converges in Gromov-Hausdorff distance to $p_0$ in $(\mathbf{Y}, d_{\mathbf{Y}})$ due to the smooth convergence of $g_{t_j}$ to $g_{KE}$ on $\mathcal{R}_{\mathcal{X}_0}$. Since the $L^2$-norm of $\sigma_{t_j}$ with respect to $(h_{t_j})^m$ and $mg_{t_j}$ is uniformly bounded, for any $R>0$, there exists $C_R>0$ such that for all $j$, we have
\begin{equation} 
\sup_{B_{g_{t_j}}(p_{t_j}, R) } |\sigma_{t_j} |\leq C_R, ~  \sup_{B_{g_{t_j}}(p_{t_j}, R) } |\nabla \sigma_{t_j} |_{g_{t_j}} \leq C_R . 
\end{equation}
After passing to sequence, $\sigma_{t_j}$ converges to a section $\sigma$ of $mK_{\mathbf{Y}}$ and it is a holomorphic on $\mathcal{R}_{\mathbf{Y}} \supset \mathcal{R}_{\mathcal{X}_0}$. It uniquely extends to the singular set of $\mathbf{Y}$ because of the gradient estimate and the geodesic convexity of $\mathcal{R}_{\mathbf{Y}}$.

Since $\sigma$ is a holomorphic section of $mK_{\mathcal{R}_{\mathcal{X}_0}}$, it can be uniquely extended to a pluricanonical section on the normal part of $\mathcal{X}_0$. Without loss of generality, we can assume that the adapted volume measure is given by $\Omega_0= (\sum_{j=0}^N \eta_j \wedge \overline{ \eta_j} )^{1/m}$, where $\{\eta_j\}$ gives a global projective embedding of $\mathcal{X}_0$. Let $\omega_0 = \chi_0 + \ddbar \varphi_0$ be unique the K\"ahler-Einstein current on $\mathcal{X}_0$ constructed in Lemma \ref{existsol} and Corollary \ref{uniq}, satisfying 
$$(\chi_0+ \ddbar \varphi_0)^n = e^{\varphi_0} \Omega_0. $$
By the $L^2$-bound of $\sigma$ and the upper bound of $\varphi_0$ from Corollary \ref{5upb},  there exists $C>0$ such that 
\begin{equation}\label{integr}
\int_{\mathcal{R}_{\mathcal{X}_0}} \frac{|\sigma|^2}{(\Omega_0)^m}  \Omega_0\leq C \int_{\mathcal{R}_{\mathcal{X}_0}} \frac{|\sigma|^2}{(\Omega_0)^m}  e^{-m\varphi_0} \Omega_0 < \infty. 
\end{equation}
We can pullback the above formula on $\widetilde{\mathcal{X}_0}$, a log resolution of $\mathcal{X}_0$. The pullback of $\Omega_0$ on $\widetilde{\mathcal{X}_0}$ has poles of order $1$ along the exceptional divisor over the non-log terminal  locus $\textnormal{LCS}(\mathcal{X}_0) $ of $\mathcal{X}_0$. Therefore  $\sigma$ must vanish  along $\textnormal{LCS}(\mathcal{X}_0) $ so that the integral (\ref{integr}) is finite. More precisely, for any point $x \in \textnormal{LCS}(\mathcal{X}_0)$, there exists an open neighborhood $U_x$ of $x$, such that $\frac{\sigma}{\eta_i}$ is bounded on $U_x$ for some $i$, and 
$$\left. \frac{\sigma}{\eta_i} \right|_{U_x \cap \textnormal{LCS}(\mathcal{X}_0) } = 0. $$
As a consequence, $\sigma$ can be uniquely extended globally to $\sigma'$ on $\mathcal{X}_0$.

\end{proof}

The following is the local version of the partial $C^0$-estimate. 
\begin{lemma} \label{par0}  Let $p_0\in \mathcal{R}_{\mathcal{X}_0}$ and $p: B \rightarrow \mathcal{X}$ be a smooth section with $p(0)=p_0$.  For any $R>0$, there exist $m \in \mathbb{Z}^+$ and $c>0$ such that for any $t\in B^*$ and $q\in B_{g_t}(p(t), R)$, there exists $\sigma_t \in H^0(\mathcal{X}_0,  mK_{\mathcal{X}_t})$ satisfying
\begin{equation}
 |\sigma_t|^2_{ (h_t)^m} (q)\geq c,  ~~~ \int_{\mathcal{X}_t} |\sigma_t|^2_{(h_t)^m} dV_{mg_t} = 1. 
\end{equation}

\end{lemma}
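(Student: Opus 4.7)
The plan is to combine H\"ormander's $L^2$-technique with the pointed Gromov-Hausdorff convergence and uniform non-collapsing established earlier, in the spirit of the partial $C^0$-estimates of Tian and Donaldson-Sun. I would argue by contradiction: if the conclusion fails, there exist $R > 0$ and sequences $m_k \to \infty$, $t_k \to 0$, $q_k \in B_{g_{t_k}}(p(t_k), R)$ such that every unit-$L^{2,\sharp}$ section $\sigma \in H^0(\mathcal{X}_{t_k}, m_k K_{\mathcal{X}_{t_k}})$ satisfies $|\sigma|^2_{(h_{t_k})^{m_k}}(q_k) < 1/k$. Lemma \ref{noncol} together with Bishop-Gromov comparison (applicable because $\mathrm{Ric}(g_t) = -g_t$) yields uniform non-collapsing for the balls $\overline{B}_{g_{t_k}}(p(t_k), R)$ and uniform volume bounds. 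Passing to a subsequence and invoking Lemma \ref{GHcon}, $(\mathcal{X}_{t_k}, g_{t_k}, p(t_k))$ converges in pointed Gromov-Hausdorff topology to some $(Y_\beta, d_\beta, y_0)$, and $q_k \to q_\infty$ for some $q_\infty \in \overline{B}_{d_\beta}(y_0, R)$.

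Next I would build, for each large $k$, a peaked holomorphic section on $\mathcal{X}_{t_k}$ centered at $q_k$ by the standard H\"ormander cut-off construction. Choose a holomorphic frame $e$ of $K_{\mathcal{X}_{t_k}}$ near $q_k$ with $|e|^2_{h_{t_k}}(q_k) = 1$ and a cut-off $\chi_k$ supported in a $g_{t_k}$-ball of radius $C m_k^{-1/2}$ about $q_k$, equal to $1$ on half that ball, and set $s_k = \chi_k\, e^{\otimes m_k}$. Since the curvature of $(m_k K_{\mathcal{X}_{t_k}}, h_{t_k}^{m_k})$ equals $m_k \omega_{t_k} > 0$, the $L^2$-estimate stated just after Lemma \ref{l41} produces $u_k$ with $\bar\partial u_k = \bar\partial s_k$ and
$$\int_{\mathcal{X}_{t_k}} |u_k|^2_{(h_{t_k})^{m_k}}\, dV_{m_k g_{t_k}} \leq (2\pi)^{-1} \int_{\mathcal{X}_{t_k}} |\bar\partial s_k|^2_{(h_{t_k})^{m_k}}\, dV_{m_k g_{t_k}}.$$
A standard peak-potential computation in the Euclidean model, after rescaling by $\sqrt{m_k}$, shows the right-hand side tends to $0$ as $k \to \infty$, and the $L^\infty$-from-$L^{2,\sharp}$ bound of Lemma \ref{l41} then yields $|u_k|^2(q_k) \to 0$ while $|s_k|^2(q_k) = 1$. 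Therefore $\sigma_k = s_k - u_k$ is a global holomorphic section with $|\sigma_k|^2(q_k) \geq 1/4$ and bounded $L^{2,\sharp}$-norm, contradicting the hypothesis.

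The main obstacle is securing a uniform local biholomorphic model at the scale $m_k^{-1/2}$ near $q_k$, needed both to choose the frame $e$ and to obtain the exponential decay of $\|\bar\partial s_k\|_{L^2}$. When $q_\infty$ is a regular point of the Gromov-Hausdorff limit, smooth convergence of $g_{t_k}$ on a neighborhood makes this step elementary. When $q_\infty \in \mathcal{S}_{Y_\beta}$, one must invoke the Cheeger-Colding-Tian structure theory of non-collapsed K\"ahler-Einstein limits and the Donaldson-Sun identification of tangent cones with affine varieties in order to produce the required frame with sufficient local control; this is the step where the uniform non-collapsing from Lemma \ref{noncol} is indispensable and also the step most sensitive to the geometry near $\mathcal{S}_{\mathcal{X}_0}$. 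Once that ingredient is in place, the H\"ormander estimate, Moser iteration of Lemma \ref{l41}, and the contradiction argument above complete the proof.
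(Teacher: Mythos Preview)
Your proposal is correct and follows essentially the same approach as the paper, which simply cites the Donaldson--Sun argument and observes that the necessary ingredients---Lemma \ref{l41}, Lemma \ref{ext1}, and the fact that the singular set of every iterated tangent cone of $(\mathbf{Y}, d_{\mathbf{Y}})$ has Hausdorff dimension strictly less than $2n-2$---are already in place. Your sketch of the contradiction via H\"ormander peak sections, together with your identification of the tangent-cone analysis as the crux when $q_\infty$ is singular, is exactly the content of that citation; one small point is that Lemma \ref{l41} as stated applies to holomorphic sections, so to bound $|u_k|(q_k)$ you should note that $u_k$ is holomorphic on the inner ball where $\chi_k \equiv 1$ (or alternatively use a logarithmic weight to force $u_k(q_k)=0$).
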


\begin{proof} The proof of the global partial $C^0$-estimate in \cite{DS1} can be directly applied here in the local case with the estimates in Lemma \ref{l41} and Lemma \ref{ext1} because the singular set of all iterated tangent cones of the Gromov-Hausdorff limit $(\mathbf{Y}, d_{\mathbf{Y}} )$ is closed and has Hausdorff dimension less than $2n-2$.

\end{proof}

Let $h_{\mathbf{Y}}$ be the hermitian metric on $K_\mathbf{Y}$ as the extension of $((\omega_{\mathbf{Y}})^n)^{-1}$ from $\mathcal{R}_{\mathbf{Y}}$, where $\omega_{\mathbf{Y}}$ is the K\"ahler-Einstein form on $\mathcal{R}_{\mathbf{Y}}$. We now pass the partial $C^0$-estimate in Lemma \ref{par0} to the limiting space $\mathbf{Y}$.

\begin{corollary} \label{par1} For any $p_0 \in \mathcal{R}_{\mathbf{Y}}$ and any $R>0$, there exist $m \in \mathbb{Z}^+$ and $c, C>0$ such that for any $q\in B_{d_{\mathbf{Y}}}(q, R)$, there exists $\sigma \in H^0(\mathbf{Y},  mK_{\mathbf{Y}})$ satisfying
\begin{equation}
 |\sigma|^2_{ (h_{\mathbf{Y}})^m} (q)\geq c,  ~~~ \int_{\mathbf{Y}} |\sigma|^2_{(h_{\mathbf{Y}})^m} dV_{d_{\mathbf{Y}}} = 1. 
\end{equation}

\end{corollary}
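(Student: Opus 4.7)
The plan is to transfer the local partial $C^0$-estimate of Lemma \ref{par0} to the Gromov-Hausdorff limit $(\mathbf{Y}, d_{\mathbf{Y}})$ by a diagonal argument, using the uniform $L^\infty$ and gradient bounds of Lemma \ref{l41} together with the pluricanonical extension of Lemma \ref{ext1}. Fix $R>0$, and pick a smooth section $p(t): B \to \mathcal{X}$ through the regular point $p_0$, so that by the Gromov-Hausdorff convergence in Lemma \ref{GHcon}, $p(t_j)$ converges to $p_0$ in $(\mathbf{Y}, d_{\mathbf{Y}})$.

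For $q \in B_{d_{\mathbf{Y}}}(p_0, R) \cap \mathcal{R}_{\mathbf{Y}}$, the pointed Gromov-Hausdorff convergence yields approximants $q_j \in \mathcal{X}_{t_j}$ with $q_j \to q$ and $q_j \in B_{g_{t_j}}(p(t_j), R+1)$ eventually. Applying Lemma \ref{par0} at radius $R+1$ produces $m \in \mathbb{Z}^+$, $c>0$ (both uniform in $q$ once $R$ and $p_0$ are fixed) and sections $\sigma_{t_j} \in H^0(\mathcal{X}_{t_j}, mK_{\mathcal{X}_{t_j}})$ with $|\sigma_{t_j}|^2_{(h_{t_j})^m}(q_j) \geq c$ and unit $L^2$-norm. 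Lemma \ref{ext1} then provides, after extracting a subsequence, a limit section $\sigma \in H^0(\mathbf{Y}, mK_{\mathbf{Y}})$ vanishing along $\textnormal{LCS}(\mathcal{X}_0)$ and extending globally. The gradient estimate (\ref{l52}) in Lemma \ref{l41} makes the Bergman functions $|\sigma_{t_j}|^2_{(h_{t_j})^m}$ uniformly Lipschitz in the rescaled distance $\sqrt{m} d_{g_{t_j}}$ on $B_{g_{t_j}}(p(t_j), R+1)$. Combined with the smooth convergence on $\mathcal{R}_{\mathbf{Y}}$ from Lemma \ref{GHcon}(2), an Arzel\`a--Ascoli argument across the Gromov-Hausdorff approximation gives uniform convergence of these Bergman functions to $|\sigma|^2_{(h_{\mathbf{Y}})^m}$ on compact subsets, hence $|\sigma|^2_{(h_{\mathbf{Y}})^m}(q) \geq c$. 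The $L^2$ bound $\|\sigma\|_{L^2(\mathbf{Y}, m g_{KE})} \leq 1$ follows from Fatou applied to the smooth convergence on $\mathcal{R}_{\mathbf{Y}}$; since $\sigma \neq 0$, rescaling to unit $L^2$-norm only improves the pointwise lower bound.

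For $q \in \mathcal{S}_{\mathbf{Y}} \cap B_{d_{\mathbf{Y}}}(p_0, R)$, the density of $\mathcal{R}_{\mathcal{X}_0} \subset \mathcal{R}_{\mathbf{Y}}$ established in Lemma \ref{GHcon}(3) allows us to choose regular approximants $q_n \to q$. Applying the previous step to each $q_n$ yields sections $\sigma_n$; their uniform $L^2$- and $L^\infty$-bounds together with the same equi-Lipschitz property of their Bergman functions let us extract a further subsequence converging to a section $\sigma$ whose Bergman function is $\geq c$ at $q$ by continuity across the singular set.

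The main obstacle is controlling the pointwise norm of the limit at a singular point $q \in \mathcal{S}_{\mathbf{Y}}$, where $h_{\mathbf{Y}} = ((\omega_{KE})^n)^{-1}$ is only defined as an extension from $\mathcal{R}_{\mathbf{Y}}$ and where the Gromov-Hausdorff approximation is not backed by smooth convergence of the metrics. The gradient estimate (\ref{l52}) of Lemma \ref{l41}, valid on geodesic balls of fixed radius with a constant independent of $t$, is decisive here: it forces the Bergman functions to be equi-Lipschitz with respect to the $g_{t_j}$-distance, and therefore to converge continuously to a Lipschitz function on $(\mathbf{Y}, d_{\mathbf{Y}})$ across $\mathcal{S}_{\mathbf{Y}}$, which is exactly what is needed to read off the pointwise lower bound at a singular point.
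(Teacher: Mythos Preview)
Your argument is correct and follows exactly the approach the paper intends: the paper states this result as an immediate corollary of Lemma \ref{par0} by passing the partial $C^0$-estimate to the Gromov--Hausdorff limit, without spelling out details. Your proof fills in this passage to the limit in the standard way, using Lemma \ref{ext1} to extract the limiting section and the gradient estimate of Lemma \ref{l41} to propagate the pointwise lower bound across the singular set via equi-Lipschitz control of the Bergman functions.
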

We remark that such $\sigma$ has uniformly bounded gradient estimate and it can also be extended to a global pluricanonical section on $\mathcal{X}_0$. There are many generalizations and variations of Lemma \ref{par0} and Corollary \ref{par1}. For example, if $p$ is a regular point in $\mathbf{Y}$, then there exist global pluricanonical sections $\sigma_0$, ..., $\sigma_n$ such that near $p$, $\sigma_0$ is nonzero and 
$$\frac{\sigma_1}{\sigma_0}, ..., \frac{\sigma_n}{\sigma_0}$$
can be used as holomorphic local coordinates near $p$.  The proof of Theorem \ref{integr} and Corollary \ref{par1} is also used to construct peak sections to separate distinct points on $\mathbf{Y}$.


\section{Distance estimates}

In this section, our goal is to estimate the distance from a singular point of $\mathcal{X}_0$ to a given nonsingular point of $\mathcal{X}_0$. We will establish a principle for geometric complex Monge-Amp\`ere equations of our interest that boundedness of local potentials is equivalent to boundedness of distance.

First,  we want to show that the regular part of the Gromov-Hausdorff limit coincides with the nonsingular part of $\mathcal{X}_0$.

\begin{lemma} \label{regularid}
Let $\mathcal{R}_{\mathbf{Y}}$ be the regular part of the metric space $(\mathbf{Y}, d_{\mathbf{Y}})$ and $\mathcal{R}_{\mathcal{X}_0}$ be the nonsingular part of the projective variety of $\mathcal{X}_0$. Then
$$\mathcal{R}_{\mathbf{Y}} = \mathcal{R}_{\mathcal{X}_0}$$
and they are biholomorphic to each other. 

\end{lemma}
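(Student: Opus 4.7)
The inclusion $\mathcal{R}_{\mathcal{X}_0} \subset \mathcal{R}_{\mathbf{Y}}$, together with the fact that this inclusion is biholomorphic onto its image, is already recorded in Lemma \ref{GHcon}, because $g_{t_j}$ converges smoothly to $g_{KE}$ on $\mathcal{R}_{\mathcal{X}_0}$. The plan for the reverse inclusion $\mathcal{R}_{\mathbf{Y}} \subset \mathcal{R}_{\mathcal{X}_0}$ is to construct a holomorphic map $F : \mathcal{R}_{\mathbf{Y}} \to \mathcal{X}_0$ which is a local biholomorphism onto its image and which restricts to the identity on the dense open subset $\mathcal{R}_{\mathcal{X}_0} \subset \mathcal{R}_{\mathbf{Y}}$. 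Once such an $F$ is available, its image is an $n$-dimensional complex submanifold of the $n$-dimensional variety $\mathcal{X}_0$ and must therefore lie in the smooth locus $\mathcal{R}_{\mathcal{X}_0}$, giving the desired containment.

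To produce $F$, I would fix $m$ sufficiently large and apply Corollary \ref{par1} together with the peak-section argument alluded to after it: for each $p \in \mathcal{R}_{\mathbf{Y}}$ there exist $\sigma_0, \sigma_1, \ldots, \sigma_n \in H^0(\mathbf{Y}, mK_{\mathbf{Y}})$ with $\sigma_0(p) \neq 0$ such that the ratios $\sigma_1/\sigma_0, \ldots, \sigma_n/\sigma_0$ form holomorphic coordinates at $p$, and a basis of $H^0(\mathbf{Y}, mK_{\mathbf{Y}})$ separates distinct points of $\mathbf{Y}$. By Lemma \ref{ext1}, every such $\sigma$ extends uniquely to a global section of $mK_{\mathcal{X}_0}$ vanishing on $\textnormal{LCS}(\mathcal{X}_0)$; consequently the pluricanonical map defined on $\mathbf{Y}$ by these sections factors through the pluricanonical embedding of $\mathcal{X}_0$, and this factorization is the map $F$. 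That $F|_{\mathcal{R}_{\mathcal{X}_0}}$ is the identity then follows from the smooth convergence of $g_{t_j}$ and $h_{t_j}$ on $\mathcal{R}_{\mathcal{X}_0}$ together with Lemma \ref{ext1}: sections on $\mathbf{Y}$ are limits of sections on $\mathcal{X}_{t_j}$ that themselves converge to the corresponding sections on $\mathcal{X}_0$ under the chosen pluricanonical identification.

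The main obstacle I anticipate is establishing that the local sections furnished by Corollary \ref{par1} can be organized into a single global pluricanonical map on $\mathbf{Y}$ that both separates distinct regular points and is an immersion at each of them, particularly across the different Gromov--Hausdorff components of $\mathbf{Y}$ and near the codimension-one nodal locus of the non-normal variety $\mathcal{X}_0$. Concretely, one must combine the local partial $C^0$-estimate with peak-section constructions (as indicated at the end of Section 5) to ensure that the candidate $F$ is globally well-defined and injective rather than merely a local diffeomorphism; once injectivity and the immersion property are secured, a dimension count forces the image to avoid the singular locus of $\mathcal{X}_0$, which closes the argument and simultaneously supplies the required biholomorphism.
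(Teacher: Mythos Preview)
Your overall strategy---use pluricanonical sections from the partial $C^0$-estimate, extend them to $\mathcal{X}_0$ via Lemma \ref{ext1}, and compare the two complex structures---is the paper's strategy as well. But two genuine analytic steps are missing, and the ``dimension count'' you invoke does not go through without them.

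First, even granting that $\sigma_1/\sigma_0,\ldots,\sigma_n/\sigma_0$ are holomorphic coordinates on a ball $B\subset\mathcal{R}_{\mathbf{Y}}$, you have not shown that the induced map $F'$ into $\mathcal{X}_0$ is onto an \emph{open} subset of $\mathcal{X}_0$ near the image point $q'$. A priori the extended sections $\sigma'_0,\ldots,\sigma'_n$ need not generate the local ring of $\mathcal{X}_0$ at $q'$; the original embedding coordinates $\eta_i$ could be transcendental over them. The paper closes this (its Case~2) by proving that each $\eta_i/\sigma'_0$ is bounded on the image $\mathcal{V}=F(B)$ and hence extends holomorphically across the exceptional divisor $D\subset\mathcal{V}$; the bound comes from the identity $(\sum|\eta_i|^2)^{1/m}/|\sigma'_0|^{2/m}=e^{\varphi_0-\varphi_{\mathcal{V}}}$ together with the local $L^\infty$ control on $\varphi_0$ away from $\textnormal{LCS}(\mathcal{X}_0)$ established in Section~3. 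Only then can one conclude that a neighborhood of $q'$ in $\mathcal{X}_0$ is biholomorphic to $\mathcal{V}$ and hence smooth.

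Second, you never rule out the possibility that a sequence $q_j\in\mathcal{R}_{\mathcal{X}_0}$ converging to a regular point $q\in\mathcal{R}_{\mathbf{Y}}$ simultaneously converges in $(\mathcal{X}_0,\chi_0)$ to some $q'\in\textnormal{LCS}(\mathcal{X}_0)$. In that case every extended section $\sigma'_0$ vanishes at $q'$ by Lemma~\ref{ext1}, so your map $F$ is not even well defined there, and the dimension argument collapses. The paper handles this separately (its Case~1): writing $f=\sigma'_0/\eta_0$, one has $f(q')=0$, yet from $|\sigma_0|^{2/m}=\Omega_0\,e^{\varphi_0-\varphi_{\mathcal{V}}}$ and the lower bound $\varphi_0\geq\epsilon\log|\sigma_{G_p}|^2-C_\epsilon$ one gets $|f|^2\geq C_\epsilon|f|^{2\epsilon}$ along $q_j$, forcing $\liminf|f(q_j)|>0$, a contradiction.

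Both cases hinge on the $L^\infty$ estimates for $\varphi_0$ from Lemma~\ref{existsol}; your sketch does not invoke them, and without them the argument does not close.
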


\begin{proof} Obviously, $$\mathcal{R}_{\mathbf{Y}} \supset \mathcal{R}_{\mathcal{X}_0}$$ from Lemma \ref{GHcon} and it suffices to show the other direction.

We prove by contradiction. Suppose $q\in \mathcal{R}_{\mathbf{Y}} \setminus \mathcal{R}_{\mathcal{X}_0}$ in $(\mathbf{Y}, d_{\mathbf{Y}})$. Since $\mathcal{R}_{\mathcal{X}_0}$ is dense in $\mathbf{Y}$, there exist a sequence of points $q_j \in \mathcal{R}_{\mathcal{X}_0}$ such that $q_j \rightarrow q$ with respect to $d_{\mathbf{Y}}$ in $\mathbf{Y}$.  We can assume after passing to a subsequence that $q_j$ converges to $q' \in \mathcal{X}_0$ with respect to $\chi_0$ in $\mathcal{X}_0$. 

Since $q$ is a regular point in $(\mathbf{Y}, d_{\mathbf{Y}})$, there exists $r>0$ such that $B_{d_{\mathbf{Y}}} (q, r)$ is a smooth open domain in the Euclidean space $\mathbb{C}^n$, and $d_{\mathbf{Y}}$ induces a smooth K\"ahler metric on $B_{d_{\mathbf{Y}}}(q, r)$.  Furthermore, there exist $t_i \rightarrow 0$  and $x_i \in \mathcal{X}_{t_i}$ such that, $B_{g_{t_i}}(x_i, r) \subset \mathcal{X}_{t_i}$ converges smoothly to $B_{d_{\mathbf{Y}}}(q, r)$.  By the partial $C^0$-estimates for regular points, there exist global sections $\sigma_0, \sigma_1, ..., \sigma_n$ of $H^0(\mathbf{Y}, m K_{\mathbf{Y}})$ for some sufficiently large $m\in \mathbb{Z}^+$ such that after making $r$ sufficiently small, $\sigma_0$ does not vanish on $B_{d_{\mathbf{Y}}}(q, r)$ and 
$$\frac{\sigma_1}{\sigma_0}, ..., \frac{\sigma_n}{\sigma_0}$$
are holomorphic local coordinates on $B_{d_{\mathbf{Y}}}(q, r)$. In particular, the $L^2$-norm of each $\sigma_k$ is bounded with respect to $h_\mathbf{Y}$ and $d_\mathbf{Y}$. Therefore we have a biholomrphism


%
%
%
%
%
%
\begin{equation}\label{imv}
F: B_{d_\mathbf{Y}}(q, r) \rightarrow \mathcal{V} \in \mathbb{C}^n 
\end{equation}
by letting $F=\left( \frac{\sigma_1}{\sigma_0}, ..., \frac{\sigma_n}{\sigma_0} \right) $.

Also each $\sigma_k$ can be uniquely extended to a pluricanonical section on  $\mathcal{X}_0$, 
for $k=0, 1, ...n$.  We write it as $\sigma'_k$. As before, we can assume that the adapted volume measure $\Omega_0= \left(\sum_{l=0}^N \eta_l \wedge \overline{\eta_l } \right)^{1/m}$, where $\{\eta_l\}_{l=0}^N $ gives a global projective embedding of $\mathcal{X}_0$. 
We will discuss in the following two cases. 

\begin{enumerate}


\item Suppose $q' \in \textnormal{LCS}(\mathcal{X}_0)$. The K\"ahler-Einstein volume form on $B_{d_\mathbf{Y}}(q, r)$ is given by
$$|\sigma_0|^{2/m} e^{\varphi_{\mathcal{V}}}, $$
where $\varphi_{\mathcal{V}}$ is a smooth bounded plurisubharmonic function on $B_{d_\mathbf{Y}}(q, r)$.  Therefore on $\mathcal{R}_{\mathcal{X}_0}\cap B_{d_\mathbf{Y}}(q, r)$, we have
$$|\sigma_0|^{2/m} = \left( \sum_{l=0}^N \eta_l \wedge \overline{ \eta_l} \right)^{1/m} e^{\varphi_0- \varphi_{\mathcal{V}}},  $$
where $\varphi_0$ satisfies the Monge-Amp\`ere equation 
$$(\chi_0+\ddbar\varphi_0)^n = e^{\varphi_0} \Omega_0. $$
Suppose $\eta_0$ generates  $m K_{\mathcal{X}_0}$ at $q'$. Then
$$f=\frac{\sigma'_0}{\eta_0}$$
is a meromorphic function on $\mathcal{X}_0$, where $\sigma'_k$ is the unique extension of $\sigma_k$ from $\mathcal{R}_{\mathcal{X}_0}$ to $\mathcal{X}_0$ for $k=0, 1, ..., n$.  $f$ is a holomorphic function in a neighborhood $U_{q'}$ in $\mathcal{X}_0$ and $f(q')=0$ since $\sigma'_0$ vanishes at $q'$. We can replace $U_{q'}$ by a smooth neighborhood $\widetilde{U_{q'}}$ after a log resolution of $\mathcal{X}_0$. Then the pullback of $f$ on $U_{q'}$ vanishes along a divisor $E$ (containing the exceptional divisor over $\textnormal{LCS}(\mathcal{X}_0) )$ and so it must vanish to order of at least $1$. On the other hand, after pulling back $\varphi_0$ to $\widetilde{U_{q'}}$,  for any $\epsilon>0$ there exists $C_\epsilon>0$ such that at each pre-image of $q_j$ in $\widetilde{U_{q'}}$, 
$$|f|^2 \geq e^{\varphi_0 - \varphi_{\mathcal{V}}} \geq C_\epsilon |f|^{2\epsilon} $$  because $\varphi_0$ is bounded below any log poles along the exceptional divisor over $\textnormal{LCS}(\mathcal{X}_0)$ and $\varphi_{\mathcal{V}}$ is uniformly bounded at $q_j$. Then we have
$$\liminf_{j\rightarrow \infty} |f|(q_j) >0. $$ This contradicts the fact that $f(q')=0$.

\medskip

\item  $q' \notin \textnormal{LCS}(\mathcal{X}_0)$.  Without loss of generality, we can assume that  there exists $C>0$ such that for any $x\in B_{d_\mathbf{Y}}(q, r) \cap\mathcal{R}_{\mathcal{X}_0}$, 
$$d_{\chi} (x, \textnormal{LCS}(\mathcal{X}_0)) > C. $$
Otherwise, there exist a sequence of $x_j \in  B_{d_\mathbf{Y}}(q, r)  \cap \mathcal{R}_{\mathcal{X}_0}$ such that $x_j$ converges to some $x' \in \textnormal{LCS}(\mathcal{X}_0)$ in $(\mathcal{X}_0, \chi_0)$. This can be reduced to the previous case. 

Immediately, we can show the extension $\sigma'_0$ on $\mathcal{X}_0$ from $\sigma_0$ does not vanish near $q'$ because $|\sigma'_0|^{2/m} e^{\varphi_{\mathcal{V}} }= \Omega_0 e^{\varphi_0}$ and since $\varphi_{\mathcal{V}}(q_j)$ and $\varphi_0(q_j)$ are both uniformly bounded for all $j$. Since $\frac{\sigma_k (q_j)}{\sigma_0(q_j)}=\frac{\sigma_k' (q_j)}{\sigma_0'(q_j)}$ converges to $\frac{\sigma_k(q)}{\sigma_0(q)}$, $\frac{\sigma_k'(q')}{\sigma'_0(q')} = \frac{\sigma_k(q)}{\sigma_0(q)}$. Therefore we also obtain a holomorphic map 
%
%
%
$$F' = \left(\frac{\sigma_1'}{\sigma'_0}, ..., \frac{\sigma'_n}{\sigma'_0} \right): \mathcal{X}_0 \setminus \{ \sigma'_0=0\}\rightarrow \mathbb{C}^n, ~ F'(q') = F(q).$$
Let $U = (F')^{-1} (\mathcal{V})$, where $\mathcal{V}$ is given by (\ref{imv}). Then $U$ is an open set in $\mathcal{X}_0$ and $q' \in U$. 

Since the nonsingular part of $\mathcal{X}_0$ is open and dense in $\mathbf{Y}$, there exists a subvariety $D$ of $\mathcal{V}$ such that $$F'|_{(F')^{-1}(\mathcal{V}\setminus D)}: (F')^{-1}(\mathcal{V}\setminus D) \rightarrow \mathcal{V}\setminus D$$  is biholomorphic. 
%

Recall $\{\eta_0, ..., \eta_N \}$ gives a projective embedding of $\mathcal{X}_0$. There exists a   sufficiently small open neighborhood $U_{q'}$ of $q'$ in $\mathcal{X}_0$ such that
$$\gamma= \left(\frac{\eta_1}{\sigma'_0}, ..., \frac{\eta_N}{\sigma'_0} \right): U_{q'} \rightarrow \mathbb{C}^N $$ is  a local affine embedding, where we assume $\eta_0$ does not vanish on $U_{q'}$ (we can always shrink $U_{q'}$). 
We can also assume $F'(U_{q'}) \subset \mathcal{V}$ by continuity of $F'$ since $\sigma'_0(q')\neq 0$ and $F'(q')\in \mathcal{V}$. 
We now consider the map $F''$ defined by
$$ F'' =  \left(\frac{\sigma_1'}{\sigma'_0}, ..., \frac{\sigma'_n}{\sigma'_0}, \frac{\eta_1}{\sigma'_0}, ..., \frac{\eta_N}{\sigma'_0} \right): U_{q'} \rightarrow \mathbb{C}^n\times \mathbb{C}^N . $$
Clearly, $F''$ is also an affine embedding of $U_{q'}$. 
We identify $\frac{\eta_i}{\sigma'_0}$ on $(F')^{-1}(\mathcal{V}\setminus D) $ and $\frac{\eta_i}{\sigma'_0} \circ (F'^{-1})$ on   $\mathcal{V}\setminus D$. Each $\frac{\eta_i}{\sigma'_0}$ is a holomorphic function on $\mathcal{V}\setminus D$ and 
$$\sup_{\mathcal{V}\setminus D} \frac{ \left( \sum_{i=0}^N \eta_i \wedge \overline{\eta_i}\right)^{1/m}}{ |\sigma'_0|^{2/m} } = \sup_{\mathcal{V}\setminus D} e^{\varphi_0 - \varphi_{\mathcal{V}}} < \infty $$
Therefore each $\frac{\eta_i}{\sigma'_0}$ is uniformly bounded on $\mathcal{V}$ and it extends to a holomorphic function in $\mathcal{V}$. In particular, each $\frac{\eta_i}{\sigma'_0}$ is a holomorphic function  in $\left(\frac{\sigma_1'}{\sigma'_0}, ..., \frac{\sigma'_n}{\sigma'_0} \right)$ on $\mathcal{V}$ for $i=0, 1, ..., N$. So 
$F''$ is an isomorphism between $U_{q'}$ and $F'(U_{q'})$. But $F'(U_{q'})$ lies in the graph of $\mathcal{V}$ of $\left(\frac{\eta_1}{\sigma'_0}, ..., \frac{\eta_N}{\sigma'_0} \right)$ in $\mathbb{C}^n \times \mathbb{C}^N$ and so $F'(U_{q'})$ must be nonsingular. Contradiction.

\end{enumerate}

\end{proof}

\begin{corollary}

Let $\mathcal{A}$ be the number of the component of $\mathcal{X}_0$ and $\mathcal{B}$ the number of the components of $\mathbf{Y}$. 
 Then $$ \mathcal{A} = \mathcal{B}.$$
\end{corollary}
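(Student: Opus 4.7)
The plan is to deduce $\mathcal{A} = \mathcal{B}$ by counting connected components of the regular part on each side of the identification provided by Lemma \ref{regularid}. From Lemma \ref{GHcon}(4) we already have the inequality $\mathcal{B} \leq \mathcal{A}$, so only the reverse is substantive. The key input is the biholomorphism $\mathcal{R}_{\mathbf{Y}} \cong \mathcal{R}_{\mathcal{X}_0}$ established in Lemma \ref{regularid}, which implies in particular a bijection between connected components.

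First I would count the components of $\mathcal{R}_{\mathcal{X}_0}$. Since each irreducible component $X_\alpha$ is an irreducible normal (up to its nodal codimension one part) variety, its nonsingular locus $\mathcal{R}_{X_\alpha}$ is connected. Moreover, any point lying in $X_\alpha \cap X_{\alpha'}$ for $\alpha \neq \alpha'$ is necessarily a singular point of $\mathcal{X}_0$. Hence
\begin{equation*}
\mathcal{R}_{\mathcal{X}_0} = \coprod_{\alpha=1}^{\mathcal{A}} \mathcal{R}_{X_\alpha},
\end{equation*}
and this decomposition has exactly $\mathcal{A}$ connected components.

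Next I would count the components of $\mathcal{R}_{\mathbf{Y}}$. By construction $\mathbf{Y} = \coprod_{\beta=1}^{\mathcal{B}} (Y_\beta, d_\beta)$, and each $Y_\beta$ is a connected complete length space. By the Cheeger-Colding structure theory invoked in Lemma \ref{GHcon}(1), the singular set $\mathcal{S}_\beta \subset Y_\beta$ is closed with Hausdorff dimension at most $2n-4$. Since $Y_\beta$ is a connected length space of Hausdorff dimension $2n$, removing a closed set of codimension at least $4$ cannot disconnect it, and so $\mathcal{R}_{Y_\beta}$ is connected. Therefore $\mathcal{R}_{\mathbf{Y}}$ has exactly $\mathcal{B}$ connected components.

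Finally, the biholomorphism $\mathcal{R}_{\mathbf{Y}} \cong \mathcal{R}_{\mathcal{X}_0}$ from Lemma \ref{regularid} is in particular a homeomorphism and thus preserves the number of connected components, giving $\mathcal{A} = \mathcal{B}$. The only nontrivial ingredient beyond bookkeeping is the non-disconnection property of the singular set in each $Y_\beta$, which is a standard consequence of the Hausdorff codimension bound; everything else is immediate from previously established lemmas.
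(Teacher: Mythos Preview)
Your overall strategy is the same as the paper's: use the identification $\mathcal{R}_{\mathbf{Y}}=\mathcal{R}_{\mathcal{X}_0}$ from Lemma~\ref{regularid}, observe that $\mathcal{R}_{\mathcal{X}_0}$ has exactly $\mathcal{A}$ connected components, and conclude by showing that each $\mathcal{R}_{Y_\beta}$ is connected so that $\mathcal{R}_{\mathbf{Y}}$ has exactly $\mathcal{B}$ components. The paper argues the last point by contradiction: if nonsingular points $p\in X$ and $p'\in X'$ from distinct components of $\mathcal{X}_0$ lay in the same $Y_\beta$, a minimal geodesic between them would lie entirely in $\mathcal{R}_{\mathbf{Y}}=\mathcal{R}_{\mathcal{X}_0}$, contradicting the disconnectedness of $\mathcal{R}_{\mathcal{X}_0}\cap(X\cup X')$.

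The gap in your write-up is precisely at the step you flag as ``the only nontrivial ingredient.'' The assertion that removing a closed set of Hausdorff codimension at least $4$ from a connected length space of dimension $2n$ cannot disconnect it is \emph{false} for general length spaces (e.g.\ two $n$-balls glued at a single point). In the noncollapsed Ricci limit setting the connectedness of $\mathcal{R}_{Y_\beta}$ is a genuine theorem, not a consequence of the Hausdorff dimension bound alone: it follows from the geodesic convexity of the regular set established by Colding--Naber \cite{CN}, which is exactly what the paper invokes. Once you replace your codimension claim with this citation, your argument is correct and coincides with the paper's.
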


\begin{proof} Let $p$ and $p'$ be two nonsingular points in distinct components $X$ and $X'$ of $\mathcal{X}_0$. Then there exists a smooth minimal geodesic $\gamma$ joining $p$ and $p'$ in $(\mathbf{Y}, d_{\mathbf{Y}})$ by the geodesic convexity result in \cite{CN}, and $\gamma$ lies entirely in $\mathcal{R}_{\mathbf{Y}}=\mathcal{R}_{\mathcal{X}_0}$. The corollary is proved because $\mathcal{R}_{\mathcal{X}_0}\cap (X\cup X')$  is not connected.

\end{proof}






We now prove another version of Schwarz lemma with suitable barriers. 

\begin{lemma} \label{sch4} Let $\omega_0=  \chi_0+ \ddbar \varphi_0$ be the unique K\"ahler-Einstein current on $\mathcal{X}_0$. For any compact set $K \subset\subset \mathcal{X}_0\setminus \textnormal{LCS}(\mathcal{X}_0) $, there exists $c=c(K) >0$ such that 

$$\omega_0 \geq c \chi_0 $$ on $K\cap \mathcal{R}_{\mathcal{X}_0}$.

\end{lemma}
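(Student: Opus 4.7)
The plan is to apply a Schwarz-lemma style maximum principle to the approximating potentials $\psi_{p, s, \delta, \epsilon}$ constructed in Section 3 and pass to the limit. Fix a compact set $K \subset\subset \mathcal{X}_0 \setminus \textnormal{LCS}(\mathcal{X}_0)$. By Lemma \ref{effdiv} and a finite covering argument, it suffices to establish the estimate on each of finitely many open sets $U_\alpha$ that cover $K$, where each $U_\alpha$ is disjoint from a fixed effective divisor $G_{p_\alpha}$ containing $\textnormal{LCS}(\mathcal{X}_0)$.

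Working on a log resolution $\pi: Y \to \mathcal{X}_0$, set $\omega_s = (1+\delta)\chi + s\theta + \ddbar \psi_{p_\alpha, s, \delta, \epsilon}$. Mimicking the proof of Lemma \ref{sch1}, consider the test function
$$H_s = \log tr_{\omega_s}(\chi) + \epsilon_1 \log F - A \psi_{p_\alpha, s, \delta, \epsilon},$$
where $F$ is a product (with smooth hermitian metrics) of defining sections of the exceptional divisors of $\pi$ lying over $\textnormal{LCS}(\mathcal{X}_0)$, chosen so that $H_s \to -\infty$ along those divisors and the maximum is attained in the interior of $\pi^{-1}(\mathcal{X}_0 \setminus \textnormal{LCS}(\mathcal{X}_0))$. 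Combining the standard Schwarz inequality $\Delta_{\omega_s} \log tr_{\omega_s}(\chi) \geq -C\, tr_{\omega_s}(\chi) - C$ (using the uniformly bounded bisectional curvature of $\chi$ inherited from the projective embedding of $\mathcal{X}_0$) with the approximating Monge--Amp\`ere equation $\Delta_{\omega_s} \psi_s \leq n - (1+\delta)\, tr_{\omega_s}(\chi)$ and the bounded curvature of the hermitian metrics in $\ddbar \log F$, one obtains for $A$ sufficiently large
$$\Delta_{\omega_s} H_s \geq \tfrac{A}{2}\, tr_{\omega_s}(\chi) - C_A.$$
At the interior maximum, $tr_{\omega_s}(\chi) \leq 2 C_A / A$. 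Coupled with the uniform bound $\psi_{p_\alpha, s, \delta, \epsilon} \geq \epsilon_0 \log |\sigma_{G_{p_\alpha}}|^2_{h_\Omega} - C$ from Lemma \ref{0logp} (noting that $|\sigma_{G_{p_\alpha}}|$ is bounded below on $U_\alpha$) and the uniform upper bound on $\psi_{p_\alpha, s, \delta, \epsilon}$, this yields a uniform upper bound on $H_s$ and hence on $tr_{\omega_s}(\chi)$ on the preimage of $U_\alpha \cap \mathcal{R}_{\mathcal{X}_0}$.

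Passing to the limit $s, \delta, \epsilon \to 0$ via the smooth convergence on compact subsets of $\mathcal{X} \setminus \mathcal{S}_{\mathcal{X}_0}$ from Lemma \ref{55}, and using the uniqueness in Lemma \ref{uniq} to identify the limit with $\omega_0$, we obtain $tr_{\omega_0}(\chi_0) \leq C(U_\alpha)$ on $U_\alpha \cap \mathcal{R}_{\mathcal{X}_0}$, which is equivalent to $\omega_0 \geq c(U_\alpha) \chi_0$ there. The final estimate on $K \cap \mathcal{R}_{\mathcal{X}_0}$ follows by finite patching.

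The main obstacle is the careful choice of the barrier $F$: it must be rich enough to force the maximum of $H_s$ into the interior, yet must exclude those exceptional divisors of $\pi$ lying over log terminal singularities of $\mathcal{X}_0$ that may meet $K$, since including them would deteriorate the final estimate along those divisors. Separating the contributions of exceptional divisors with discrepancy $-1$ (over $\textnormal{LCS}(\mathcal{X}_0)$) from those with discrepancy $\geq 0$ (over log terminal singularities) is the principal technical point, and one must rely on the uniform $L^\infty$ boundedness of $\psi_s$ away from $G_{p_\alpha}$ to control the maximum principle across the canonical exceptional divisors that are not included in the barrier.
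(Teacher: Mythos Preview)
Your overall strategy (Chern--Lu inequality plus barrier plus maximum principle) is right, and is the same as the paper's. But the proposal as written has a genuine gap precisely where you yourself flag the ``main obstacle,'' and the resolution you suggest does not close it.

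The issue is this. If your barrier $F$ includes only the exceptional divisors over $\textnormal{LCS}(\mathcal{X}_0)$, then nothing forces the maximum of $H_s$ away from the remaining exceptional divisors (those with discrepancy $>-1$, lying over log terminal singularities that may sit inside $K$). Uniform $L^\infty$ boundedness of $\psi_s$ there gives you nothing about $tr_{\omega_s}(\chi)$ at those points. Conversely, if you include \emph{all} exceptional divisors in $F$, you do get a global bound, but it deteriorates like $|\sigma_F|^{-2\epsilon_1}$ and therefore blows up as you approach the log terminal singular locus inside $K$, which is exactly where the lemma demands a uniform bound on $K\cap\mathcal{R}_{\mathcal{X}_0}$.

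The paper resolves this with a two-parameter barrier and a limiting argument that you are missing. Working directly with the limit $\omega_0$ on $\mathcal{X}_0$ (not with the approximations $\psi_{p,s,\delta,\epsilon}$ on $Y$), one takes
\[
H_{\epsilon,\epsilon'} \;=\; \log tr_{\omega_0}(\chi_0) \;-\; 3A\varphi_0 \;+\; \epsilon\log|\sigma_{G_p}|^2_{h_{\Omega_0}} \;+\; \epsilon'\log|\sigma_{\mathcal{F}}|^2_{h_{\Omega_0}},
\]
where $G_p$ is an effective $\mathbb{Q}$-divisor containing $\textnormal{LCS}(\mathcal{X}_0)$ (Lemma~\ref{effdiv}) and $\mathcal{F}$ is an effective $\mathbb{Q}$-divisor containing the \emph{entire} singular set $\mathcal{S}_{\mathcal{X}_0}$. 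Lemma~\ref{sch1} (passed to the limit) guarantees $H_{\epsilon,\epsilon'}\to-\infty$ near $\mathcal{S}_{\mathcal{X}_0}$, so the maximum lies in $\mathcal{R}_{\mathcal{X}_0}$ and the maximum principle applies there. The Laplacian inequality gives $\Delta_{\omega_0}H_{\epsilon,\epsilon'}\ge A\,tr_{\omega_0}(\chi_0)-C$ with $C$ independent of $\epsilon'$, and the a priori estimate $\varphi_0\ge\epsilon\log|\sigma_{G_p}|^2-C_\epsilon$ (Lemma~\ref{existsol}) then yields
\[
tr_{\omega_0}(\chi_0)\;\le\; C(\epsilon)\,|\sigma_{G_p}|^{-2\epsilon}_{h_{\Omega_0}}\,|\sigma_{\mathcal{F}}|^{-2\epsilon'}_{h_{\Omega_0}}
\]
with $C(\epsilon)$ \emph{independent of} $\epsilon'$. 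Letting $\epsilon'\to 0$ removes the $\mathcal{F}$-factor entirely, leaving a bound that blows up only along $G_p\supset\textnormal{LCS}(\mathcal{X}_0)$ and is therefore uniform on any $K\subset\subset\mathcal{X}_0\setminus\textnormal{LCS}(\mathcal{X}_0)$ after a finite cover by suitable $G_{p_\alpha}$.

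Two further remarks. First, your citation of Lemma~\ref{55} for the convergence $\psi_{p,s,\delta,\epsilon}\to\varphi_0$ is misplaced: that lemma concerns the family $\varphi_t$ over $B$, not the approximants on $Y$; the relevant convergence is in Lemma~\ref{existsol}. Second, working on the approximations is not wrong in principle, but it is unnecessary here: once Lemma~\ref{sch1} is available in the limit, the maximum principle can be run directly on $\omega_0$ over $\mathcal{R}_{\mathcal{X}_0}$, which is cleaner and avoids the extra limit.
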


\begin{proof}

 Since $\mathcal{X}_0$ is a semi-log canonical model, by Lemma \ref{effdiv}, for any $p \in \mathcal{X}_0 \setminus \textnormal{LCS}(\mathcal{X}_0)$, there exists an effective $\mathbb{Q}$-divisor $G_p$ numerically equivalent to $K_{\mathcal{X}_0}$ such that $p$ does not lie in $G_p$ and the support of  $\textnormal{LCS}(\mathcal{X}_0)$ is contained in the support of $G_p$. Let $\sigma_{G_p}$ be the defining divisor of $G_p$ and $h_{\Omega_0}$ be the hermitian metric on $G_p$ so that $Ric(h_{\Omega_0})=\chi_0 = \chi|_{\mathcal{X}_0}$. 

By similar argument in Lemma \ref{effdiv}, we can also find an effective $\mathbb{Q}$-divisor $\mathcal{F}$ numerically equivalent to $K_{\mathcal{X}_0}$ such that   the support of  the singular set $\mathcal{S}_{\mathcal{X}_0}$ of $\mathcal{X}_0$ is contained in the support of $\mathcal{F}$. Let $\sigma_{\mathcal{F}}$ be the defining divisor of $\mathcal{F}$.

Let $\omega_0 = \chi_0 + \ddbar \varphi_0$ be the unique K\"ahler-Einstein current on $\mathcal{X}_0$. By Lemma \ref{existsol}, there exists $C_1>0$ and for any $\epsilon>0$, there exists $C_2= C_2(\epsilon)>0$ such that 
$$ -C_2 + \epsilon \log |\sigma_{G_p}|^2_{h_{\Omega_0}} \leq \varphi_0 \leq C_1. $$

We now define
$$H_{\epsilon, \epsilon'}  = \log tr_{\omega_0}(\chi_0) - 3A\varphi_0+ \epsilon \log |\sigma_{G_p}|^2_{h_{\Omega_0}} + \epsilon' \log |\sigma_{\mathcal{F}}|^2_{h_{\Omega_0}}.$$ 
By Lemma \ref{sch1}, for any sufficiently small $\epsilon'>0$, we have
$$\sup_{\mathcal{R}_{\mathcal{X}_0}} |\sigma_{F}|^{2\epsilon'}_{h_{\Omega_0}} tr_{\omega_0}(\chi_0) <\infty$$
and  $|\sigma_{F}|^{2\epsilon'}_{h_{\Omega_0}} tr_{\omega_0}(\chi_0)$ tends $-\infty$ near the support $\mathcal{F}$.

Straightforward calculations show that  on the nonsingular part of $\mathcal{X}_0$, for a fixed sufficiently large $A>0$, there exists $C_3>0$ such that  for all sufficiently small $\epsilon, \epsilon'>0$, 
$$\Delta_{\omega_0} H_{\epsilon, \epsilon'} \geq   A tr_{\omega_0}(\chi_0) - C_3. $$
We can apply the maximum principle for $H_{\epsilon, \epsilon'}$ at its maximal point, which must lie in the nonsingular part of $\mathcal{X}_0$. By the estimate for $\varphi_0$, there exists $C_4=C_4(\epsilon)>0$ such that 
$$H_{\epsilon, \epsilon'}  \leq C_4 $$
and so on $\mathcal{R}_{\mathcal{X}_0}$, there exists $C_5=C_5(\epsilon)>0$ such that
$$tr_{\omega_0}(\chi_0) \leq  |\sigma_{G_p}|^{-2\epsilon}_{h_{\Omega_0}}  |\sigma_{\mathcal{F}}|^{-2\epsilon'} _{h_{\Omega_0}}e^{C_4 + 3A\varphi_0}\leq C_5  |\sigma_{G_p}|^{-2\epsilon}_{h_{\Omega_0}}  |\sigma_{\mathcal{F}}|^{-2\epsilon'} _{h_{\Omega_0}}.$$
Since the constant $C_5$ does not depend on $\epsilon'$, the lemma is proved by letting $\epsilon'\rightarrow0$. 

\end{proof}

\begin{lemma} \label{fest} Let $X$ be a component of $\mathcal{X}_0$ and $p \in X \cap \mathcal{R}_{\mathcal{X}_0}$. For any $K \subset\subset \mathcal{X}_0 \setminus \textnormal{LCS} (\mathcal{X}_0)$, there exists $C_K>0$ such that for any $q\in X \cap \mathcal{R}_{\mathcal{X}_0}  \cap K$,    
\begin{equation}
d_{\mathbf{Y}}(p, q) \leq C_K.
\end{equation}

\end{lemma}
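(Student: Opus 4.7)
I plan to construct a piecewise smooth path from $p$ to $q$ lying inside $\mathcal{R}_X \setminus \textnormal{LCS}(\mathcal{X}_0)$ whose $\omega_0$-length is uniformly bounded by a constant depending only on $K$ and $p$, and then convert this into a bound on $d_{\mathbf{Y}}(p, q)$ using the smooth convergence $g_t \to g_{KE}$ on $\mathcal{R}_{\mathcal{X}_0}$ together with pointed Gromov-Hausdorff convergence.

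On any compact subset $K_0 \subset\subset \mathcal{R}_{\mathcal{X}_0}$ (so $K_0$ avoids every singular point of $\mathcal{X}_0$), I would combine the one-sided Schwarz inequality $\omega_0 \geq c(K_0)\chi_0$ from Lemma \ref{sch4} with the Monge-Amp\`ere equation $(\chi_0 + \ddbar \varphi_0)^n = e^{\varphi_0}\Omega_0$, the uniform bound $\varphi_0 \leq C$ obtained by passing Corollary \ref{5upb} to the limit $t \to 0$, and the elementary inequality $\Omega_0 \leq C(K_0) \chi_0^n$ valid on compact subsets of $\mathcal{R}_{\mathcal{X}_0}$, to derive the two-sided comparison $c(K_0)\chi_0 \leq \omega_0 \leq C(K_0)\chi_0$. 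The resulting Lipschitz comparison between $\omega_0$- and $\chi_0$-intrinsic distances on $K_0$ lets one bound the $\omega_0$-length of a piecewise smooth path from $p$ to any point of $K_0$ by a multiple of an a-priori $\chi_0$-length constant $L_K$, obtained from compactness of $K$ and connectedness of $\mathcal{R}_X \setminus \textnormal{LCS}(\mathcal{X}_0)$ (which holds since $X$ is irreducible and both $\mathcal{S}_X$ and $\textnormal{LCS}(\mathcal{X}_0) \cap X$ are proper closed complex analytic subsets of complex codimension $\geq 1$).

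For $q$ approaching the log terminal singular locus $\mathcal{S}_{\mathcal{X}_0} \setminus \textnormal{LCS}(\mathcal{X}_0)$ inside $K$, the previous uniform upper bound $\omega_0 \leq C\chi_0$ breaks down, and I would use a local log resolution $\pi: \widetilde U \to U$ with $K_{\widetilde U} = \pi^* K_{\mathcal{X}_0}|_U + \sum a_i E_i$, $a_i > -1$. The pullback $\pi^*\Omega_0 = \widetilde \Omega \prod |\sigma_{E_i}|^{-2a_i}$ is locally integrable precisely because $a_i > -1$, and transverse integration on $\widetilde U$ shows that the $\pi^*\omega_0$-length of a transverse arc across each $E_i$ is finite and bounded by a constant depending only on the local data. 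Chaining such local bounded-length arcs to the previous global construction on $K_0$ yields the desired path $\gamma$ from $p$ to $q$ of uniformly bounded $\omega_0$-length. Lifting $\gamma$ to smooth curves $\gamma_t \subset (\mathcal{X}_t, g_t)$ through sections of $\pi$ at $p$ and $q$, and using $g_t \to g_{KE}$ smoothly on $\mathcal{R}_{\mathcal{X}_0}$ together with pointed Gromov-Hausdorff convergence, delivers $d_{\mathbf{Y}}(p, q) \leq \lim_{t \to 0} L_{g_t}(\gamma_t) = L_{\omega_0}(\gamma) \leq C_K$.

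The main obstacle is precisely the local analysis near log terminal singularities, where one or more eigenvalues of $\omega_0$ relative to $\chi_0$ blow up because $\Omega_0/\chi_0^n$ is unbounded there. The argument must leverage the precise discrepancy condition $a_i > -1$, which is exactly the analytic condition under which $\omega_0$-lengths of transverse paths remain finite. In practice this may require the auxiliary K\"ahler-Einstein currents furnished by Lemma \ref{Finfty}, together with an explicit comparison on the log resolution and the capacity-theoretic $L^\infty$-bounds of \cite{Kol1, EGZ, Z}, to convert the local integrability into uniform transverse distance estimates.
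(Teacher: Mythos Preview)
Your approach is genuinely different from the paper's, and it has a real gap at the step you yourself flag as the main obstacle. The paper does \emph{not} build a path of bounded $\omega_0$-length from $p$ to $q$; instead it argues by contradiction. If $d_{\mathbf{Y}}(p,q_j)\to\infty$ for some $q_j\in K\cap\mathcal{R}_{\mathcal{X}_0}$, then since the component of $\mathbf{Y}$ containing $p$ has finite total volume, the metric balls $B_{d_{\mathbf{Y}}}(q_j,2r_0')$ have volume $\epsilon_j\to 0$. One then solves an auxiliary Monge--Amp\`ere equation $(\chi_0+\ddbar\phi_j)^n=e^{\phi_j}F_j\Omega_0$ with a weight $F_j$ equal to $A_j=\epsilon_j^{-1/(1+\delta)}\to\infty$ on $B_{d_{\mathbf{Y}}}(q_j,r_0')$ and $1$ elsewhere; the $L^{1+\delta}$-norm of $F_j$ stays bounded, so Lemma~\ref{Finfty} gives uniform $L^\infty$-bounds on $\phi_j$. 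A local Schwarz lemma (with a cut-off in the $d_{\mathbf{Y}}$-distance and Calabi's trick at cut points) then forces $(\omega_j)^n/(\omega_0)^n\leq C$ on $B_{d_{\mathbf{Y}}}(q_j,r_0')$, contradicting $(\omega_j)^n/(\omega_0)^n=A_j e^{\phi_j-\varphi_0}\to\infty$.

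The gap in your plan is the claim that ``transverse integration on $\widetilde U$ shows that the $\pi^*\omega_0$-length of a transverse arc across each $E_i$ is finite and bounded.'' The discrepancy condition $a_i>-1$ controls only the \emph{volume form}: on the resolution one has $\pi^*\omega_0^n=e^{\varphi_0}\pi^*\Omega_0\sim\prod|\sigma_{E_i}|^{-2a_i}\Omega_Y$, which is integrable. It does \emph{not} control individual eigenvalues of $\pi^*\omega_0$ relative to a smooth reference metric $\theta$. The only lower bound available, $\omega_0\geq c\,\chi_0$ from Lemma~\ref{sch4}, pulls back to $\pi^*\omega_0\geq c\,\pi^*\chi_0$, and $\pi^*\chi_0$ is degenerate along the fibres of $\pi|_{E_i}$ (possibly in many directions, not just one); so you cannot bound $n-1$ eigenvalues from below and thereby cap the remaining one via the volume. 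The second-order estimate in Lemma~\ref{2ndem} gives $tr_\theta(\pi^*\omega_0)\leq C(|\sigma_D|^2|\sigma_{G_p}|^2)^{-2\lambda}$, but $\lambda$ there is not small, so $\int|\sigma_D|^{-2\lambda}$ along a transverse arc need not converge. In short, nothing in the available estimates yields the pointwise upper bound on $\omega_0$ in the transverse direction that your length computation needs. The paper's indirect argument via auxiliary K\"ahler--Einstein currents and the local Schwarz lemma is designed precisely to avoid this missing piece; your last paragraph gestures toward Lemma~\ref{Finfty}, which is indeed the key tool, but the way it is used is the contradiction mechanism above, not a direct transverse length bound.
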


\begin{proof} We prove by contradiction.  Suppose there exist a sequence of points $q_j \in X \cap \mathcal{R}_{\mathcal{X}_0} \cap K$ such that 
$$d_{\mathbf{Y}}(q_j, p) \rightarrow \infty. $$
We can assume that $q_j \rightarrow q$ in  $(\mathcal{X}_0, \chi_0)$, where $q$ must be a singular point in  $\mathcal{X}_0$ away from $\textnormal{LCS} (\mathcal{X}_0)$.

Then there exists $r_0>0$ such that for all $j$, $B_{\chi_0}(q_j, r_0)$, the geodesic ball in $(\mathcal{X}_0, \chi_0)$ centered at $q_j$ with radius $r_0$,  lies outside an open neighborhood $U$ of $\textnormal{LCS}  (\mathcal{X}_0)$ in $(\mathcal{X}_0, \chi_0)$ since $q$ is away from $\textnormal{LCS} (\mathcal{X}_0)$. By  Lemma \ref{sch4} and geodesic convexity of $\mathcal{R}_{\mathbf{Y}}=\mathcal{R}_{\mathcal{X}_0}$ in $(\mathbf{Y}, d_{\mathbf{Y}})$, there exists $r_0'>0$ such that for all $j$, %
$$B_{d_{\mathbf{Y}}}(q_j, 2r_0') \cap \mathcal{R}_{\mathbf{Y}} \subset B_\chi(q_j, r_0)$$ and so 
$$B_{d_{\mathbf{Y}}}(q_j, 2r_0')\cap (U\cap \mathcal{R}_{\mathcal{X}_0})  = \phi . $$
%
%
Since the total volume of $(\mathbf{Y}, d_{\mathbf{Y}})$ is bounded and $d_{\mathbf{Y}}(q_j, p) \rightarrow \infty$, %
$$\epsilon_j = \textnormal{Vol}(B_{d_{\mathbf{Y}}}(q_j, 2r_0'), d_{\mathbf{Y}}) \rightarrow 0. $$

Now we construct the auxiliary smooth function $1\leq F_j \leq A_j$ on $\mathcal{X}_0$ such that 
$$F_j= 1, ~  \textnormal{on} ~ \mathcal{X}_0 \setminus (B_{d_{\mathbf{Y}}}(q_j, 2r_0') \cap \mathcal{R}_{\mathcal{X}_0} ) $$
 and
  $$F_j = A_j, ~\textnormal{on} ~ B_{d_{\mathbf{Y}}}(q_j, r_0')\cap \mathcal{R}_{\mathcal{X}_0}.$$
 Then we calculate the $L^{1+\delta}$-norm of $F_j$ on $\mathcal{X}_0$ for some fixed small $\delta>0$. Let $\omega_0$ be the unique K\"ahler-Einstein current on $\mathcal{X}_0$ with $\omega_0^n = e^{\varphi_0} \Omega_0$. There exists $C_1>0$ such that for all $j$, 
\begin{eqnarray*}
&&\int_{\mathcal{X}_0\setminus U} F_j^{1+\delta} \Omega_0 \\
&\leq&  \int_{\mathcal{X}_0\setminus \left( U \cup (B_{d_\mathbf{Y}}(q_j, 2r_0') \cap \mathcal{R}_{\mathcal{X}_0} )\right)} \Omega_0+ A_j^{1+\delta} e^{\sup_{B_{d_\mathbf{Y}}(q, 2r_0')\cap \mathcal{R}_{\mathcal{X}_0}}|\varphi_0|}  \int_{B_{d_\mathbf{Y}}(q_j, 2r_0')} dV_{d_{\mathbf{Y}}} \\
&\leq& C_1 + C_1A_j^{1+\delta} \epsilon_j
\end{eqnarray*}
We choose
$$A_j  = (\epsilon_j)^{- \frac{1}{1+\delta}} \rightarrow \infty$$ and so  
$$\int_{\mathcal{X}_0 \setminus U} (F_j)^{1+\delta} \Omega \leq 2C_1 $$
for some fixed $\delta>0$ and for all $j$.  

Now we consider the family of complex Monge-Amp\`ere equations on $\mathcal{X}_0$
\begin{equation}\label{dmaq}
(\chi_0 + \ddbar \phi_j) ^n = e^{\phi_j} F_j \Omega_0. 
\end{equation} 
By Lemma \ref{Finfty}, there exists a solution $\phi_j$ solving (\ref{dmaq}). Furthermore,  $\phi_j \in C^\infty(\mathcal{R}_{\mathcal{X}_0})$ and for any $K \subset \subset \mathcal{X}_0 \setminus U$, there exists $C_2>0$ such that for all $j$, 
$$ \sup_{ B_{d_{\mathbf{Y}}}(q_j, r_0')\cap \mathcal{R}_{\mathcal{X}_0}} |\phi_j|\leq C_2. $$ 
From equation (\ref{dmaq}),  $\chi_0+ \ddbar \phi_j$ is K\"ahler-Einstein on $B_{d_{\mathbf{Y}}}(q_j, r_0')\cap\mathcal{R}_{\mathcal{X}_0}$.

We will now prove a local Schwarz lemma. 
%
%
Let $\omega_j = \chi_0 + \ddbar \phi_j$ and let $\omega_0= \chi_0 + \ddbar \varphi_0$ be unique K\"ahler-Einstein current on $\mathcal{X}_0$. Let 
$$r(x)=d_{\mathbf{Y}}(x, p) $$ be the distance function from $x$ to $p$ in $(\mathbf{Y}, d_{\mathbf{Y}})$. By the geodesic convexity result in \cite{CN}, for any $x\in \mathcal{R}_{\mathcal{X}_0}\cap X$, there exists a smooth geodesic $\gamma$ joining $p$ and $x$, and $\gamma$ lies entirely in $\mathcal{R}_{\mathcal{X}_0}$. In particular, $r(x)$ is smooth on $\mathcal{R}_{\mathcal{X}_0}\cap X$ except at cut-locus points on smooth minimal geodesics.

We pick an effective divisor $\mathcal{F}$ similarly as in Lemma \ref{fest} such that $\mathcal{F} $ is numerically equivalent to $K_{\mathcal{X}_0}$ and it contains the singular locus $\mathcal{S}_{\mathcal{X}_0}$ of $\mathcal{X}_0$. We consider the quantity
$$H_{j, \varepsilon}= \log  \left( \psi  \frac{\omega_j^n}{\omega_0^n} \right) - \varepsilon (  \varphi_0 - \gamma \log |\sigma_{\mathcal{F}}|^2_{h_{\Omega_0}}),$$
where $\psi=\phi(r(x))$ is chosen with a smooth cut-off function $\phi$ satisfing
$$ \phi(r) = 1, ~ \textnormal{if} ~ r\leq r'_0, ~ \phi(r) =0, ~ \textnormal{if} ~ r\geq 2r'_0$$
and
$$\phi\geq 0, ~ 0 \leq \phi^{-2+\frac{1}{n} } (\phi')^2 \leq C, ~ \phi^{-1 + \frac{1}{n}} |\phi''| \leq C_3$$
for some fixed constant $C_3>0$. Since $\varphi_0$ is milder than any log pole singularities, $\varphi_0 - \gamma \log |\sigma_\mathcal{D}|^2_{h_\mathcal{D}}$ is uniformly bounded below after fixing $\gamma>0$. Immediately, we can conclude that 
$$\sup_{\mathcal{X}_0} H_{j, \varepsilon} < \infty $$ and $H_{j, \varepsilon}$ tends to $-\infty$ near $G_p$.  Suppose the maximum of $H_{j, \epsilon}$ is achieved at $q'$. Then $q'\in \left(  \mathcal{R}_{\mathcal{X}_0} \setminus Supp(G_p) \right) \cap Supp(\psi)$.   

If $q'$ is not a cut-point of $p$, then $H_{j, \varepsilon}$ is smooth near $q'$ and there exists $C_4>0$ such that
\begin{eqnarray*}
 \Delta_{\omega_0} H_{j, \varepsilon} &\geq&  tr_{\omega_0}(\omega_j) + \varepsilon (1-\gamma) tr_{\omega_0}( \chi_0  ) -n - n \varepsilon - \left(  \psi^{-2} |\nabla \psi|^2_{\omega_0} - \psi^{-1} \Delta_{\omega_0} \psi \right) \\
 &\geq& \left( \frac{ (\omega_j)^n}{ (\omega_0)^n} \right)^{1/n} - C_4 - C_4\psi^{-1/n}  .
 \end{eqnarray*}
Therefore at the maximum point $q'$ of $H_{j, \varepsilon}$, $\psi \left( \frac{ (\omega_j)^n}{ (\omega_0)^n} \right) \leq  (2C_4)^n$ and so there exists $C_5>0$ such that for all $j$ and $0 <\varepsilon <1$, 
$$H_{j, \varepsilon} \leq C_5. $$

 If $q$ is a cut-point of $p$, one can use the following trick of Calabi (c.f. \cite{CY}). Let $\gamma$ be a smooth minimizing geodesic joining $p$ and $q$ with $\gamma(0)=p$, $\gamma(r(q))=q$. Let $p_\delta= \gamma(\delta)$ for sufficiently small $\delta>0$. Obviously $q$ is not a cut-point of $p_\delta$. Let $r_\delta(x)$ be the distance function from $x$ to $p_\epsilon$ and $\psi_\delta(x) = \phi(r_\delta(x)+\delta)$. Since $r_\delta(x) +\delta=r_\delta(x)+r_\delta(p) \geq r_p(x)$, $\psi_\epsilon(x) \leq \psi(x)$ and $\psi_\delta(q)=\psi(q)$. One now can apply the maximum principle to 
 $$H_{j, \epsilon, \delta} = \log  \left( \psi_\delta \frac{(\omega_j)^n}{ (\omega_0)^n} \right) - \epsilon (  \varphi_0 - \gamma \log |\sigma_\mathcal{F}|^2_{h_{\Omega_0}})$$ 
  since the maximum of $H_{\epsilon, \delta}$ is also achieved at $q$.  We obtain the same estimate as in the non cut-locus case.

Finally we let $\epsilon \rightarrow 0$. Then there exists $C_6>0$ such that  for all $j$, we have
$$\sup_{B_{d_\mathbf{Y}}(q_j, r_0')\cap \mathcal{R}_{\mathcal{X}_0}} \frac{ (\omega_j)^n}{ (\omega_0)^n}  \leq C_6. $$
But from the equations for $\varphi_0$ and $\varphi_j$, we have 
$$\sup_{B_{d_\mathbf{Y}}(q_j, r_0')\cap \mathcal{R}_{\mathcal{X}_0}} \left( \frac{ (\omega_j)^n}{ (\omega_0)^n} \right) = \sup_{B_{d_\mathbf{Y}}(q_j, r_0')\cap \mathcal{R}_{\mathcal{X}_0}} F_j e^{\phi_j - \varphi_0}  =A_j \sup_{B_{d_\mathbf{Y}}(q_j, r_0')\cap \mathcal{R}_{\mathcal{X}_0}} e^{\phi_j - \varphi_0} \rightarrow \infty$$
as $\varphi_0$ and $\phi_j$ are uniformly bounded on $B_{d_\mathbf{Y}}(q_j, 1)\cap \mathcal{R}_{\mathcal{X}_0} $. Contradiction. 

\end{proof}

\begin{lemma} \label{infest} For any $q_j \in \mathcal{R}_{\mathcal{X}_0}$ converging to some $q\in \textnormal{LCS} (\mathcal{X}_0)$ in $(\mathcal{X}_0, \chi_0)$, we have
\begin{equation}
\lim_{j\rightarrow \infty} d_{\mathbf{Y}}(p, q_j) = \infty. 
\end{equation}

\end{lemma}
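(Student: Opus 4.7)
I argue by contradiction: suppose, along a subsequence, $d_\mathbf{Y}(p, q_j) \le R$ for some $R > 0$. The plan is to construct an auxiliary K\"ahler current $\omega_\phi$ on $\mathcal{X}_0$ with explicit Poincar\'e-cusp behaviour along $\textnormal{LCS}(\mathcal{X}_0)$, establish a Schwarz-type lower bound $\omega_0 \ge c\,\omega_\phi$ in a neighbourhood of $q$, and then use the infinite $\omega_\phi$-distance to $\textnormal{LCS}(\mathcal{X}_0)$ to contradict the bounded $\omega_0$-distance assumption.

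For the construction, fix an open neighbourhood $U$ of $q$ in $\mathcal{X}_0$ disjoint from the other components, and a weight $f$ on $\mathcal{X}_0$ chosen so that, on a log resolution $\pi\colon Y \to \mathcal{X}_0$ near the exceptional divisor $F$ over $\textnormal{LCS}(\mathcal{X}_0)$, the measure $e^f\Omega_0$ is locally integrable (cancelling the order-two pole of $\pi^*\Omega_0$ along $F$), while $\|e^f\|_{L^\infty(\mathcal{X}_0 \setminus U)}$ remains finite. Lemma \ref{Finfty} then produces $\phi \in L^\infty_{loc}(\mathcal{X}_0\setminus \textnormal{LCS}(\mathcal{X}_0)) \cap C^\infty(\mathcal{R}_{\mathcal{X}_0})$ solving $(\chi_0 + \ddbar \phi)^n = e^{\phi+f}\Omega_0$, and $\omega_\phi = \chi_0 + \ddbar\phi$ inherits an explicit Poincar\'e-cusp profile from $f$ in local coordinates $(w_1, \ldots, w_n)$ where $F = \{w_1 = 0\}$.

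Next, I establish the Schwarz-type inequality $\omega_0 \ge c\,\omega_\phi$ on $U \cap \mathcal{R}_{\mathcal{X}_0}$ by the maximum principle on $Y$, in the spirit of Lemma \ref{sch4}. Concretely, apply the maximum principle to
\[
H = \log\!\bigl(\textnormal{tr}_{\omega_0}(\omega_\phi)\bigr) - A(\varphi_0 - \phi) + \lambda \log|\sigma_F|^2_{h_F} + \mu \log|\sigma_{\mathcal{F}}|^2_{h_{\Omega_0}},
\]
where $\sigma_{\mathcal{F}}$ is a defining section of an effective divisor numerically equivalent to $K_{\mathcal{X}_0}$ containing the singular locus of $\mathcal{X}_0$ (as in Lemma \ref{sch4}), and $\lambda, \mu > 0$ are small. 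The standard differential inequality $\Delta_{\omega_0} H \ge \textnormal{tr}_{\omega_0}(\omega_\phi) - C$, the barrier terms forcing $H \to -\infty$ along $F$ and along $\sigma_{\mathcal{F}}$, and the a priori controls on $\varphi_0$ from Lemma \ref{existsol}(2) and on $\phi$ from Lemma \ref{Finfty}(2) yield $\sup H < \infty$, hence $\omega_0 \ge c\,\omega_\phi$ on $U \cap \mathcal{R}_{\mathcal{X}_0}$. Consequently $d_{\omega_0}(p, q_j) \ge c^{1/2}\,d_{\omega_\phi}(p, q_j)$, and since $\omega_\phi$ has infinite distance to $F$, $d_{\omega_\phi}(p, q_j) \to \infty$ as $q_j \to q$, contradicting $d_\mathbf{Y}(p, q_j) \le R$.

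The main obstacle is the Schwarz-lemma step: $\omega_0$ and $\omega_\phi$ degenerate along $F$ in similar cuspidal fashion, so the trace estimate must be run with barrier weights carefully fine-tuned to dominate the singularities of $\varphi_0 - \phi$. This is made possible by the sharp bound that $\varphi_0$ is milder than any logarithmic pole along $\textnormal{LCS}(\mathcal{X}_0)$ (Lemma \ref{existsol}(2)) together with the analogous bound for $\phi$ from Lemma \ref{Finfty}(2); the precise choice of $f$ must also ensure that the $\omega_\phi$-distance to $F$ is indeed infinite, which boils down to arranging the asymptotic profile of $\phi$ along $F$ to match the Poincar\'e cusp in local coordinates.
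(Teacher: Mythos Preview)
Your plan has a genuine gap at the very first step. Lemma \ref{Finfty} is stated for an open set $U \subset\subset X\setminus \textnormal{LCS}(X)$, and the hypothesis $\|e^f\|_{L^\infty(X\setminus U)}\le K_1$ forces $e^f$ to be \emph{bounded near} $\textnormal{LCS}(X)$. You want to take $U$ a neighbourhood of $q\in\textnormal{LCS}(\mathcal{X}_0)$ and choose $f$ so that $e^f\Omega_0$ becomes integrable along $F$; this is exactly what the lemma does not allow. With any admissible $f$, the right-hand side $e^{\phi+f}\Omega_0$ has the same non-integrable behaviour along $F$ as $e^{\varphi_0}\Omega_0$, so the resulting $\omega_\phi$ has no explicit Poincar\'e-cusp profile --- it has the same unspecified asymptotics near $\textnormal{LCS}$ as $\omega_0$ itself. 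Consequently the claimed ``infinite $\omega_\phi$-distance to $F$'' is not available, and the argument collapses.

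Even setting aside the construction of $\omega_\phi$, the Schwarz step is circular. To run the maximum principle for $H$ you need $H\to -\infty$ along $F$, which requires a priori control on $\log\,\textnormal{tr}_{\omega_0}(\omega_\phi)$ near $F$; but any such control already encodes the cusp-type lower bound for $\omega_0$ that you are trying to prove. The small barrier $\lambda\log|\sigma_F|^2$ cannot dominate a term that may blow up at a comparable logarithmic rate, and the difference $\varphi_0-\phi$ is only controlled up to $\epsilon\log|\sigma_{G_p}|^2$ on each side, which is not enough to force $H$ to $-\infty$.

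The paper proceeds along an entirely different route: assuming $d_{\mathbf{Y}}(p,q_j)\le D$, pass to the limit $q'\in\mathbf{Y}$ and invoke the partial $C^0$-estimate (Corollary \ref{par1}) to produce a peak section $\sigma\in H^0(\mathbf{Y},mK_{\mathbf{Y}})$ with $|\sigma|^2_{h_{\mathbf{Y}}}$ bounded above and below near $q'$. Using normality of $\mathbf{Y}$ near $q'$, the ambient sections $\eta_i$ extend holomorphically there, and comparing the two hermitian metrics yields $\varphi_0\ge -C$ near $q'$; this contradicts $\varphi_0(q_j)\to -\infty$ (from Lemma \ref{existsol}(4)). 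The argument uses the Riemannian structure of the Gromov--Hausdorff limit rather than any direct metric comparison on $\mathcal{X}_0$.
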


\begin{proof}  We prove by contradiction. Suppose there exist a sequence of points $q_j \in \mathcal{R}_{\mathcal{X}_0} \rightarrow q \in \textnormal{LCS} (\mathcal{X}_0)$ with respect to $\chi_0$ and $d_{\mathbf{Y}}(q_j, p) \leq D$ for some fixed $D>0$. We let $q' = \lim_{j \rightarrow \infty} q_j$ with respect to $d_{\mathbf{Y}}$. Then using the partial $C^0$-estimate in Corollary \ref{par1}, there exists a global $L^2$-section $\sigma$ on $\mathbf{Y}$ such that $\sigma$ is a global section of $mK_{\mathbf{Y}}$ for some $m$ and $\sigma$ does not vanish near $q'$. In particular, $|\sigma|^2_{h_\mathbf{Y}}$ is bounded above and below away from $0$ near $q'$ in $\mathbf{Y}$, where $h_\mathbf{Y}$ is the hermitian metric on $K_\mathbf{Y}$ induced from the K\"ahler-Einstein volume form on $\mathcal{R}_\mathbf{Y}$.  Hence we can write $h_\mathbf{Y} = e^{-\psi}$ for some bounded plurisubharmonic function near $q'$ because $\mathbf{Y}$ is an analytic normal space and $-\ddbar \log |\sigma|^2_{h_\mathbf{Y}}= \ddbar \psi$ is the K\"ahler-Einstein metric.

On the other hand, $\Omega_0 = \left( \sum_{i=0}^N \eta_i \wedge \overline{ \eta_i} \right)^{1/m}$ and the K\"ahler-Einstein volume form is given by $e^{\varphi_0} \Omega_0$.  Each $\eta_i$ is a holomorphic section of $mK_{\mathbf{Y}}$ near $q'$ on the regular part of $\mathbf{Y}$. Since $\mathbf{Y}$ is normal near $q'$ by the result of \cite{DS2} as a local version of \cite{DS1}, $\eta_i$ extends to a holomorphic section over $q'$ and therefore near $q'$. Compare the two volume measures, there exists $C>0$ such that near $q'$ in $(\mathbf{Y}, d_{\mathbf{Y}})$, 
$$\varphi_0= \psi + \frac{1}{m} \log \left( \frac{|\sigma|^2}{\sum_{i=0}^N (\eta_i \wedge \overline{\eta_i} )} \right) \geq - C.$$
On the other hand, $\varphi_0(q_j) \rightarrow - \infty.$
Contradiction. 


\end{proof}

\begin{corollary} \label{podis} For any $p \in \mathcal{R}_{\mathbf{Y}}$ and    any $R\geq 1$, there exists $C= C(p , R)>0$ such that  
\begin{equation}
\sup_{B_{d_\mathbf{Y}}(p , R) \cap \mathcal{R}_{\mathcal{X}_0} } |\varphi_0 | \leq C. 
\end{equation}

\end{corollary}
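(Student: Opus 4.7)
The upper bound side is immediate: since $\omega_t$ satisfies $(\chi_t + \ddbar\varphi_t)^n = e^{\varphi_t}\Omega_t$, the bound $\sup_{\mathcal{X}_t}\varphi_t \leq C$ from Corollary \ref{5upb} passes to the smooth limit on $\mathcal{R}_{\mathcal{X}_0}$ and gives $\sup_{\mathcal{R}_{\mathcal{X}_0}}\varphi_0 \leq C$. So only the lower bound on $B_{d_\mathbf{Y}}(p,R)\cap\mathcal{R}_{\mathcal{X}_0}$ needs work, and this is where Lemma \ref{fest} and Lemma \ref{infest} are designed to be used.

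The plan is to argue by contradiction. Suppose there is a sequence $q_j \in B_{d_\mathbf{Y}}(p,R)\cap\mathcal{R}_{\mathcal{X}_0}$ with $\varphi_0(q_j) \to -\infty$. Since $\mathcal{X}_0$ is compact in the $\chi_0$-topology, after passing to a subsequence we can assume $q_j \to q^\ast$ in $(\mathcal{X}_0,\chi_0)$ for some $q^\ast \in \mathcal{X}_0$. I will split into two cases depending on whether $q^\ast$ lies in $\textnormal{LCS}(\mathcal{X}_0)$.

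If $q^\ast \in \mathcal{X}_0 \setminus \textnormal{LCS}(\mathcal{X}_0)$, then Theorem \ref{main1}(2) (equivalently the $L^\infty_{loc}$-conclusion of Lemma \ref{uniq}) gives that $\varphi_0$ is bounded in a $\chi_0$-neighborhood of $q^\ast$, contradicting $\varphi_0(q_j) \to -\infty$. If instead $q^\ast \in \textnormal{LCS}(\mathcal{X}_0)$, then Lemma \ref{infest} gives $d_\mathbf{Y}(p',q_j) \to \infty$ for the fixed base point $p'$ there. By Lemma \ref{regularid} we have $\mathcal{R}_\mathbf{Y} = \mathcal{R}_{\mathcal{X}_0}$, and since $q_j \in B_{d_\mathbf{Y}}(p,R)$ all $q_j$ and $p$ lie in a single component of $\mathbf{Y}$; if $p'$ happens to sit in a different component we replace it by any point in the same component as $p$ (Lemma \ref{infest} applies equally well to any fixed base point inside the given component, since distances inside a component are finite by Lemma \ref{fest}). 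The triangle inequality $d_\mathbf{Y}(p,q_j) \geq d_\mathbf{Y}(p',q_j) - d_\mathbf{Y}(p,p')$ then forces $d_\mathbf{Y}(p,q_j) \to \infty$, which contradicts $q_j \in B_{d_\mathbf{Y}}(p,R)$.

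Both cases yield contradictions, so $\varphi_0$ is bounded below on $B_{d_\mathbf{Y}}(p,R)\cap\mathcal{R}_{\mathcal{X}_0}$, and combined with the upper bound this proves the corollary. The genuine content is packaged into the two lemmas: Lemma \ref{fest} ensures that bounded $\chi_0$-distance to $\textnormal{LCS}(\mathcal{X}_0)$ away from $0$ gives bounded $d_\mathbf{Y}$-distance, and Lemma \ref{infest} ensures the converse near the non-log-terminal locus. Neither of these is expected to cause difficulty here; the only mild care required is the bookkeeping in the second case when $p$ differs from the base point used in Lemma \ref{infest}, handled by the triangle inequality within a single metric component of $\mathbf{Y}$.
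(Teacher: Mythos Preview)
Your proof is correct and follows essentially the same approach as the paper's: argue by contradiction, pass to a $\chi_0$-convergent subsequence, and use the dichotomy that either the limit is away from $\textnormal{LCS}(\mathcal{X}_0)$ (ruled out by local boundedness of $\varphi_0$) or in $\textnormal{LCS}(\mathcal{X}_0)$ (ruled out by Lemma \ref{infest}). The paper's version is terser --- it folds your first case into the phrase ``$q_j$ converges to $q'\in\textnormal{LCS}(\mathcal{X}_0)$'' without making the case split explicit, and it does not separately discuss the upper bound or the base-point bookkeeping --- but the logic is the same.
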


\begin{proof}   We prove by contradiction. Suppose there exist a sequence of points $q_j \in B_{d_\mathbf{Y} }(p, R) \cap \mathcal{R}_{\mathcal{X}_0} \subset \mathbf{Y} $ such that $\varphi_0(q_j) \rightarrow -\infty$ as $j \rightarrow \infty$. Then after passing to a subsequence, we can assume that $q_j$ converges to $q' \in \textnormal{LCS}(\mathcal{X}_0)$ in $(\mathcal{X}_0, \chi_0)$ and  $q_j$ converges to some $q\in B_{d_\mathbf{Y}}(p_0, 2R)$ in $(\mathbf{Y}, d_\mathbf{Y})$. This leads to contradiction because $d_\mathbf{Y}(p, q_j) \rightarrow \infty$ from Lemma \ref{infest}. 

\end{proof}

In conclusion, in each component of $(\mathbf{Y}, d_\mathbf{Y})$, the local boundedness of the K\"ahler-Einstein potential is equivalent to the boundedness of distance in a uniform way.


\section{Proof of Theorem \ref{main2} }

In this section, we will complete the proof of Theorem \ref{main2}.  We first derive a geometric Schwarz lemma.

\begin{lemma} \label{sch7} For any $p \in \mathcal{R}_{\mathcal{X}_0}$ and  any $R\geq 1$, there exists $C= C( R)>0$ such that   
\begin{equation}\label{compsch1}
\omega_0 \geq C ~ \chi_0, 
\end{equation}
on $B_{d_\mathbf{Y}}(p , R)\cap R_{\mathbf{Y}}$.

\end{lemma}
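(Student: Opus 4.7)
The plan is to run a Yau-type Schwarz lemma via the maximum principle for $tr_{\omega_0}(\chi_0)$, using a Calabi cutoff based on the distance function $r(x) = d_{\mathbf{Y}}(p, x)$ to localize to the ball $B_{d_{\mathbf{Y}}}(p, 2R)$, together with an auxiliary barrier coming from an effective $\mathbb{Q}$-divisor supported along the singular locus $\mathcal{S}_{\mathcal{X}_0}$ (so that the supremum of the test function is attained in $\mathcal{R}_{\mathcal{X}_0}$). The new ingredient compared with Lemma \ref{sch1} is that the $L^\infty$ lower bound for $\varphi_0$ is no longer available on fixed compact sets in $\mathcal{X}_0$ away from the singular set, but rather on distance balls in $(\mathbf{Y}, d_{\mathbf{Y}})$, exactly as provided by Corollary \ref{podis}.

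Concretely, I would fix an effective $\mathbb{Q}$-divisor $\mathcal{F}$ numerically equivalent to $K_{\mathcal{X}_0}$ whose support contains $\mathcal{S}_{\mathcal{X}_0}$ (as in the proof of Lemma \ref{sch4}), with defining section $\sigma_{\mathcal{F}}$ and hermitian metric $h_{\Omega_0}$, and a Calabi-type cutoff $\phi$ with $\phi(r) = 1$ for $r \leq R$, $\phi(r) = 0$ for $r \geq 2R$, $0 \leq \phi^{-2+1/n}(\phi')^2 \leq C$, and $\phi^{-1+1/n}|\phi''| \leq C$. Setting $\psi(x) = \phi(r(x))$ I would consider
$$H_{\varepsilon} = \log\bigl(\psi \cdot tr_{\omega_0}(\chi_0)\bigr) - A \varphi_0 + \varepsilon \log |\sigma_{\mathcal{F}}|^2_{h_{\Omega_0}},$$
for $A>0$ large and $\varepsilon > 0$ small. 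Corollary \ref{5upb} bounds $\varphi_0$ from above globally, Corollary \ref{podis} bounds $\varphi_0$ from below on $B_{d_{\mathbf{Y}}}(p, 2R) \cap \mathcal{R}_{\mathcal{X}_0}$, and $tr_{\omega_0}(\chi_0)$ is smooth on $\mathcal{R}_{\mathcal{X}_0}$, so $H_{\varepsilon}$ tends to $-\infty$ near $\mathrm{supp}(\mathcal{F})$ and near $\{r = 2R\}$. Therefore the supremum of $H_{\varepsilon}$ is attained at some point $q \in \bigl(\mathcal{R}_{\mathcal{X}_0} \setminus \mathrm{supp}(\mathcal{F})\bigr) \cap B_{d_{\mathbf{Y}}}(p, 2R)$.

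Since $\chi_0$ is the restriction of a Fubini-Study metric, its bisectional curvature is uniformly bounded above, so the standard Chern-Lu computation yields $\Delta_{\omega_0} \log tr_{\omega_0}(\chi_0) \geq -C_0\, tr_{\omega_0}(\chi_0) + O(1)$ using $Ric(\omega_0) = -\omega_0$. Combined with $\Delta_{\omega_0}(-A\varphi_0) = A\, tr_{\omega_0}(\chi_0) - An$, with the Calabi bounds contributing only an error of order $\psi^{-1/n}(1 + tr_{\omega_0}(\chi_0))$, and with a small $\varepsilon$-perturbation from the $\mathcal{F}$-term, choosing $A$ sufficiently large gives $\Delta_{\omega_0} H_{\varepsilon} \geq \tfrac{A}{2}\, tr_{\omega_0}(\chi_0) - C(A, R)$ away from the cut locus. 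The maximum principle applied at $q$, using Calabi's cut-locus trick exactly as in the proof of Lemma \ref{fest} (slide the basepoint a distance $\delta$ along an interior minimizing geodesic), gives $\psi(q)\, tr_{\omega_0}(\chi_0)(q) \leq C_1(R)$. Unwinding the definition of $H_{\varepsilon}$, applying the $L^\infty$-bound for $\varphi_0$, and letting $\varepsilon \to 0$ yields $\psi \cdot tr_{\omega_0}(\chi_0) \leq C_2(R)$ on $B_{d_{\mathbf{Y}}}(p, 2R) \cap \mathcal{R}_{\mathcal{X}_0}$; since $\psi \equiv 1$ on $B_{d_{\mathbf{Y}}}(p, R)$, and since $\mathcal{R}_{\mathbf{Y}} = \mathcal{R}_{\mathcal{X}_0}$ by Lemma \ref{regularid}, the bound $tr_{\omega_0}(\chi_0) \leq C_2(R)$ holds on $B_{d_{\mathbf{Y}}}(p, R) \cap \mathcal{R}_{\mathbf{Y}}$. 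Each eigenvalue $\mu_i$ of $\omega_0$ with respect to $\chi_0$ then satisfies $1/\mu_i \leq tr_{\omega_0}(\chi_0) \leq C_2(R)$, which gives $\omega_0 \geq C_2(R)^{-1} \chi_0$, as desired.

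The main obstacle is handling the cut locus of $r(\cdot) = d_{\mathbf{Y}}(p, \cdot)$ on the singular metric space $\mathbf{Y}$: although the geodesic convexity of $\mathcal{R}_{\mathbf{Y}} = \mathcal{R}_{\mathcal{X}_0}$ from \cite{CN} guarantees that a minimizing geodesic from $p$ to the maximum point of $H_{\varepsilon}$ lies entirely in the smooth locus, the distance function itself need not be smooth at the max point, and the Calabi sliding-basepoint argument must be invoked. A secondary technical point is to verify that the $O(\psi^{-1/n})$ error terms from the cutoff are absorbed cleanly when $\psi$ is built from a distance function on a singular limit space rather than on a smooth manifold; this is handled by the same choice of $\phi$ used in Lemma \ref{fest}, and no new input is required beyond that.
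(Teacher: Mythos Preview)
Your proposal is correct and follows essentially the same approach as the paper: the same test function $H_\varepsilon = \log(\psi\, tr_{\omega_0}(\chi_0)) - A\varphi_0 + \varepsilon\log|\sigma_{\mathcal{F}}|^2_{h_{\Omega_0}}$, the same use of Corollary~\ref{podis} for the $L^\infty$ bound on $\varphi_0$ over the distance ball, and the same use of Lemma~\ref{sch1} to push the maximum off $\mathcal{F}$. The only cosmetic differences are that the paper takes the simpler cutoff bounds $\phi^{-1}(\phi')^2 \le CR^{-2}$, $|\phi''|\le CR^{-2}$ (yielding an error of order $\psi^{-1}$ rather than $\psi^{-1/n}$) and does not re-state Calabi's cut-locus trick, having already carried it out in Lemma~\ref{fest}; your explicit mention of it is a welcome bit of extra care.
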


\begin{proof} We let $r(x)$ be the distance function from $x\in \mathbf{Y}$ to $p $ in $(\mathbf{Y}, d_\mathbf{Y})$. 
We choose the smooth cut-off function $\psi=\phi(r(x))$ satisfying
$$ \phi(r) = 1, ~ if ~ r\leq R, ~ \phi(r) =0, ~ if ~ r\geq 2R.$$
Furthermore, we can assume that
$$\phi\geq 0, ~ 0 \leq \phi^{-1} (\phi')^2 \leq C R^{-2}, ~ |\phi''| \leq C R^{-2}. $$
%
%

We consider the quantity $ tr_{g_0}(\chi_0)$ on $\mathcal{R}_\mathbf{Y}=\mathcal{R}_{\mathcal{X}_0}$.
Then straightforward calculations show that there exists $K>0$ such that on $\mathcal{R}_{\mathcal{X}_0}$, we have 
$$ \Delta_{\omega_0}  \log tr_{\omega_0}(\chi_0) \geq -K (tr_{\omega_0} (\chi_0)) - K. $$
Now we choose the same effective divisor $\mathcal{F}$ as in the proof of Lemma \ref{fest} such that $\mathcal{F}$ is numerically equivalent to $K_{\mathcal{X}_0}$ and it contains the singular locus $\mathcal{S}_{\mathcal{X}_0}$ of $\mathcal{X}_0$. Let $\sigma_{\mathcal{F}}$ be the defining function of $\mathcal{F}$. Then we consider the following quantity
$$H_\epsilon =   \log  \left( \psi tr_{\omega_0}(\chi_0)   \right) -  A \varphi_0   +\epsilon \log |\sigma_\mathcal{F}|^2_{h_{\Omega_0}}.$$
There exists $C_1 >0$ such that  
\begin{eqnarray*}
&& \Delta_{\omega_0} H_\epsilon \\
&=& \Delta_{\omega_0}  ( \log tr_{\omega_0} (\chi_0) -  A\varphi_0) -\epsilon tr_{\omega_0}(\chi_0)- \left(  \psi^{-2} |\nabla \psi|^2 - \psi^{-1} \Delta_{g_t} \psi \right) \\
&\geq& tr_{\omega_0}(\chi_0)   - An  - C_1 \psi^{-1}, 
\end{eqnarray*}
by  fixing a sufficiently large $A$. $H_\epsilon$ is smooth on $\mathcal{R}_{\mathcal{X}_0} \setminus \mathcal{F}$ and by Lemma \ref{sch1},  $H_\epsilon$ tends to $-\infty$ near $\mathcal{F}$ for any fixed $\epsilon>0$. If $H_\epsilon$ achieves its positive maximum at $z_0$ in the interior of $B_{d_{\mathbf{Y}}}(p, 2R)$, then there exists $C_2>0$ such that
$$H_\epsilon(z_0) \leq    C_2 \sup_{B_{\mathbf{Y}}(p, R) \cap (\mathcal{R}_{\mathcal{X}_0}\setminus \mathcal{F})} |\varphi_0| $$
Since $ \sup_{B_{\mathbf{Y}}(p, R) \cap (\mathcal{R}_{\mathcal{X}_0}\setminus \mathcal{F})} |\varphi_0|  $ is uniformly bounded by Corollary \ref{podis}, there exists $C_3 >0$ such that 
$$  \sup_{B_{\mathbf{Y}}(p, R) \cap (\mathcal{R}_{\mathcal{X}_0}\setminus \mathcal{F})} tr_{\omega_0}(\chi_0)  \leq  C_3 . $$
This proves the estimate (\ref{compsch1}).

\end{proof}

\begin{corollary} There exists a unique map $$\Phi: (\mathbf{Y}, d_{\mathbf{Y}}) \rightarrow (\mathcal{X}_0, \chi)$$ extending the identity map from $\mathcal{R}_\mathbf{Y}$ and $\mathcal{R}_{\mathcal{X}_0}$. Furthermore, $\Phi$  is a Lipschitz map and  $$\Phi(\mathbf{Y}) = \mathcal{X}_0\setminus \textnormal{LCS} (\mathcal{X}_0) .$$ 

\end{corollary}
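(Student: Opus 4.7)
\emph{Plan.} I will construct $\Phi$ by extending the identity map $\mathcal{R}_{\mathbf{Y}} = \mathcal{R}_{\mathcal{X}_0}$ (from Lemma \ref{regularid}) to all of $\mathbf{Y}$ using the Schwarz estimate from Lemma \ref{sch7}. The key observation is that the pointwise inequality $\omega_0 \geq C(R)\chi_0$ on $B_{d_{\mathbf{Y}}}(p, R) \cap \mathcal{R}_{\mathbf{Y}}$ integrates to a Lipschitz comparison of distances: any smooth curve in the regular part has $\chi_0$-length bounded by $C(R)^{-1/2}$ times its $g_{KE}$-length. Combined with the geodesic convexity of $\mathcal{R}_{\mathbf{Y}}$ in $\mathbf{Y}$ (Cheeger-Naber, already invoked in the proof of Lemma \ref{fest}), two points of $\mathcal{R}_{\mathbf{Y}}$ lying in $B_{d_{\mathbf{Y}}}(p,R)$ can be joined by a minimizing $d_{\mathbf{Y}}$-geodesic inside $\mathcal{R}_{\mathbf{Y}}$, which converts the Schwarz bound into $d_{\chi_0}(y_1, y_2) \leq C(R)^{-1/2} d_{\mathbf{Y}}(y_1, y_2)$.

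Concretely, fix a component $Y_\alpha$ and a reference point $p_\alpha \in \mathcal{R}_{Y_\alpha}$. For $y \in Y_\alpha$, pick $\{y_j\} \subset \mathcal{R}_{Y_\alpha}$ with $y_j \to y$ in $d_{\mathbf{Y}}$. The sequence lies in some ball $B_{d_{\mathbf{Y}}}(p_\alpha, R)$ and is $d_{\mathbf{Y}}$-Cauchy, so by the distance comparison above it is also $\chi_0$-Cauchy; since $(\mathcal{X}_0, \chi_0)$ is compact, $y_j$ converges in $\chi_0$ to a point $\Phi(y) \in \mathcal{X}_0$. The same Lipschitz bound shows $\Phi(y)$ is independent of the choice of sequence, so $\Phi$ is well-defined and inherits the local Lipschitz constant $C(R)^{-1/2}$ on each bounded ball. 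Lemma \ref{infest} forces $\Phi(y) \notin \textnormal{LCS}(\mathcal{X}_0)$, since otherwise $d_{\mathbf{Y}}(p_\alpha, y_j) \to \infty$ would contradict the boundedness of the sequence.

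It remains to verify $\Phi(\mathbf{Y}) = \mathcal{X}_0 \setminus \textnormal{LCS}(\mathcal{X}_0)$. The inclusion $\Phi(\mathbf{Y}) \subseteq \mathcal{X}_0 \setminus \textnormal{LCS}(\mathcal{X}_0)$ is exactly what was just established. For the reverse, given $x \in \mathcal{X}_0 \setminus \textnormal{LCS}(\mathcal{X}_0)$, if $x$ is a regular point then $\Phi(x) = x$. If $x \in \mathcal{S}_{\mathcal{X}_0} \setminus \textnormal{LCS}(\mathcal{X}_0)$, select a compact neighborhood $K \subset\subset \mathcal{X}_0 \setminus \textnormal{LCS}(\mathcal{X}_0)$ containing $x$ and a sequence $\{x_j\} \subset \mathcal{R}_{\mathcal{X}_0} \cap K$ converging to $x$ in $\chi_0$. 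Lemma \ref{fest} gives a uniform bound $d_{\mathbf{Y}}(p_\alpha, x_j) \leq C_K$ (for the appropriate component index $\alpha$ determined by $x$), so after passing to a subsequence $x_j$ converges in pointed Gromov-Hausdorff topology to some $y \in Y_\alpha$. By construction $\Phi(y) = x$. Uniqueness of the extension is immediate from continuity and density of $\mathcal{R}_{\mathbf{Y}}$ in $\mathbf{Y}$.

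The main difficulty lies in the very first step: converting the pointwise Kähler form inequality of Lemma \ref{sch7} into a bona fide distance comparison that applies to arbitrary $d_{\mathbf{Y}}$-Cauchy sequences in $\mathcal{R}_{\mathbf{Y}}$. The Kähler form inequality only bounds lengths of curves avoiding the singular set $\mathcal{S}_{\mathbf{Y}}$, so without the Cheeger-Naber geodesic convexity of $\mathcal{R}_{\mathbf{Y}}$ the argument would fail near singular points of $\mathbf{Y}$ that project to $\mathcal{S}_{\mathcal{X}_0} \setminus \textnormal{LCS}(\mathcal{X}_0)$. Everything else — well-definedness, continuity, Lipschitz regularity on bounded subsets, and the surjectivity statement — follows by routine continuity and compactness arguments once the distance comparison is in place.
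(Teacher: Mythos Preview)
Your proposal is correct and follows essentially the same approach as the paper: use Lemma \ref{sch7} together with geodesic convexity of $\mathcal{R}_{\mathbf{Y}}$ to turn the pointwise Schwarz inequality into a local Lipschitz distance comparison, extend the identity map via Cauchy sequences, and invoke Lemmas \ref{fest} and \ref{infest} to pin down the image as $\mathcal{X}_0 \setminus \textnormal{LCS}(\mathcal{X}_0)$. The paper's proof is terser but relies on the same ingredients in the same order; your version simply spells out the surjectivity argument and the role of geodesic convexity more explicitly.
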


\begin{proof} For any point $p$ in $\mathcal{R}_{\mathbf{Y}}$ and  for any $R>0$, by Lemma \ref{sch7}, there exists $C_R>0$ such that  
$$\omega_0 \geq C_R~ \chi_0$$
on $B_{d_\mathbf{Y}}(p, R )\cap \mathcal{R}_{\mathcal{X}_0}$.  The geodesic convexity of $\mathcal{R}_{\mathcal{X}_0}$ in $\mathbf{Y}$  implies that the identity map uniquely extends from $\mathcal{R}_{\mathbf{Y}}$ to $\mathbf{Y}$ and $\Phi$ is locally Lipschitz.  Therefore any Cauchy sequence of points in $(\mathbf{Y}, d_\mathbf{Y}) $ must also be a Cauchy sequence in $(\mathcal{X}_0, \chi_0)$. The last statement of the corollary follows from  the distance estimate in  Lemma \ref{fest} and Lemma \ref{infest}.

\end{proof}

%

\begin{lemma}

$\Phi$ is injective. 

\end{lemma}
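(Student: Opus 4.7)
My plan is to argue by contradiction, assuming $p_1 \neq p_2 \in \mathbf{Y}$ with $\Phi(p_1) = \Phi(p_2) = q \in \mathcal{X}_0 \setminus \textnormal{LCS}(\mathcal{X}_0)$, and to split into two cases depending on whether $q \in \mathcal{R}_{\mathcal{X}_0}$ or $q \in \mathcal{S}_{\mathcal{X}_0} \setminus \textnormal{LCS}(\mathcal{X}_0)$. The first case is handled by a direct comparison of topologies near a smooth point; the second, more substantial case is attacked by pluricanonical separation of points on $\mathbf{Y}$ combined with the extension result Lemma \ref{ext1}.

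In the case $q \in \mathcal{R}_{\mathcal{X}_0}$, the identification $\mathcal{R}_\mathbf{Y} = \mathcal{R}_{\mathcal{X}_0}$ from Lemma \ref{regularid}, combined with the geodesic convexity of $\mathcal{R}_\mathbf{Y}$ in $(\mathbf{Y}, d_\mathbf{Y})$ due to \cite{CN}, implies that the restriction of $d_\mathbf{Y}$ to $\mathcal{R}_\mathbf{Y}$ equals the intrinsic distance of the K\"ahler-Einstein metric $g_{KE}$. Near the smooth point $q$, $g_{KE}$ is smooth and locally comparable to $\chi_0$. Therefore any sequence $p^{(j)} \in \mathcal{R}_\mathbf{Y}$ with $p^{(j)} \to p_i$ in $d_\mathbf{Y}$ satisfies $p^{(j)} = \Phi(p^{(j)}) \to q$ in $\chi_0$, hence in $g_{KE}$ locally, hence in $d_\mathbf{Y}$. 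Uniqueness of limits forces $p_i = q$, contradicting $p_1 \neq p_2$.

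In the case $q \in \mathcal{S}_{\mathcal{X}_0} \setminus \textnormal{LCS}(\mathcal{X}_0)$, I invoke the partial $C^0$-estimate of Corollary \ref{par1} together with the peak section separation of distinct points on $\mathbf{Y}$ mentioned in the paper immediately after that corollary to obtain, for some large $m \in \mathbb{Z}^+$, sections $\sigma, \tau \in H^0(\mathbf{Y}, mK_\mathbf{Y})$ with $\sigma(p_1) = 0$, $\sigma(p_2) \neq 0$, and $\tau(p_1), \tau(p_2) \neq 0$. By Lemma \ref{ext1}, $\sigma$ and $\tau$ extend to global sections $\sigma', \tau' \in H^0(\mathcal{X}_0, mK_{\mathcal{X}_0})$ that coincide with $\sigma, \tau$ on $\mathcal{R} = \mathcal{R}_\mathbf{Y} = \mathcal{R}_{\mathcal{X}_0}$. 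Granting the key claim $\tau'(q) \neq 0$, the projective ratio $[\sigma':\tau']: \mathcal{X}_0 \setminus \{\tau'=0\} \to \mathbb{CP}^1$ is continuous at $q$, while the ratio $[\sigma:\tau]: \mathbf{Y} \to \mathbb{CP}^1$ is continuous at $p_1$ and $p_2$ with distinct values $[0:1]$ and $[\sigma(p_2):\tau(p_2)]$. Taking sequences $p^{(j)}_i \in \mathcal{R}$ with $p^{(j)}_i \to p_i$ in $d_\mathbf{Y}$, continuity of $\Phi$ gives $p^{(j)}_i \to q$ in $\chi_0$, and passing to limits along the common sequence of values in both topologies forces $[\sigma(p_1):\tau(p_1)] = [\sigma'(q):\tau'(q)] = [\sigma(p_2):\tau(p_2)]$, the required contradiction.

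The main obstacle is thus the claim $\tau'(q) \neq 0$. I plan to establish it by comparing hermitian norms across the two topologies on $\mathcal{R}$: the continuous function $|\tau|^2_{h_\mathbf{Y}^m}$ on $\mathbf{Y}$ is strictly positive at $p_1$, and on $\mathcal{R}$ it equals $|\tau'|^2 (e^{\varphi_0} \Omega_0)^{-m}$ because the hermitian metrics $h_\mathbf{Y} = (\omega_{KE}^n)^{-1}$ and $h_{\mathcal{X}_0} = (e^{\varphi_0}\Omega_0)^{-1}$ agree there. Since $\varphi_0 \in L^\infty_{loc}(\mathcal{X}_0 \setminus \textnormal{LCS})$ by Theorem \ref{main1} and $\Omega_0$ is positive and locally bounded near $q$ (as the pluricanonical embedding of $\mathcal{X}_0$ has no common zeros), the factor $(e^{\varphi_0} \Omega_0)^m$ is bounded above near $q$; hence $|\tau'(p^{(j)})|^2$ stays bounded below along $p^{(j)} \to p_1$ in $d_\mathbf{Y}$ (equivalently $\to q$ in $\chi_0$), and continuity of the holomorphic section $\tau'$ on $\mathcal{X}_0$ yields $\tau'(q) \neq 0$. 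The technically delicate part of the argument is precisely this norm comparison, which relies crucially on the local boundedness of $\varphi_0$ away from $\textnormal{LCS}(\mathcal{X}_0)$ from Theorem \ref{main1}, and on carefully bridging the $d_\mathbf{Y}$-topology with the $\chi_0$-topology along the open dense subset $\mathcal{R}$.
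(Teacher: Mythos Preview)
Your approach is essentially the paper's: produce pluricanonical sections on $\mathbf{Y}$ separating the two points via the partial $C^0$-estimate, extend them to $\mathcal{X}_0$ by Lemma \ref{ext1}, and compare the resulting ratio at the image point. The paper does this directly, without a contradiction setup and without your case split: it picks $\sigma_1,\sigma_2$ with $|\sigma_1(q_1)|^2_{h_\mathbf{Y}^m}\geq 2|\sigma_2(q_1)|^2_{h_\mathbf{Y}^m}\neq 0$ and the symmetric inequality at $q_2$, so that $f=\sigma_2/\sigma_1$ satisfies $f(q_1)\leq 1/2$ and $f(q_2)\geq 2$; since $f=f'\circ\Phi$ with $f'=\sigma'_2/\sigma'_1$ on $\mathcal{X}_0$, one gets $\Phi(q_1)\neq\Phi(q_2)$. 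Your Case~1 is therefore redundant, being subsumed in the general argument.

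One slip in your norm comparison: from $|\tau|^2_{h_\mathbf{Y}^m}=|\tau'|^2_{h_{\Omega_0}^m}\,e^{-m\varphi_0}$ on $\mathcal{R}$ you need $e^{m\varphi_0}$ bounded \emph{below} near $q$ (equivalently $\varphi_0$ bounded below) to force $|\tau'|^2_{h_{\Omega_0}^m}$ bounded below, not ``$(e^{\varphi_0}\Omega_0)^m$ bounded above'' as you wrote. This is harmless since $\varphi_0\in L^\infty_{loc}(\mathcal{X}_0\setminus\textnormal{LCS}(\mathcal{X}_0))$ gives the two-sided bound. In fact your explicit verification of $\tau'(q)\neq 0$ is a point the paper's proof glosses over (it simply asserts $f'$ is holomorphic near $\Phi(q_i)$), so your treatment is if anything more careful on this step.
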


\begin{proof} For any two distinct points $q_1, q_2 \in \mathbf{Y}$, the partial $C^0$-estimates in Corollary \ref{par1} allow us to construct two  holomophic sections of $mK_{\mathbf{Y}}$ on $\mathbf{Y}$ for some sufficiently large $m$ such that 
$$|\sigma_1(q_1)|^2_{(h_\mathbf{Y})^m} \geq  2|\sigma_2|^2_{(h_\mathbf{Y})^m}(q_1)\neq 0, ~~ |\sigma_2(q_2)|^2_{(h_\mathbf{Y})^m} \geq  2|\sigma_1|^2_{(h_\mathbf{Y})^m}(q_2) \neq 0$$
and $$\int_{\mathbf{Y}} |\sigma_i|^2_{(h_\mathbf{Y})^m} dV_{d_{\mathbf{Y}}} < \infty, ~ i=1, 2.$$
Both $\sigma_1$ and $\sigma_2$ extend uniquely to pluricanonical sections $\sigma'_1$ and $\sigma'_2$ on $\mathcal{X}_0$. Then we consider the meromorphic function 
$$f = \frac{\sigma_2}{\sigma_1} ~\textnormal{on} ~ \mathbf{Y},  ~~~  f' =\frac{\sigma'_2}{\sigma'_1}  ~\textnormal{on} ~\mathcal{X}_0 . $$
Then $f = \Phi^* (f')= f'(\Phi)$. $f$ and $f'$ are holomorphic near $q_1$, $q_2$ and $\Phi(q_1)$, $\Phi(q_2)$ respectively and 
$$ f(q_1) \leq 1/2, ~ f(q_2) \geq 2, $$ Therefore $f'(\Phi(q_1)) \neq f'(\Phi(q_2))$ and so $$ \Phi(q_1) \neq \Phi(q_2).$$

\end{proof}

\begin{corollary} $\mathbf{Y}$ is homeomorphic to $\mathcal{X}_0\setminus \textnormal{LCS} (\mathcal{X}_0). $

\end{corollary}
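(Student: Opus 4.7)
The plan is to upgrade the continuous bijection $\Phi: \mathbf{Y} \to \mathcal{X}_0 \setminus \textnormal{LCS}(\mathcal{X}_0)$ established in the two preceding lemmas into a homeomorphism by proving that $\Phi$ is proper. Both the source and target are locally compact Hausdorff (the target is open in a projective variety, the source is a pointed Gromov-Hausdorff limit of non-collapsed manifolds), so a continuous proper bijection is automatically closed, and hence a homeomorphism onto its image. This reduces the task to a single distance/compactness estimate.

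To establish properness, fix a base point $p \in \mathcal{R}_{\mathcal{X}_0}$ and let $K \Subset \mathcal{X}_0 \setminus \textnormal{LCS}(\mathcal{X}_0)$. The goal is to show that $\Phi^{-1}(K) \subset \mathbf{Y}$ is bounded in $d_\mathbf{Y}$ from $p$; combined with closedness of $\Phi^{-1}(K)$ (pulled back from closedness of $K$ in the open quasi-projective variety), Gromov precompactness then yields compactness. On the dense regular part $K \cap \mathcal{R}_{\mathcal{X}_0}$, where $\Phi$ restricts to the identity, the required uniform bound is exactly Lemma \ref{fest}.

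The only issue is a point $q_\infty \in K$ that is singular in $\mathcal{X}_0$ but not in $\textnormal{LCS}(\mathcal{X}_0)$: Lemma \ref{fest} does not directly cover such log terminal singular points. The plan is to approximate $q_\infty$ by a sequence $q_n \in \mathcal{R}_{\mathcal{X}_0}$ converging to $q_\infty$ in $(\mathcal{X}_0, \chi_0)$. Since $K$ is compactly contained in $\mathcal{X}_0 \setminus \textnormal{LCS}(\mathcal{X}_0)$, we may arrange that $\{q_n\}$ lies in a slightly enlarged compact set $K' \Subset \mathcal{X}_0 \setminus \textnormal{LCS}(\mathcal{X}_0)$, so Lemma \ref{fest} gives $d_\mathbf{Y}(p, q_n) \leq C_{K'}$ uniformly. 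By Gromov precompactness for $(\mathbf{Y}, d_\mathbf{Y})$, which is available thanks to the uniform non-collapsing of Lemma \ref{noncol} and the Ricci lower bound $\ric \geq -1$ passing to the limit, a subsequence $q_{n_k}$ converges in $\mathbf{Y}$ to some $\tilde q$ with $d_\mathbf{Y}(p, \tilde q) \leq C_{K'}$. Continuity of $\Phi$ forces $\Phi(\tilde q) = q_\infty$, and injectivity of $\Phi$ identifies $\tilde q = \Phi^{-1}(q_\infty)$, yielding the desired bound.

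The main technical point is precisely this extension of the distance bound from $\mathcal{R}_{\mathcal{X}_0}$ to all of $\mathcal{X}_0 \setminus \textnormal{LCS}(\mathcal{X}_0)$, i.e., controlling the log terminal singular points in $\mathcal{S}_{\mathcal{X}_0} \setminus \textnormal{LCS}(\mathcal{X}_0)$. Once this is in hand, the argument assembles cleanly: $\Phi^{-1}(K)$ is closed and bounded in $\mathbf{Y}$, hence compact; $\Phi$ is proper; the continuous bijection $\Phi$ is closed, so $\Phi^{-1}$ is continuous, completing the homeomorphism. Note that as a byproduct, the restriction $\Phi|_{\mathcal{R}_\mathbf{Y}}: \mathcal{R}_\mathbf{Y} \to \mathcal{R}_{\mathcal{X}_0}$ is the identity biholomorphism by Lemma \ref{regularid}, so the homeomorphism is in fact a biholomorphism of the regular parts, consistent with statement (3) of Theorem \ref{main2}.
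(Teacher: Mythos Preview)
Your proposal is correct and follows essentially the same route as the paper: both arguments reduce the homeomorphism to the distance bound of Lemma \ref{fest}, invoke compactness of bounded balls in $(\mathbf{Y}, d_{\mathbf{Y}})$, and use continuity plus injectivity of $\Phi$ to identify subsequential limits. The only cosmetic difference is packaging: you phrase the argument as properness of $\Phi$ (hence closedness, hence homeomorphism), while the paper directly verifies sequential continuity of $\Phi^{-1}$; your treatment of the log terminal singular points via approximation by regular points is in fact slightly more careful than the paper's, which applies Lemma \ref{fest} to the $x_j$ without explicitly noting that the lemma is stated only for points of $\mathcal{R}_{\mathcal{X}_0}$.
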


\begin{proof} It suffices to show that $\Phi^{-1}: \mathcal{X}_0\setminus \textnormal{LCS}(\mathcal{X}_0) \rightarrow \mathbf{Y}$ is continuous. Let $\{ x_j\}_{j=1}^\infty$ a sequence of points in a fixed component of $\mathcal{X}_0\setminus \textnormal{LCS}(\mathcal{X}_0) $ such that $x_j \rightarrow x_\infty \in \mathcal{X}_0\setminus \textnormal{LCS}(\mathcal{X}_0) $ with respect to $\chi_0$. By Lemma \ref{fest}, there exists a base point $p \in \mathcal{R}_\mathbf{Y}$ and $C>0$ such that for all $j$, 
$$d_\mathbf{Y}(p, x_j) \leq C. $$
Let $y_{j_l}$ be any convergent subsequence of $y_j = \Phi^{-1}(x_j)$ in $(\mathbf{Y}, d_\mathbf{Y})$ and let $y_\infty$ be the limit point. Then by continuity of $\Phi$, we have
$$ x_\infty = \lim_{l_j \rightarrow \infty} x_{lj} = \lim_{l_j \rightarrow \infty} \Phi(y_{l_j}) = \Phi(y_\infty). $$
Therefore $y_\infty$ is uniquely determined and $y_\infty = \Phi^{-1}(x_\infty)$.

\end{proof}

We have now completed the proof of Theorem \ref{main2} by combining all the previous results we obtained. Theorem \ref{main3} immediately follows from Theorem \ref{main2} by using the algebraic result of Hacon and Xu \cite{HX}.

\bigskip

\noindent {\bf{Acknowledgements:}} The author would like to thank Xiaowei Wang for many stimulating conversations.

\bigskip

\footnotesize

\end{document}